\newcommand{\bea}{\begin{eqnarray}}
\newcommand{\eea}{\end{eqnarray}}
\newcommand{\be}{\begin{equation}}
\newcommand{\ee}{\end{equation}}
\newcommand{\ba}{\begin{aligned}}
\newcommand{\ea}{\end{aligned}}
\numberwithin{equation}{section}
\newcommand\cB{\mathcal B}
\newcommand\cC{\mathcal C}
\newcommand\cZ{\mathcal Z}
\newcommand{\rB}{\mathrm B}
\newcommand\sC{\mathscr C}
\newcommand\sD{\mathscr D}
\newcommand{\Aut}{\mathscr{A}ut}
\newcommand{\Z}{\mathbb{Z}}
\newcommand{\cVect}{\mathbf{3Vect}}
\newcommand{\bVect}{\mathbf{2Vect}}
\newcommand{\Mod}{\mathbf{Mod}}
\newcommand{\TY}{\mathbf{TY}}
\newcommand{\Bimod}{\mathbf{Bimod}}
\newcommand{\Pic}{\mathscr{P}ic}
\newcommand{\BrPic}{\mathscr{B}r\mathscr{P}ic}
\newcommand{\Witt}{\mathcal{W}itt}
\theoremstyle{definition}
\newtheorem{Definition}{Definition}[subsection]
\theoremstyle{plain}
\newtheorem{Theorem}[Definition]{Theorem}
\theoremstyle{plain}
\theoremstyle{plain}
\newtheorem{Proposition}[Definition]{Proposition}
\theoremstyle{plain}
\newtheorem{Lemma}[Definition]{Lemma}
\theoremstyle{plain}
\newtheorem{Corollary}[Definition]{Corollary}
\theoremstyle{plain}
\newtheorem{Conjecture}[Definition]{Conjecture}
\theoremstyle{plain}
\theoremstyle{plain}
\theoremstyle{definition}
\theoremstyle{definition}
\newtheorem{Example}[Definition]{Example}
\theoremstyle{remark}
\newtheorem{Remark}[Definition]{Remark}
\theoremstyle{plain}
\newtheorem*{notation*}{Notation}
\newtheorem*{Assumption*}{Assumption}
\title{\huge
Fusion 3-Categories for Duality Defects\\
}
 \author{Lakshya Bhardwaj$^1$, Thibault Décoppet$^2$, Sakura Sch\"afer-Nameki$^1$, Matthew Yu$^1$\\
 \smallskip\\
 {\it $^1$Mathematical Institute, University of Oxford, OX2 6GG, Oxford, UK }\\
  {\it $^2$Department of Mathematics, Harvard University, Cambridge, MA 02138, USA }\\
 }
 \date{}
\begin{document}

\maketitle
    \hspace{1cm}
    \begin{abstract}
    \noindent
We study the fusion 3-categorical symmetries for quantum theories in (3+1)-dimensions with  self-duality defects. Such defects have been realized physically by half-space gauging in theories with one-form symmetries $A[1]$ for an abelian group $A$, and have found applications in the continuum and the lattice. These fusion 3-categories will be called (generalized) Tambara-Yamagami fusion 3-categories ({\bf 3TY}), in analogy to the TY fusion 1-categories.
    We consider the Brauer-Picard and Picard 4-groupoids to construct these categories using a 3-categorical version of the extension theory introduced by Etingof, Nikshych and Ostrik. 
    These two 4-groupoids correspond to looking at the construction of duality defects either directly from the 4d point of view, or from the point of view of the 5d Symmetry Topological Field Theory (SymTFT). 
    At this categorical level, the Witt group of non-degenerate braided fusion 1-categories naturally appears in the aforementioned 4-groupoids and represents enrichments of standard duality defects by (2+1)d TFTs. 
    Our main objective is to study graded extensions of the fusion 3-category $\mathbf{3Vect}(A[1])$ for some finite abelian group $A$, which is the symmetry category associated to a (3+1)d theory with 1-form symmetry $A$. Firstly, we do so explicitly using invertible bimodule 3-categories and the Brauer-Picard 4-groupoid. Secondly, we use that the Brauer-Picard 4-groupoid of $\mathbf{3Vect}(A[1])$ can be identified with the Picard 4-groupoid of its Drinfeld center. Moreover, the Drinfeld center of $\mathbf{3Vect}(A[1])$, which represents topological defects of the SymTFT, is completely described by a sylleptic strongly fusion 2-category formed by topological surface defects of the SymTFT. These are classified by a finite abelian group equipped with an alternating 2-form. We relate the Picard 4-groupoid of the corresponding braided fusion 3-categories with a generalized Witt group constructed from certain graded braided fusion 1-categories using a twisted Deligne tensor product. In tractable examples, we are able to carry out explicit computations so as to understand the categorical structure of the $\mathbb{Z}/2$ and $\mathbb{Z}/4$ graded {\bf 3TY} categories.
    \end{abstract}

    \newpage

\tableofcontents

\section{Introduction}

The study of higher-form and categorical symmetries
 has greatly impacted our understanding of quantum theories. For reviews of generalized symmetries see  \cite{Schafer-Nameki:2023jdn, Bhardwaj:2023kri,Shao:2023gho, Brennan:2023mmt, 
 Luo:2023ive, Gomes:2023ahz}. In particular, fusion 1-categories have  found many applications in describing the symmetries in (1+1)d theories \cite{Bhardwaj:2017xup,Chang:2018iay,Thorngren:2019iar, Yu:2020twi,Thorngren:2021yso,inamura2021topological,Huang:2021zvu,Inamura:2022lun,Chatterjee:2022jll,Zhang:2023wlu,Huang:2023pyk,Bhardwaj:2023fca,Bhardwaj:2023idu,Bhardwaj:2023bbf,Bhardwaj:2024qrf, Bhardwaj:2024kvy,Bhardwaj:2024ydc,Bhardwaj:2024wlr, Chatterjee:2024ych} and fusion 2-categories for applications to (2+1)d theories
\cite{Roumpedakis:2022aik,Bhardwaj:2022yxj,Bhardwaj:2022lsg,Bartsch:2022mpm,Bhardwaj:2022kot,Bhardwaj:2022maz,Bartsch:2022ytj,Delcamp:2023kew,Inamura:2023qzl,DY23,Bhardwaj:2023fca,Choi:2024rjm,Bhardwaj:2024qiv}, and (3+1)d topological theories \cite{lan2018classification,Kong:2020wmn,KW:2020,KW2:2020jne,Barkeshli:2022wuz,Zhao:2022yaw}.

Going to higher dimensions and working with higher-dimensional operators leads to many technical complications on the categorical side. One quickly realizes that there is a proliferation of operators which themselves can host theories described by a fusion 1-category. 
However, developments in physics have pushed progress to (3+1)d and fusion 3-categories with numerous concrete realizations  and implications \cite{Choi:2021kmx, Kaidi:2021xfk,Bhardwaj:2022yxj,Choi:2022zal, Bhardwaj:2022lsg,Antinucci:2023ezl,Cordova:2023bja, Antinucci:2024ltv}. 
A particularly simple class of examples of higher categories are given by the categories describing (3+1)d theories with a non-invertible duality symmetry \cite{Choi:2021kmx,Kaidi:2021xfk}. These have also been realized on the lattice in \cite{Koide:2021zxj, Gorantla:2024ocs}.
These duality symmetries in (3+1)d generalize the Kramers-Wannier duality in (1+1)d for the critical Ising CFT. 
Constructing fusion 1-categories exhibiting a duality symmetry in (1+1)d can be done using extension theory as developed in \cite{ENO2}. The resulting categories are the so-called Tambara-Yamagami (TY) categories \cite{TY}. 

The goal of this paper is to explore fusion 3-categories that characterize such duality symmetries, developing the mathematical framework. 
We exploit extension theory for fusion 3-categories with the goal to study the categories behind the $\Z/2$ and $\Z/4$ duality symmetry for (3+1)d theories with a finite one-form symmetry studied in \cite{Kaidi:2021xfk,Kaidi:2022cpf,Antinucci:2023ezl,Cordova:2023bja}. The most elementary example of such theories include the free $U(1)$ gauge theory and 
$\mathcal{N}=4$ super Yang-Mills \cite{Bashmakov:2022jtl,KZZ:2022uux,Antinucci:2022cdi,Lawrie:2023tdz,Sela:2024okz}, where the self-duality is obtained by gauging a 1-form symmetry. 
We are able to unpack more of the categorical structure for symmetries of (3+1)d theories with a duality than what has previously appeared in the literature. In addition, we also compute the fusion rules of objects for the corresponding fusion 3-categories without invoking background fields. 

The ability to stack 3-dimensional operators with topological theories enriched with a symmetry makes studying self-duality defects rich in (3+1)d and we explore the effects in detail. In order to keep track of the different theories up to equivalence, we need to introduce a new generalized Witt group of graded braided fusion 1-categories with twisted product. We expect that these generalized Witt groups may find applications in the study of symmetry enriched topological theories in (2+1)d \cite{Barkeshli:2014cna}, where the symmetry provides the grading. There have also been applications where the symmetry enrichment is due to a continuous group \cite{Cheng:2022nds}.

Finally, motivated physically by the prospect of understanding the (3+1)d theory, the natural category to consider is the (Drinfeld) center of the graded fusion 3-category, i.e. the Symmetry Topological Field Theory (SymTFT) for the boundary symmetry category \cite{Ji:2019jhk,Apruzzi:2021nmk,Freed:2022qnc}, which as shown in \cite{Bhardwaj:2023ayw} encodes not only the symmetry but also the generalized charges \cite{Lin:2022dhv, Bhardwaj:2023wzd, Bartsch:2023pzl}.
As an intermediary step towards this 3-category, we define crossed braided fusion 3-categories and explain how they can be constructed via extension theory using the Picard 4-groupoid, which is described by the generalized Witt group. Understanding its structure and, in particular, its generators is essential for building these new 3-categories.
The center itself is then the equivariantization of the crossed braided 3-category. This last step is challenging to perform, and we thus leave it for future work.

\subsubsection*{Mathematical Content}

In order to construct new fusion 1-categories out of the ones that are already known, extension theory, as originally introduced in \cite{ENO2}, is one of the most useful methods. For instance, it can be used to give a conceptual construction of the so-called Tambara-Yamagami categories \cite{TY}, which are $\mathbb{Z}/2$-graded extensions of the pointed fusion 1-categories $\mathbf{Vect}(A)$ for some finite abelian group $A$ by $\mathbf{Vect}$. Extension theory can be generalized to higher fusion categories:\ The case of fusion 2-categories was dealt with in \cite{D11} using an argument from \cite{JFR3} that applies to all higher fusion categories simultaneously. Bypassing extension theory, fusion 2-categories with a Tambara-Yagami defect, that is $\mathbb{Z}/2$-graded fusion 2-categories with non-trivially graded component given by $\mathbf{2Vect}$, were classified in \cite{DY23} using the fact the such fusion 2-categories are automatically group-theoretical. In the case of fusion 3-categories however, the use of extension theory is unavoidable.

As is familiar from the theory of fusion 1-categories, in order to study extensions of a fusion 3-category, one needs to understand the associated Brauer-Picard 4-groupoid of invertible bimodule 3-categories. We will focus on the fusion 3-category $\mathbf{3Vect}(A[1])$ associated to a finite abelian group $A$. This fusion 3-category is connected so that its structure is completely determined by the associated braided fusion 2-category $\Omega\mathbf{3Vect}(A[1]) = \mathbf{2Vect}(A[0])$ of $A$-graded 2-vector spaces. Said differently, we have $\mathbf{3Vect}(A[1])\simeq \mathbf{Mod}(\mathbf{2Vect}(A[0]))$. We are able to carry out some explicit computations using this perspective. In particular, with $A=\mathbb{Z}/n$, the invertible $\mathbf{3Vect}(A[1])$-bimodule 3-categories built from $\mathbf{3Vect}$ yield elements of order $4$ in the Brauer-Picard group if $n>2$ and of order $2$ if $n=2$. This means that, for $A=\mathbb{Z}/n$ with $n>2$, the corresponding extensions of $\mathbf{3Vect}(A[1])$ by $\mathbf{3Vect}$ are graded by $\mathbb{Z}/4$. This last fact is well-known in the Physics literature \cite{Kaidi:2022cpf}. We will refer to these categories as extended Tambara-Yamagami 3-categories, highlighting the fact that they are $\Z/4$-graded.

It is also possible to take a different perspective. Namely, one can leverage the fact that the Brauer-Picard 4-groupoid is equivalent to the Picard 4-groupoid, that is the space of invertible module 3-categories of the Drinfeld center. Now, the Drinfeld center of $\mathbf{3Vect}(A[1])$ is a connected braided fusion 2-category. Specifically, it is of the form $\mathbf{Mod}(\mathfrak{S})$ for the nondegenerate sylleptic strongly fusion 2-category $\mathfrak{S}$ corresponding to the group $A\oplus \widehat{A}$ equipped with the canonical alternating 2-form. Slightly more generally, recall from \cite{JFY2} that, given a finite abelian group $B$ equipped with an alternating 2-form $s$, there is a corresponding sylleptic strongly fusion 2-category $\mathbf{2Vect}^s(B[0])$, and an associated braided fusion 3-category $\mathbf{Mod}(\mathbf{2Vect}^s(B[0]))$. We show that the Picard 4-groupoid of the fusion 3-category $\mathbf{Mod}(\mathbf{2Vect}^s(B[0]))$ is a variant of the Witt group of non-degenerate braided fusion 1-categories from \cite{DMNO}. More precisely, the Witt group is precisely the Picard 4-groupoid of $\mathbf{3Vect}$ by \cite{BJSS} as non-degeneracy of a braided fusion 1-category is a higher categorical invertibility condition. We identify the Picard 4-groupoid of $\mathbf{Mod}(\mathbf{2Vect}^s(B[0]))$ with a generalized Witt group constructed from suitably invertible $B$-graded braided fusion 1-categories with product given by the Deligne tensor product twisted by $s$. We also consider the subgroup generated by (suitably invertible) graded braided fusion 1-categories that are pointed. This subgroup is more amenable to calculations, and we carry out some explicit computations in the case $B=\mathbb{Z}/2\oplus\mathbb{Z}/2$ with non-trivial alternating 2-form, which can be identified with the Picard group of the Drinfeld center of $\mathbf{3Vect}(\mathbb{Z}/2[1])$.

Finally, we obtain a 3-categorical version of another classical result from \cite{ENO2}. Namely, we introduce crossed braided fusion 3-categories, and show that such fusion 3-categories are classified by maps into the Picard 4-groupoid of the braided fusion 3-category corresponding to the trivially graded factor. But, as the Brauer-Picard 4-groupoid is equivalent to the Picard 4-groupoid of the Drinfeld center, any group-graded extension of a fusion 3-category determines a crossed braided extension of its Drinfeld. Categorifying a result from \cite{GNN}, we assert that the equivariantization of this crossed braided fusion 3-category coincides with the Drinfeld center of the group-graded fusion 3-category. Using our previous computations in the generalized Witt group associated to $\mathbb{Z}/2\oplus\mathbb{Z}/2$ with non-trivial alternating 2-form, we partially describe $\mathbb{Z}/2$-crossed braided extensions of the Drinfeld center of $\mathbf{3Vect}(\mathbb{Z}/2[1])$.

\subsubsection*{Structure}

In section \ref{section:physics}, we give physical interpretations of all the mathematical tools that we will work with. In section \ref{section:highercats}, we recall some higher categorical notions, especially concerning higher Morita categories, as well as useful facts about fusion 2-categories that will be used throughout. In section \ref{section:extensiontheory}, we spell out extension theory for fusion 3-categories using the Brauer-Picard 4-groupoid. We include examples of $\Z/2$- and $\Z/4$-graded extensions exhibiting duality defects. In section \ref{section:crossedbraided}, we study the Picard space of a braided fusion 3-category and define crossed braided fusion 3-categories. Associated to a finite abelian group equipped with an alternating 2-form, we introduce a generalized Witt group, which agrees with the Picard group of the corresponding braided fusion 3-category. We perform some explicit computations for the group $\mathbb{Z}/2\oplus\mathbb{Z}/2$ equipped with the non-trivial alternating 2-form. Finally, in section \ref{section:outlook}, we explain some of the computational challenges that we are facing when working with fusion 3-categories.

\subsection*{Acknowledgments}
It is a pleasure to thank Theo Johnson-Freyd, Dmitri Nikshych, and Xingyang Yu for helpful conversations and comments on this paper. The author L.B. is funded as a Royal Society University Research Fellow through grant URF{\textbackslash}R1\textbackslash231467. The author T.D. is sponsored by the Simons Collaboration on Global Categorical Symmetries.
The authors S.S.-N. and M.Y. are supported by the EPSRC Open Fellowship EP/X01276X/1.
No authors have competing interests to declare that are relevant to the content of this article. There is no data available for this article to declare.

\section{Physical Context and Motivation}\label{section:physics}

\subsection{Physical Context for Tambara Yamagami 3-Categories}\label{subsection:3TYphysics}

Before diving into the mathematical analysis, we outline the physics framework for the mathematical structures that will be considered. The setting is that of 
a (3+1)d theory $\mathcal{T}$ with a one-form symmetry $A[1]$ given by the fusion 3-category $\mathscr{C} = \cVect({A[1]})$. The brackets denote the degree in which the group lives and gives the ``form number" of the symmetry that the group describes. That is, for any non-negative integer $n$ and with $A$ an abelian group, if $n \geq 1$, we use $A[n]$ to denote the higher group whose classifying space is the
Eilenberg-MacLane space $K(A,n+1)$. We will sometimes drop the brackets for the group when the degree is 0, i.e.\ it describes a 0-form symmetry. When taking the group cohomology for a (higher) group, we always mean the cohomology of its classifying space. 

The theory $\mathcal{T}$ is self-dual if the gauged theory $\mathcal{T}/A[1]$ is isomorphic to $\mathcal{T}$, and the isomorphism is symmetric with respect to gauging. We aim to classify the self-dual symmetries of $\mathcal{T}$. This can be  studied  directly from either the perspective of the QFT $\mathcal{T}$, where we study categorical extensions of $\mathscr{C}$, or from the SymTFT perspective. From the latter perspective, which we will refer to as the bulk point of view,  the starting point is a (4+1)d topological theory on an interval. One of the boundaries is gapped, and hosts the symmetry category $\mathscr{C}$. Categorically speaking, the topological defects of the SymTFT are given by $\cZ(\mathscr{C})$, the Drinfeld center of $\mathscr{C}$.  For our purposes, we can realize the SymTFT as a Dijkgraaf-Witten (DW) theory for the 1-form symmetry, 
whose action  is given by 
\be
S = \frac{1}{n}\int b^{(2)}\delta c^{(2)}\,,
\ee
where $n$ is the order of $A= \Z/n$. Here $b^{(2)}$ and $c^{(2)}$ are two dynamical gauge fields for $A[1]$. There is a global symmetry $G$ that performs the interchange 
\be
G:\qquad b^{(2)}\to c^{(2)}\,,\qquad c^{(2)} \to -b^{(2)} \,,
\ee
When the group $A= \Z/2$ then the global symmetry is given by $\Z/2$ and when $n >2$ then the global symmetry is given by $\Z/4$.
We will study the self-dual theories, whose full symmetry is described by an {\bf extended Tambara-Yamagami fusion 3-category}. Such a category is described from the mathematical point of view as arising from $G$-graded extensions of $\mathscr{C}$. On the side of the bulk, extension theory allows us to consider a 3-category that is intimately related to the centers of the $G$-graded extensions of $\mathscr{C}$.
The passage between the boundary and bulk
is summarized by the square in Figure \ref{fig:gauging}.

\begin{figure}
$$
\begin{tikzpicture}
\draw [->] (0,2) -- (4,2);  
\draw[->] (-0.3,1.7) -- (-0.3,0.3) ; 
\draw[->] (4.5,1.7) -- (4.5,0.3) ; 
\draw [->] (0,0) -- (1.9,0); 
\draw [->] (2.5,0) -- (4,0); 
\node[left] at (0,2) {$\sC$}; 
\node[right] at (4,2) {$\ \ \sD$}; 
\node[right] at (4,0) {$\mathcal{Z}(\sD)$}; 
\node[left] at (0,0) {${\mathcal{Z}(\sC)}$} ;
\node at (2.2,0) {${\mathscr{B}}$};
\node[left] at (-0.3,1) {\footnotesize{Center}};
\node[right] at (4.5,1) {\footnotesize{Center}};
\node[above] at (2,2) {\footnotesize{Extension}};
\node[above] at (1,0) {\footnotesize{Extension}};
\node[above] at (3.2,0) {\footnotesize{Equiv.}};
\end{tikzpicture}
$$
\caption{Mathematical Implementation of Gauging of the SymTFT: $\mathscr{C}$ is the fusion 3-category e.g. $ \cVect({A[1]})$. Incorporating the duality symmetry (``Extension'') results in an extended Tambara-Yamagami (TY) fusion 3-category $\mathscr{D}=\mathbf{3TY}(\cVect({A[1]}))$. Alternatively we can consider the SymTFT, and the associated topological defects given by the Drinfeld center $\mathcal{Z} (\mathscr{C})$ of $\mathscr{C}$. Including and then gauging the 0-form symmetry of the center (``Extension" and ``Equivariantization") results in the center $\mathcal{Z}(\mathscr{D})$ of $\mathscr{D}$. }
\label{fig:gauging}
\end{figure}

The description of gauging on the side of the bulk, that is, for the SymTFT, is more involved, and may be summarized as follows:
\begin{itemize}
    \item We begin with the 5d SymTFT given by the Dijkgraaf-Witten theory for $A[1]$, together with a non-anomalous action of a group $G$. 
    \item We gauge the $G$ symmetry to produce the SymTFT of another boundary theory. The gauging is mathematically described by a two step process that involves an \textit{extension} step followed by an \textit{equivariantization} step. 
    \item The data that is needed for the extension step in the SymTFT is the same data that one needs for the extension on the boundary. As for the equivariantization step, it requires no additional data. Further, the diagram in Figure \ref{fig:gauging} commutes.
\end{itemize}

Extensions of the (4+1)d SymTFT graded by the finite group $G$ are classified by homotopy classes of maps from $\mathrm{B}G$ into the Picard groupoid $\mathrm{B}\Pic(\mathcal{Z}(\sC))$. But, there is a canonical map $\mathrm{B}\Pic(\mathcal{Z}(\sC))\rightarrow \mathrm{B}\mathscr{A}ut^{br}(\cZ(\mathscr{C}))$ to the braided automorphism of the center of $\mathscr{C}$. Therefore, any extension gives rise to \be\label{rhoGAut}
\rho: \quad G \rightarrow \Aut^{{br}}(\cZ(\mathscr C))\,,
\ee
which describes the action of a $G$ 0-form symmetry of the SymTFT $\mathfrak{Z}(\mathscr{C})$ on its topological defects forming the center $\mathcal{Z}(\mathscr{C})$. One of the major difficulties that we have faced is that, unlike in the case of fusion 1-categories, the map $\mathrm{B}\Pic(\mathcal{Z}(\sC))\rightarrow \mathrm{B}\Aut^{br}(\cZ(\mathscr{C}))$ is not an equivalence on low degree homotopy groups. Consequently, it is much more difficult to understand $\Pic(\mathcal{Z}(\sC))$. 

The global symmetry of the bulk needs to extend consistently to the boundary of the SymTFT; from this point of view, the SymTFT provides the order of the duality symmetry that one should expect on the boundary after gauging. Mathematically, this fact is captured by the abstract equivalence of $\mathrm{B}\Pic(\cZ(\sC))$ and $\mathrm{B}\BrPic(\sC)$, where we think of the latter groupoid as parametrizing the codimension-1 topological defects on the boundary attached to codimension-1 topological defects of the bulk SymTFT. Even though there is an equivalence, explicitly describing this equivalence is challenging. $\BrPic(\sC)$ is the so-called Brauer-Picard space -- these are the invertible $\mathscr{C}$-bimodule 3-catgories. More precisely, $G$-graded extensions of $\sC$ are classified by homotopy classes of maps from $\mathrm{B}G$ into the Brauer-Picard 4-groupoid $\mathrm{B}\BrPic(\sC)$. This will be discussed in section \ref{section:extensiontheory}.

For the purposes of studying extensions and therefore duality symmetries for the boundary theory using the SymTFT, it is crucial that we understand the Picard 4-groupoid of $\mathcal{Z}(\mathscr{C})$, and we can use what is known about topological theories in $(4+1)$d to make this 4-groupoid explicit in the cases that are most physically relevant. 

We now describe some properties we expect of the 2-category that describes the surface defects in the $(4+1)$d SymTFT. 
Being in $(4+1)$d the 2-category is naturally braided, in fact sylleptic. The term syllepsis is less familiar, but  essentially corresponds to the linking between topological defects in the (4+1)d SymTFT. Let us review this in the context of a DW-theory, which is the SymTFT for a 1-form symmetry in (3+1)d. The linking of two surfaces $\mathrm{link}(M_2,M'_2)$ constructed by $b^{(2)}$ and $c^{(2)}$ respectively has the property that it is antisymmetric
\begin{equation}
    \mathrm{link}(M_2,M'_2) = -\mathrm{link}(M'_2,M_2)\,.
\end{equation}
Braiding two surfaces results in 
\begin{equation}
    M_2 M'_2 = e^{\left({ 2\pi i/n\, \mathrm{link}(M_2,M'_2)}\right)} M'_2 M_2\,,
\end{equation}
where the phase associated to the linking will be called \textit{syllepsis}, and later on be denoted by $s$. Here we have specified the phase for $A=
\mathbb{Z}/n$. This is extra structure that is equipped onto our 2-category, enhancing the braiding. Let us write $\mathbf{2Vect}^s(A\oplus A)$ for the fusion 2-category of $A\oplus A$-graded 2-vector spaces with syllepsis $s$. The braided 3-category $\mathcal{Z}({\sC})$ describing the SymTFT can then be identified with $\Mod(\mathbf{2Vect}^s(A\oplus A))$, the braided 3-category of finite semisimple modules for $\mathbf{2Vect}^s(A\oplus A)$. The fact that the SymTFT arises in this way implies that the 4-dimensional operators of the DW-theory which implement the global $G$-symmetry are condensation defects and therefore can end on a 3-dimensional twist defects in the bulk. 

The twist defects are captured by the space $\mathscr{P}ic(\mathcal{Z}(\sC))=\mathscr{P}ic(\mathbf{Mod}(\mathbf{2Vect}^s(A\oplus A)))$, which we will unpack in more mathematical detail in section section \ref{section:crossedbraided}. A new feature that does not appear in lower dimensions is the ability to stack on 3d TFTs to the submanifold where the twist defect lies. 
Such 3d TFTs are classified by non-degenerate braided fusion categories. It is useful to consider these up to centers, because centers can be condensed away to the trivial TQFT. The Witt group introduced in \cite{DMNO} describes the structure of non-degenerate braided fusion categories up to Drinfeld centers. Physically speaking, any two non-degenerate braided
fusion 1-categories with distinct Witt classes in the Witt group are not related by anyon condensation, and give rise to twist defects not related by gauging. For our present purposes, we need to consider a generalized Witt group, which classifies $s$-invertible $A\oplus A$-graded fusion 1-categories up to a certain equivalence relation. 
Namely, the generalized Witt group $\Witt(A\oplus A,s)$
can be used to model $Pic(\mathbf{Mod}(\mathbf{2Vect}^s(A\oplus A)))$. 

\paragraph{{\bf 3TY} and Generalized {\bf 3TY} Categories.}
    We mainly address extension theory for fusion 3-categories and fusion 1-categories in this paper, with the most prominent examples being  the Tambara-Yamagami (TY) fusion 3-category, 3TY and the Tambara-Yamagami fusion 1-category. The way 3TY categories are defined parallels the definition that is familiar in the context of 1-categories. More  precisely, {\bf 3TY} categories are $\mathbb{Z}/2$ graded and $\mathbb{Z}/4$ graded 3-categories will be referred to as generalized {\bf 3TY} categories \footnote{1TY categories are exclusively $\Z/2$-graded by definition}. The trivially graded component has 1-form symmetry $A[1]$, and a non-trivially graded component with an object that enacts the duality symmetry.  
    It is also possible to realize 2TY, as the symmetry for (2+1)d theories and this was done in \cite{DY23}. The techniques used there were much different than the ones presented here. More precisely, it was shown that all 2TY categories are group theoretical, thereby providing a lot of leverage in their study. It is well-known that not all TY categories are group-theoretical \cite{GNN}. Likewise, it seems overwhelmingly likely that not all 3TY categories are group-theoretical.

\subsection{$\Z/2$-graded Extension and TY Fusion 1-categories}
While our main goal is to develop extension theory to construct $\Z/2$ and $\Z/4$-graded 3-categories, we find it instructive to review how extension theory is used to construct Tambara-Yamagami (TY) fusion 1-categories. In particular, we review the Brauer-Picard 2-groupoid and how it makes the data needed to classify TY fusion-categories apparent. When we tackle the 3-category case, we will be more easily able to give the data of TY fusion 3-categories. 

A Tambara-Yamagami fusion 1-category initially introduced in \cite{TY} and denoted by $\TY(A,\chi, \tau)$ is a $\Z/2$-graded extension of $\mathbf{Vect}(A[0])$ by $\mathbf{Vect}$. We therefore  need to understand maps from $\rB\mathbb{Z}/2$
to $\rB\mathscr{B}r\mathscr{P}ic(\mathbf{Vect}(A[0]))$, the Brauer-Picard space of $\mathbf{Vec}(A[0])$. This space $\BrPic(\mathbf{Vect}(A[0]))$ has homotopy groups given by:
\be 
\begin{tabular}{|c|c|c|c|c|c|}
\hline
$\pi_0$ & $\pi_1$ & $\pi_2$ \\
\hline \\[-1em]
$BrPic(\mathbf{Vect}(A[0]))$ & $A\oplus \widehat{A}$  & $\mathbb{C}^{\times}$\\
\hline
\end{tabular}
\ee
where $BrPic(\mathbf{Vect}(A[0]))$ is the group of invertible bimodules for $\mathbf{Vect}(A[0])$ and $\widehat{A}$ is the Pontryagin dual group to $A$. We note that the maps from $\Z/2$ to $BrPic(\mathbf{Vect}(A[0]))$ that produce a Tambara-Yamagami fusion 1-category are determined by an invertible $\mathbf{Vect}(A[0])$-bimodule structure on $\mathbf{Vect}$. Since $BrPic(\mathbf{Vect}(A[0])) \cong \Aut^{br}(\mathcal{Z}(\mathbf{Vect}(A[0])))\cong O(A\oplus \widehat A)$  we explicitly look at maps that pick a $\Z/2$ element from $O(A\oplus \widehat{A})$ which implement the swap action in the DW theory. From the point of view of the boundary theory which now has a self-duality, the duality defect is represented by the simple object in $\mathbf{Vect}$.
It was shown in \cite[Proposition 9.3]{ENO2} that an invertible $\mathbf{Vect}(A[0])$-bimodule category of order two based on $\mathbf{Vect}$ is completely determined by a 
choice of symmetric non-degenerate bicharacter $\chi$ on $A$ that gives an identification $A \cong \widehat{A}$.
Next, we consider how to extend the map $\mathrm{B}\mathbb{Z}/2\rightarrow \mathrm{B}BrPic(\mathbf{Vect}(A[0]))$ to a map $\mathrm{B}\mathbb{Z}/2\rightarrow \mathrm{B}\tau_{\leq 1}\BrPic(\mathbf{Vect}(A[0]))$. Observe that the group $\mathbb{Z}/2$ acts on $\pi_1(\rB\BrPic(\mathbf{Vect}(A[0])))= A \oplus \widehat{A}\cong \mathrm{Inv}(\mathcal{Z}(\mathbf{Vect}(A[0])))$. In the specific case of Tambara-Yamagami, the action is the swap action between the two factors in $A \oplus \widehat{A}$ (the identification between $A$ and $\widehat{A}$ being induced by $\chi$). As explained in detail in \cite{ENO2}, the first obstruction to a tensor structure on the graded category is given by a class in $H^3(\Z/2[0]; \underline{A\oplus \widehat{A}})$, where $\underline{A\oplus \widehat{A}}$ denotes that the coefficients are twisted by the swap action. This group evaluates to zero, which means that there are $H^2(\Z/2[0]; \underline{A\oplus \widehat{A}})$ choices of lifts. Since this latter group is also zero, we discover that there is actually no choice at all. Finally, extending the map $\mathrm{B}\mathbb{Z}/2\rightarrow \mathrm{B}\tau_{\leq 1}\BrPic(\mathbf{Vect}(A[0]))$ to a map $\mathrm{B}\mathbb{Z}/2\rightarrow \mathrm{B}\BrPic(\mathbf{Vect}(A[0]))$ corresponds to an associativity constraint obstruction class in $H^4(\Z/2[0];\mathbb{C}^\times)$, a group that vanishes, so that there are $H^3(\Z/2[0];\mathbb{C}^\times) \cong \Z/2$ choices of lift. This last choices gives the data of $\tau$ that features in the classical description of Tambara-Yamagami 1-categories.

We now consider the centers of TY categories, and, more generally, how the center of a $G$-graded fusion 1-category is related to the center of the trivially graded fusion 1-category. The centers of TY categories has been described in the literature in \cite{Izumi:2001mi,GNN}.
What is canonically referred to as gauging for braided fusion 1-categories starts with a braided fusion 1-category $\mathcal{B}$ and runs through the extension and equivariantization procedure in Figure \ref{fig:gauging} for a group $G$ giving the equivalence classes of braided tensor autoequivalences of $\mathcal{B}$ \cite{CGPW}. The map from $G$ means that the theory described by $\mathcal{B}$ has a $G$ global symmmetry, for which symmetry defects can be introduced. The group ${P}ic(\mathcal{B}) = \pi_0(\Pic(\mathcal{B}))$ of invertible module categories over $\mathcal{B}$ is the mathematical object describing the symmetry defects. We can see this gauging procedure in practice in the (2+1)d SymTFT associated to a (1+1)d theory with $\mathbf{Vect}(\Z/n)$ fusion category symmetry is a $\Z/n$-gauge theory described by $\mathcal{Z}(\mathbf{Vect}(\Z/n))$ with action $S=\frac{1}{n}\int b^{(1)} \delta c^{(1)}$. This theory has a $G= \Z/2$ symmetry taking $b^{(1)}\to c^{(1)}$ and $c^{(1)}\to b^{(1)}$, i.e.\ electromagnetic duality. Gauging the $\Z/2$ in the bulk gives the SymTFT for $\textbf{TY}[\Z/n]$ (see \cite[Section 4.1]{Kaidi:2022cpf}), which is just the center $\mathcal{Z}(\mathbf{TY}[\Z/n])$. Upon restricting to the boundary, i.e.\ forgetting from the center, we get the category $\textbf{TY}[\Z/n]$. Hence, we have an example for which the gauging procedure of $\Z/2$ taking place in the bulk results in the same category as the $\Z/2$-graded extension.
The SymTFTs for $G$-graded fusion 1-categories are $G$-gaugings of SymTFTs for the original fusion 1-category.
In the mathematical parlance, we started off with a fusion 1-category $\mathbf{Vect}({\Z/n})$ of which we took a $\Z/2$-graded extension to get $\mathbf{TY}[\Z/n]$. We naturally have the relative center $\mathcal{Z}_{\mathbf{Vect}(\Z/n)}(\mathbf{TY}[\Z/n])$ which is the category of $\mathbf{Vect}(\Z/n)$-bimodule functors from $\mathbf{Vect}(\Z/n) \to \mathbf{TY}[\Z/n]$. This has a canonical  $\Z/2$-crossed braided category structure \cite[Section 3.1]{GNN}, which according to the second step of the gauging procedure must be equivariantized. The result is equivalent as a braided fusion 1-category to $\mathcal{Z}(\mathbf{TY}[\Z/n])$. The twist defects that were discussed in section \ref{subsection:3TYphysics} for the (4+1)d picture are more easily seen in the (2+1)d SymTFT. 
We take $n=2$ and the symmetry in (1+1)d is $\mathbf{Vect}(\Z/2)$, with SymTFT $\mathcal{Z}(\mathbf{Vect}(\Z/2))$. The surface that implements a $\Z/2$ global symmetry is a condensation surface and can end on a line in the bulk which is a twist defect.
Furthermore it is also possible to twist the gauging by stacking with an invertible phase for the $\Z/2$ symmetry. Gauging the $\Z/2$ global symmetry in the bulk makes the twist defect a genuine line, and the SymTFT becomes
 $\mathcal{Z}(\TY[\Z/2,\chi,\tau])$. In \cite[Section 4]{Kaidi:2022cpf} the authors apply the two step gauging procedure to recover $\mathcal{Z}(\TY(A,\mathrm{triv},\mathrm{triv}))$, where $A=\Z/n$, but attempting to generalize their formalism to higher categories and making manifest all the data that is necessary to include in the gauging step is difficult. We hope that this paper provides a more transparent way to realize the data needed to define higher categories with duality defects, by taking an algebraic perspective.

\subsection{Fusion 3-Categories}
We will now summarize a few salient features of fusion 3-categories from a physical perspective, and also highlight aspects of these that do not appear in lower categories. 
We will always require the categories to be fusion (as opposed to multi-fusion), in particular the identity is a simple object and furthermore all objects are written as direct sums of simple objects. 
Broadly speaking a fusion 3-category characterizes topological defects of dimension 3 (objects) and lower (morphisms), thus acting as a symmetry of an at least (3+1)d QFT. 
As symmetries the objects of the category are associated to 0-form symmetries and the 1-, 2-, 3-morphisms are accordingly 1-, 2-, 3-form symmetries, respectively. For group-like symmetries of this type, the most general structure is a 4-group symmetry. More generally the topological defects can have ``non-invertible" fusion and at each level there is a notion of composition of $i$-morphisms which need not be strict if $i<3$.

Starting with the 1-category of finite dimensional vector spaces $\mathbf{Vect}$, one can consider $\Mod( \mathbf{Vect})$ which we take to be a one point delooping with respect to $\mathbf{Vect}$, as well as condensation completing. Applying this procedure once more gives $\mathbf{3Vect}$ where all the objects are condensation defects.
The important distinction compared to ``lower categories" is that general fusion 3-categories have objects that have nontrivial  (2+1)d TQFTs as endomorphisms \footnote{By nontrivial we mean that there is no anyon condensation to the vacuum.}. 
Furthermore, these categories can have multiple component, where in each component resides the objects that are related by anyon condensation.

To illustrate the previous points, let us consider a theory with a 1-form symmetry $A[1]$ in (3+1)d. This has topological defects that are surface operators $\mathbf{D}_2^{(g)}$, $g\in A$. However, the fusion 3-category $\cVect(A[1])$ has infinitely many three-dimensional topological defects given by $A$-graded fusion 1-categories, and we can consider condensation defects of the 1-form symmetry on any of these. 
This may seem trivial from the perspective of a symmetry acting on a (3+1)d QFT, however, we will see that such defects play an important role in the construction of the 3TY categories.

For many constructions in this work we need to study the module 3-categories associated to rigid algebras in $\cVect(A[1])$, see e.g.\ section \ref{section:highercats} and section \ref{section:extensiontheory}. In the sense of lower categories, these algebras are the condensible operators in our theory, and now should be thought of as categories themselves. For (2+1)d topological theories described by non-degenerate braided fusion categories, the algebras that are of most physical relevance are Lagrangian algebras, which are special connected étale algebras \footnote{A étale algebra in a braided fusion category $\cC$ is a commutative separable algebra. An étale algebra $A$ is connected if $\mathrm{dim} Hom_{\cC}(1,A) =1$}. Rigid and separable algebras in fusion 2-categories have been studied in \cite{D7}, and the physical implications for gauging surface operators is explained in \cite{DY22}.
We will work with fusion 3-categories, and consider module categories with respect to algebras constructed from $A$-graded braided fusion 1-categories. These algebras take the form $\Mod(\cB)$ where $\cB$ is an $A$-graded braided fusion 1-category. Given two $A$-graded braided fusion 1-categories $\cB_1$ and $\cB_2$, one can study the category of boundaries between them.  This is given by $\Bimod_{\mathbf{3Vect}(A[1])}(\Mod(\cB_1),\Mod(\cB_2))$
the 3-category of
the $A$-graded $\cB_1\text{-}\cB_2$-central multifusion 1-categories. By the folding trick, this is equivalent to finding boundaries for $\Mod(\cB_1)\boxtimes_{\mathbf{3Vect}(A[1])} \Mod(\cB^{rev}_2)$. The relevance for boundaries between algebras is explained further in section \ref{subsection:3Catduality} and the results are used for constructing duality defects in (3+1)d.

\section{Higher Categorical Setting}\label{section:highercats}


In this section, we introduce the higher categorical objects that are going to be our main focus. We being by reviewing higher condensation monads and higher fusion categories. We then specialize our discussion to sylleptic fusion 2-categories and their corresponding braided fusion 3-categories. Finally, we argue that certain 4-categories associated to a sylleptic fusion 2-category are equivalent.

\subsection{Higher Condensations}

By an $n$-category, we mean a weak $n$-category in the sense that the composition of $k$-morphisms for $k< n$ is only well defined up to higher coherences, and the $n$-morphisms form a set. More precisely, following \cite{baez1998higher,JF}, weak $n$-categories can be defined iteratively as $(\infty,1)$-categories enriched in a weak $(n-1)$-category, where a weak $0$-category is a set. We will work in the $\mathbb{C}$-linear setting, where the $n$-morphisms form a vector space over $\mathbb{C}$. Physically, $n$-categories describe a collection of topological defects, along with a collection of sub-defects, of $n$ spacetime dimension. If there is no fusion or braiding structure on the $n$-category, then it describes a collection of topological boundary conditions of an $(n+1)$-dimensional\footnote{All dimensions mentioned in this work are spacetime dimensions.} TFT. If there are other structures on the $n$-category, then it describes other kinds of $n$-dimensional topological defects.

We begin by recalling the notion of an $n$-condensation in an $n$-category $\mathbf{C}$ from \cite{GJF}, which is an inductive definition. Let $X$ and $Y$ be objects of the 1-category. A $1$-condensation of $X$ onto $Y$ is a pair of morphisms $f:X\twoheadrightarrow Y$ and $g:X\hookrightarrow Y$ such that $f\circ g = \mathrm{id}_Y$, i.e.\ a split surjection of $X$ onto $Y$. An $n$-condensation is a pair of morphisms $f:X\rightarrow Y$ and $g:Y\rightarrow X$ together with an $(n-1)$-condensation $(f\circ g,\mathrm{id}_Y,\phi,\gamma,...)$ from $f\circ g$ onto $\mathrm{id}_Y$. We write $(X,Y,f,g,...)$ for an $n$-condensation. We denote by $\spadesuit_n$ the walking $n$-condensation, that is, the $n$-category with two objects $X$ and $Y$, such that functor $\spadesuit_n\rightarrow \mathbf{C}$ are precisely $n$-condensations.
 
We denote by $\clubsuit_n \subset \spadesuit_n$ the full subcategory on the object $X$. This subcategory has a unique 1-morphism $e := g \circ f$. Then, an $n$-condensation monad is a functor $(X,e,...):\clubsuit_n\rightarrow \mathbf{C}$. Said differently, $\clubsuit_n$ is the walking $n$-condensation monad. In slightly different terminology, we shall also that say that an $n$-condensation monad in an $n$-category $\mathbf C$ is an object $X \in \mathbf{C}$ together with a condensation algebra $(e,...)$ in the monoidal $(n-1)$-category $End_{\mathbf{C}}(X)$. In the case $n=1$, $1$-condensation monads are precisely idempotent morphisms of a 1-category. For $n=2$, $2$-condensation algebras are non-unital and non-counital special Frobenius algebras in the monoidal 1-category $End_{\mathbf{C}}(X)$.

We say that an $n$-condensation monad splits if it can be extended to an $n$-condensation. It was shown in \cite[Theorem 2.3.2]{GJF} that, if such an extension exists, it is unique up to a contractible space of choices. This leads to an inductive definition of condensation completeness. More precisely, we say that a $1$-category has all condensates if all 1-condensation monads, i.e.\ idempotents, splits. Then, we say that an $n$-category $\mathbf{C}$ has all condensates if all of its $Hom$-$(n-1)$-categories have all condensates, and every $n$-condensation monad in $\mathcal{C}$ splits. If $\mathbf{C}$ is a $n$-category, all of whose $Hom$-$(n-1)$-categories have all condensates, it can be canonically embedded into an $n$-category $Kar(\mathbf{C})$ that has all condensates. We refer the reader to \cite[Theorem 2.3.10]{GJF} for details, and only mention that the objects of $Kar(\mathbf{C})$ are $n$-condensation monads in $\mathbf{C}$, and the 1-morphisms are $n$-condensation bimodules as in \cite[Definition 2.3.3]{GJF}. In particular, we think of $Kar(\mathbf{C})$ as the Morita $n$-category of $n$-condensation monads in $\mathbf{C}$.

The notion of $n$-condensation monad reviewed above is not unital. However, in order to compare it with the definitions of other algebraic structures, it is necessary to impose some unitality conditions.
Provided that $n\geq 2$, we say that an $n$-condensation is $0$-unital if $f$ is left adjoint to $g$. More precisely, there exists a $2$-morphism $\eta:\mathrm{id}_X\Rightarrow g\circ f$ satisfying the triangle identities with $\phi:f\circ g\Rightarrow \mathrm{id}_Y$, potentially up to higher coherences. We emphasize that this is a property and not additional structure. For our purposes, we will need other unitality conditions. For $n\geq 3$, we will say that an $n$-condensation $(X,Y,f,g,...)$ is $1$-unital if the corresponding $(n-1)$-condensation $(f\circ g, \mathrm{id}_X, ...)$ is $0$-unital. More generally, for any integer $0\leq k\leq n-1$, we define inductively the notion of a $k$-unital $n$-condensation. Then, we say that an $n$-condensation is fully unital if it is $k$-unital for all $k$'s. Mimicking the procedure by which we obtain the definition of an $n$-condensation monad from that of an $n$-condensation, we arrive at the definition of a $k$-unital, resp.\ fully unital, $n$-condensation monad. Given a fully unital $n$-condensation monad $(X,e,...)$, we will also say that $(e,...)$ is a separable algebra in the monoidal $(n-1)$-category $\mathrm{End}_{\mathbf{C}}(X)$. In other words, a separable algebra is a fully unital condensation algebra.

It was argued in \cite[Theorem 3.1.7]{GJF} that if the $n$-category $\mathbf{C}$ has adjoints for $1$-morphisms and $Hom$-$(n-1)$-categories have all condensates, every $n$-condensation monad is Morita equivalent, i.e.\ it is equivalent as an object of $Kar(\mathbf{C})$, to a $0$-unital $n$-condensation monad. In particular, if $\mathbf{C}$ has adjoints for $k$-morphisms for some $1\leq k\leq (n-1)$, it follows that every $n$-condensation monad is Morita equivalent to a $(k-1)$-unital one. Provided that $\mathbf{C}$ has adjoints for morphisms of all levels, it is natural to wonder whether every $n$-condensation monad is Morita equivalent to a fully unital one. This does not follow immediately from the construction given in \cite[Theorem 3.1.7]{GJF}. Nevertheless we will making the following assumption:
\begin{Assumption*}
    If $\mathbf{C}$ has adjoints for morphisms of all levels, then every $n$-condensation monad is Morita equivalent to a fully unital one.
\end{Assumption*}
 \noindent Said differently, we expect that every $n$-condensation algebra is Morita equivalent to a separable algebra. Below, we will freely use this claim for $n\leq 4$.

For later use, we now unpack the above notion of a separable algebra for small values of $n$. In the case $n=2$, separable algebras in the sense above correspond to unital, but non-counital, special Frobenius algebras. In the monoidal 1-category of vector spaces $\mathbf{Vect}$, there is a more familiar notion of separable algebra:\ A (unital) algebra $A$ in $\mathbf{Vect}$ is called separable if its multiplication map $m:A\otimes A\rightarrow A$ admits a section as a map of $A$-bimodules. It was shown in \cite{eilenberg1956dimension} that any separable algebra $A$ can be given the structure of a special Frobenius algebra. This motivates our choice of terminology. Moreover, observe that being separable is a property of an algebra and not additional structure. In the case $n=3$, our notion of separable algebra coincides with that given in \cite{D7}. Firstly, recall that a rigid algebra in a monoidal 2-category is a unital algebra $A$, whose multiplication map $m:A\otimes A\rightarrow A$ admits a right adjoint $m^R$ as a map of $A$-bimodules. Then, a rigid algebra is called separable provided that the counit $\epsilon$ of the adjunction $m\dashv m^R$ admits a section as a map of $A$-bimodules. Rigid algebras in the monoidal 2-category $\mathbf{2Vect}$ of finite semisimple 1-categories are precisely multifusion 1-categories. In this case, it is known \cite{ENO1, DSPS13, JF} that every rigid algebra is in fact a separable. Of most interest to us will be the case $n=4$. We begin by introducing the notion of a rigid algebra in a monoidal 3-category. A rigid algebra is a (unital) algebra $(A,m,...)$ whose multiplication map $m:A\otimes A\rightarrow A$ has a right adjoint $m^R$ as a map of $A$-bimodules, and the counit $\epsilon$ of the adjunction $m\dashv m^R$ has a right adjoint $\epsilon^R$ as a 2-morphism of $A$-bimodules. By inspection, a separable algebra in $End_{\mathbf{C}}(X)$ is a rigid algebra $(A,m,...)$ such that the counit $\Upsilon$ of the adjunction $\epsilon\dashv\epsilon^R$ has a section as a $3$-morphism of $A$-bimodules.

\subsection{Higher Fusion Categories}

We can define fusion $n$-categories (over $\mathbb{C}$) inductively as follows:\ The starting point of our inductive definition, that is, the case $n=1$, is taken to be the classical definition as in \cite{EGNO}. In the case $n=2$, our definition recovers \cite[Definition 2.1.6]{DR} (or \cite{D2} to account for our use of weak 2-categories). For $n\geq 2$, we make the following string of definitions:
\begin{itemize}
\item An $n$-category $\mathbf{C}$ is called \textbf{locally finite semisimple} if for every object $X \in \mathbf{C}$, $\Omega_X \mathbf{C} := End_{\mathbf{C}}(X)$ is a multifusion $(n-1)$-category. In particular, every $k$-morphism for $0<k<n$ has both a left and a right adjoint.

\item An $n$-category $\mathbf{C}$ is called \textbf{semisimple} if it is locally finite semisimple, and it is Cauchy complete in the sense of \cite{GJF}, that is, every $n$-condensation monad splits and direct sums for objects exist. Every locally finite semisimple $n$-category $\mathbf{C}$ embeds into the semisimple $n$-category $Cau(\mathbf{C})$ given by its Cauchy completion \cite[Theorem 4.3.5]{GJF}. Explicitly, $Cau(\mathbf{C})$ is the condensation completion of the direct sum completion of $\mathbf{C}$.

Physically, Cauchy completion is the statement of condensation completion:\ i.e.\ we include all possible $n$-dimensional topological defects that can be constructed by condensing/gauging topological defects living inside an $n$-dimensional topological defect $X$, which the mutlifusion $(n-1)$ category $\Omega_X \mathbf{C}$. Here it should be noted that we include an $n$-dimensional TFT stacked on top of a $d>n$-dimensional theory as an $n$-dimensional topological defect, or equivalently as an object in $\mathbf{C}$.

\item An object $X$ of a semisimple $n$-category $\mathbf{C}$ is called \textbf{simple} if $\Omega_X \mathbf{C}$ is fusion. 

Physically, this means that there are no genuine topological point-like defects on $X$ except for multiples of the identity local operator.
\item Two simple objects are called \textbf{connected} if there exists a non-zero 1-morphism between them. If $\mathbf{C}$ is locally finite, being connected defines an equivalence relation on the set of equivalence classes of simple objects of $\mathbf{C}$, and we write $\pi_0(\mathbf{C})$ for the corresponding quotient.

Physically, two $n$-dimensional topological defects are connected if there exists a non-zero topological interface between them. Two topological defects that are connected are also related by condensations, or in other words either of the two defects can be obtained by gauging topological sub-defects living on the other defect, possibly stacked with an $n$-dimensional TFT. Thus, $\pi_0(\mathbf{C})$ characterizes gauge inequivalent classes of topological $n$-dimensional defects.
\item A \textbf{finite semisimple $n$-category} is a semisimple $n$-category such that $\pi_0(\mathbf{C})$ is finite. For $n=1$, connected components are exactly (equivalence classes of) simple objects. For $n=2$, every connected component contains only finitely many simple objects \cite{DR}. For $n = 3$, non-empty connected components contain infinitely many simple objects. Physically, this is because there are an infinite number of 3d TFTs.
\item A \textbf{multifusion} $n$-category $\mathbf{C}$ is a monoidal finite semisimple $n$-category that is rigid, i.e.\ every object has a left and a right dual.

Physically, such an $n$-category describes topological defects that can be bent into $\cup$ and $\cap$ shapes.
\item A \textbf{fusion} $n$-category is a multifusion $n$-category whose monoidal unit is simple.
\end{itemize}

Physically, a fusion $n$-category $\mathbf{C}$ describes a collection of topological $n$-dimensional defects (satisfying certain finiteness properties discussed above) along with their fusion rules. Such a collection is required to be closed under gauging operations. If there is no additional braiding structure on $\mathbf{C}$, then $\mathbf{C}$ describes topological defects of an $(n+1)$-dimensional QFT. In the presence of additional braiding, i.e. further monoidal structures, then $\mathbf{C}$ describes topological defects in $d>n+1$ dimensional QFTs, where the difference between $d$ and $n+1$ captures the amount of further monoidality. In other words, fusion $n$-categories describe finite global symmetries of quantum systems of spacetime dimension $d>n$.

Another definition of multifusion $n$-category was put forward in \cite{JF}. We discuss how it compares with ours for small values of $n$. In the case $n=1$, our definition coincides with the classical notion of a multifusion $1$-category given for instance in \cite{EGNO}. It also agrees with the definition of a multifusion $1$-category used in \cite{JF} by \cite[Corollary II.8]{JF}. Namely, a multifusion 1-category in the sense of \cite{JF} is a separable algebra in $\mathbf{2Vect}$, the monoidal 2-category of finite semisimple 1-categories. On the other hand, a multifusion 1-category in our sense is a rigid algebra in $\mathbf{2Vect}$. These two notions agree by \cite{ENO1,DSPS13} given that we working over the complex numbers. In particular, they lead to the same notion of a finite semisimple 2-category, which is the one given in \cite{DR}.\footnote{To be precise, that the completion operations are the same was discussed in \cite[Section 3.1]{GJF}, see also \cite{D1}.} Our notion of multifusion 2-category then coincides with that introduced in \cite{DR}. It also corresponds to the notion of a rigid algebra in $\mathbf{3Vect}$, the monoidal 3-category of finite semisimple 2-categories. On the other hand, the multifusion 2-categories used in \cite[Definition II.9]{JF} are separable algebras in $\mathbf{3Vect}$ by \cite[Theorem 4.1.1]{GJF} (see also \cite[Second Proof of Theorem 1]{JF}).\footnote{Here we are in addition using our claim that every ($0$-unital) $3$-condensation algebra is Morita equivalent to a separable algebra.} That these two notions agree was discussed in \cite[Remark 5.2.4]{D9}. Then again, there is a well-defined notion of a finite semisimple 3-category. We expect that our notion of multifusion 3-category agrees with that used in \cite{JF}, although a proof is beyond the scope of our current investigations. One could then wonder whether the corresponding notions of multifusion 4-categories agree, and so on, and so forth.

\paragraph{Operations relating higher categories of different levels.}
There are two operations, that we label by $\Omega$ and $\Sigma$ that relate higher-categories differing by one level. We review these operations below.

\paragraph{Loops.}
The first operation $\Omega$ inputs a fusion $n$-category $\mathbf{C}$ and outputs the braided fusion $(n-1)$-category formed by endomorphisms of the unit object $\mathbf{1}$
\be
\Omega\mathbf{C}:=\Omega_{\mathbf{1}}(\mathbf{C})=End_{\mathbf{C}}(\mathbf{1}) \,.
\ee
More generally, $\Omega\mathbf{C}$ carries one additional level of monoidality than $\mathbb{C}$. Physically, one should think of $\Omega\mathbf{C}$ as the category formed by genuine topological $(n-1)$-dimensional defects (and their sub-defects) contained within the collection $\mathbf{C}$. Note that such topological $(n-1)$-dimensional defects can braid with each other and hence $\Omega\mathbf{C}$ is a braided fusion $(n-1)$-category.

\paragraph{Suspension.}
The other operation $\Sigma$ is more involved, and we follow \cite{GJF}. Given any multifusion $(n-1)$-category $\mathbf{D}$, we let $\mathrm{B}\mathbf{D}$ be the locally semisimple $n$-category with a single object $*$ and $End_{\mathrm{B}\mathbf{D}}(*)=\mathbf{D}$. Then, we define $$\Sigma\mathbf{D}:= Cau(\mathrm{B}\mathbf{D})$$ as the finite semisimple $n$-category obtained by taking the Cauchy completion of $\mathrm{B}\mathbf{D}$. Note that by definition, we have
\be
\Omega(\Sigma\mathbf{D})\simeq\mathbf{D} \,.
\ee
Provided that the fusion $n$-category $\mathbf{C}$ is connected, i.e.\ that $\pi_0(\mathbf{C})$ is a singleton, we also have
\be\label{e1}
\Sigma(\Omega\mathbf{C})\simeq\mathbf{C}\,.
\ee

For small values of $n$, it is known that the finite semisimple $n$-category $\Sigma\mathbf{D}$ coincides with the $n$-category $\mathbf{Mod}(\mathbf{D})$ of finite semisimple module $(n-1)$-categories over $\mathbf{D}$. Namely, for $n=2$, we have that $\mathbf{D}$ is a multifusion 1-category, and it was shown in \cite[Theorem 3.3.3]{GJF} that $\Sigma\mathbf{D}$ can be identified with the genuine Morita 2-category of separable algebras in $\mathbf{D}$. It then follows from \cite{DR,D1} that $\Sigma\mathbf{D}$ agrees with the 2-category of finite semisimple $\mathbf{D}$-module categories. One key step is Ostrik's theorem \cite{O1}, which allows to express every finite semisimple $\mathbf{D}$-module 1-category as the category of modules over an algebra in $\mathbb{D}$. It is then a consequence of the main theorem of \cite{ENO1} that this algebra is separable. In the case $n=3$, it follows from \cite[Theorem 3.3.3]{GJF} and the claim we have made above that every $3$-condensation algebra is Morita equivalent to a separable algebra. We therefore have that $\Sigma\mathbf{D}$ is equivalent to the genuine Morita $3$-category of separable algebras in $\mathbf{D}$. Appealing to \cite{D8,D9}, one finds that this Morita 3-category is equivalent to the 3-category of finite semisimple $\mathbf{D}$-module 2-categories. Again, one key step consists in proving that every finite semisimple $\mathbf{D}$-module 2-category is the 2-category of modules over a rigid algebra in $\mathbf{D}$, which was achieved in \cite{D4}. Then, one shows that this rigid algebra must be separable \cite{D9}. Now, given a multifusion 3-category $\mathbf{D}$, the above discussion motivates the equivalence of 4-categories $$\Sigma\mathbf{D}\simeq\mathbf{Mod}(\mathbf{D}),$$ that will be liberally employed below. Establishing this result rigorously would not only entail proving that every finite semisimple $\mathbf{D}$-module 3-category is equivalent to the 3-category of modules over a separable algebra in $\mathbf{D}$, but also establishing that every $4$-condensation algebra is Morita equivalent to a separable algebra.

\subsection{Fusion 3-Categories}

\paragraph{Without Braiding.}
Our focus of study are the fusion 3-categories $\cVect(A[1])$ of $A[1]$-graded 3-vector spaces for some finite abelian group $A$, where recall that $A[p]$ denotes that we are considering a $p$-form group $A$. These fusion 3-categories can be constructed as
\be\label{e2}
\cVect(A[1])=\Sigma\mathbf{2Vect}(A[0]) \,,
\ee
where $\mathbf{2Vect}(A[0])$ is the (braided) fusion 2-category of $A[0]$-graded 2-vector spaces \cite{DR}. In fact, the fusion 3-category $\cVect(A[1])$ is connected (as the higher group $A[1]$ is connected), so \eqref{e2} can be obtained by applying \eqref{e1} using that $\Omega\cVect(A[1])=\mathbf{2Vect}(A[0])$. We also think of
$$\cVect(A[1])\simeq \mathbf{Mod}(\mathbf{2Vect}(A[0]))$$
as the 3-category of finite semisimple $\mathbf{2Vect}(A[0])$-module 2-categories, or equivalently \cite{D8,D9}, as the Morita 3-category of rigid algebras in $\mathbf{2Vect}(A[0])$. This last model is very explicit. Namely, rigid algebras in $\mathbf{2Vect}(A[0])$ are simply $A$-graded multifusion 1-categories, for which the grading does not have to be faithful. We find that $\mathbf{Mod}(\mathbf{2Vect}(A[0]))$ is the Morita 3-category of $A$-graded multifusion 1-categories and $A$-graded finite semisimple bimodule 1-categories. Simple objects correspond to $A$-graded fusion 1-categories. In particular, the monoidal unit is $\mathbf{Vect}$ for which only the grade given by $0\in A$ is non-empty.

Physically, $\cVect(A[1])$ describes $(d-3)$-form symmetry with no 't Hooft anomaly in spacetime dimension $d\ge4$. The $(d-3)$-form symmetry is generated by invertible surface defects whose fusion is described by the abelian group $A$. The 3-category $\cVect(A[1])$ also contains the possible 3-dimensional topological defects that can be obtained by condensing the aforementioned surface defects. For $d=4$, the 3-category $\cVect(A[1])$ is simply a fusion 3-category, while for $d>4$, we have that $\cVect(A[1])$ is a fusion 3-category with higher levels of monoidality. For example, for $d=5$, $\cVect(A[1])$ is a braided fusion 3-category. In this paper, the case $d=4$ will play the most prominent role.

\paragraph{With Braiding.}
We will also consider connected braided fusion 3-categories other than $\cVect(A[1])$. Such categories are all of the form $\Mod( \mathfrak{S})\simeq\Sigma\mathfrak{S}$, where $\mathfrak{S}$ is a \textbf{sylleptic} fusion 2-category, that is, has three levels of monoidality. As already remarked earlier, physically speaking, we can consider a 2-category whose objects are given by surfaces in $d$ spacetime dimensions. These surfaces then have $d-2$ ambient dimensions in which to braid, and therefore form a $(d-2)$-monoidal 2-category. When $d=5$, this level of monoidality is called sylleptic \cite{BD}. In particular, a sylleptic monoidal 2-category is a braided monoidal 2-category equipped with additional structure. Recall that a braided structure on a monoidal 2-category $\mathfrak{C}$ consists of a braiding $b$, which comprises of natural equivalences
\be
b_{X|Y}: X\otimes Y\rightarrow Y \otimes X
\ee
for all objects $X,Y\in \mathfrak{C}$, together with modifications $R$ and $S$, called hexagonators, that provide the higher coherences for the monoidality of $b$, and satisfy various axioms. A syllepsis $s$ on a braided monoidal 2-category is a modification given by isomorphisms
\be
s_{X|Y}: \quad b_{X|Y} \Rightarrow b^{-1}_{Y|X}
\ee
for all objects $X,Y$ satisfying coherence conditions. The coherence data for the hexagonators and syllepsis can be found in \cite{RS} (see also \cite{SP} for an easily accessible modern account), but we will not need that level of detail in this work. 

\paragraph{The Braided Fusion 3-category $\cZ(\cVect(A[1]))$.}
The most relevant braided fusion 3-category for our work is $\cZ(\cVect(A[1]))$, the Drinfeld center of $\cVect(A[1])$. This 3-category describes the topological operators in Dijkgraaf-Witten theory for $A[1]$ in $(4+1)$d, i.e.\ the SymTFT for a $(3+1)$d theory with 1-form symmetry $A$. This theory has an action given by 
\be
S=\int b_2\cup\delta c_2\,,
\ee
where $b_2$ is a 2-cochain valued in the abelian group $A$ and $c_2$ is a 2-cochain valued in the Pontryagin dual abelian group $\widehat A$.
It follows from the results of \cite{JF} that 
\be
\Omega\cZ\big(\cVect(A[1])\big)=\mathbf{2Vect}^{\varsigma}\big(A[0]\oplus \widehat{A}[0]\big)
\ee
as sylleptic fusion 2-categories, where the braiding $b$ is trivial, and $\varsigma$ is a canonical syllepsis that will be introduced below. Moreover, it follows from an $S$-matrix argument \cite{JF4}, also known as remote detectability, that $\cZ(\cVect(A[1]))$ is connected, so we have an equivalence of braided fusion 3-categories
\be\label{Grumpy}
\cZ\big(\cVect(A[1])\big)=\Sigma\bVect^\varsigma\big(A[0]\oplus \widehat{A}[0]\big) \,.
\ee
The fusion 2-category $\bVect^\varsigma\big(A[0]\oplus \widehat{A}[0]\big)$ captures topological Wilson surfaces constructed from the gauge fields $b_2$ and $c_2$, with $\varsigma$ capturing the non-trivial linking between these two types of surfaces. Since there are no fundamental fields that can give rise to 3-dimensional Wilson topological defects, all such defects have to be condensations of the above Wilson surfaces. Thus, topological defects of codimension-2 and above of the SymTFT $\mathfrak{Z}(\cVect(A[1]))$ are precisely captured by the 3-category $\Sigma\bVect^\varsigma\big(A[0]\oplus \widehat{A}[0]\big)$, which is the content of the above equation.

\subsection{Sylleptic strongly fusion 2-categories}\label{subsection:sylleptic}
As discussed above in (\ref{Grumpy}), our primary object of study $\cZ(\cVect(A[1])$ is the condensation completion of a sylleptic fusion 2-category $\mathbf{2Vect}^\varsigma((A\oplus\widehat{A})[0])$. In fact, $\mathbf{2Vect}^\varsigma((A\oplus\widehat{A})[0])$ is a \textbf{strongly} fusion 2-category, or in other words, there are no genuine topological line operators (other than the identity) in this 2-category. Motivated by this, we review the general theory of (sylleptic) strongly fusion 2-categories.

\paragraph{Without Braiding.}
A fusion 2-category $\mathbf{C}$ has no line operators if it satisfies
\be
\Omega\mathbf{C} \simeq \mathbf{Vect}\,.
\ee
Such fusion 2-categories are referred to as \textbf{strongly fusion 2-categories}.\footnote{Sometimes the definition of strongly fusion 2-categories is taken to be $\Omega\mathbf{C} \simeq \mathbf{Vect}$ or $\Omega\mathbf{C} \simeq \mathbf{sVect}$, where in the latter case we do have a non-trivial line operator. The former ones are then referred to as bosonic strongly fusion 2-categories, while the latter ones are referred to as fermionic strongly fusion 2-categories. As the latter notion will not appear in this work, we have decide to keep our nomenclature brief.}  Furthermore, it is known that if $\mathbf{C}$ is a strongly fusion 2-category, then the equivalence classes of simple objects of $\mathbf{C}$ form a group $G$ under the tensor product \cite{JFY1}. In particular, it follows that every strongly fusion 2-category is of the form $\mathbf{2Vect}^{\pi}(G[0])$ for some class $\pi$ in $H^4(G[1];\mathbb{C}^{\times})$. 

\paragraph{With Braiding but Without Syllepsis.}
We will focus on the case when the group $G=A$ is abelian. This is manifestly a necessary condition for the corresponding fusion 2-category to admit a braiding, and therefore also a syllepsis. It is a well-known homotopy-theoretic fact that all braided monoidal structures on $\mathbf{2Vect}(A[0])$ are parameterized by the group $H^5(A[2];\mathbb{C}^{\times})$.\footnote{We use the notation $H^p(A[q];\mathbb{C}^{\times})=H^p(B^{q}A;\mathbb{C}^{\times})$. In particular, the standard group cohomology is denoted as $H^p(A[1];\mathbb{C}^{\times})$.} We note that the underlying monoidal structure may not be trivial

Physically, a braiding on $\mathbf{2Vect}(A[0])$ can be regarded as a 't Hooft anomaly of an $A$ 1-form symmetry in 4d, which explains why they are parametrized by the group $H^5(A[2];\mathbb{C}^{\times})$.

\paragraph{With Syllepsis.}
Given a finite abelian group $A$, we wish to describe the possible sylleptic structures on $\mathbf{2Vect}(A[0])$. These consist of a choice of braiding together with a compatible syllepsis. It follows from a standard homotopy theoretic argument that sylleptic structures on $\mathbf{2Vect}(A[0])$ are parameterized by $H^6(A[3];\mathbb{C}^{\times})$.

Physically, a sylleptic structure on $\mathbf{2Vect}(A[0])$ can be regarded as a 't Hooft anomaly of an $A$ 2-form symmetry in 5d, which explains why they are parametrized by the group $H^6(A[3];\mathbb{C}^{\times})$.

As explained for instance in \cite{JFY2}, the group $H^6(A[3];\mathbb{C}^{\times})$ admits an explicit description. Let us write $A_2$ for the subgroups of elements of order $2$, and $\widehat{\Lambda^2 A}$ for the group of alternating 2-forms on $A$, that is additive maps $\lambda:A\otimes_{\mathbb{Z}} A\rightarrow \mathbb{C}^{\times}$ such that $\lambda(a,a)=1$ for every $a\in A$.

\begin{Lemma}
Let $A$ be a finite abelian group. There is an isomorphism of groups
\be
H^6(A[3];\mathbb{C}^{\times})\cong \widehat{A_2}\oplus \widehat{\Lambda^2 A}\,.
\ee
In particular, sylleptic structures on the fusion 2-category $\mathbf{2Vect}(A[0])$ (up to grading preserving equivalences) are classified by $\widehat{A_2}\oplus \widehat{\Lambda^2 A}$.
\end{Lemma}

\noindent The summand $\widehat{A_2}$ parameterizes the allowed braidings, and the summand $\widehat{\Lambda^2 A}$ parameterizes the allowed syllepsis. In particular, $\widehat{A_2}$ is a subgroup of the group parameterizing all possible braidings on $\mathbf{2Vect}(A[0])$, namely the subgroup of all possible braidings that are compatible with the existence of a sylleptic structure. Furthermore, given a syllepsis $s$ on $\mathbf{2Vect}(A[0])$, the corresponding alternating 2-form is given by 
\be
\mathsf{Alt}(s)(a,b):=s(a,b)/s(b,a)
\ee
for every $a,b\in A$.

It follows from the above considerations that every sylleptic strongly fusion 2-category is of the form 
\be
\mathbf{2Vect}^{(b,s)}(A[0])
\ee
for a finite abelian group $A$ and a class $(b,s)$ in $H^6(A[3];\mathbb{C}^{\times})$, where $b$ is a braiding and $s$ is a compatible syllepsis. 

In our applications, we will take $b$ to be trivial as this holds in the case of $\Omega\cZ(\cVect(A[1]))$ (because of the existence of a fiber 3-functor). Also note that this is automatic if $A$ has odd order.
Moreover, we will use the canonical alternating 2-form $\mathsf{Alt}(\varsigma)$ on $A\oplus\widehat{A}$ defined by \be
\mathsf{Alt}(\varsigma)(a_1,\lambda_1,a_2,\lambda_2):=\lambda_1(a_2)/\lambda_2(a_1)\,,
\ee
and we write $\varsigma$ for the corresponding class in cohomology. This yields a sylleptic structure on the fusion 2-category $\mathbf{2Vect}^\varsigma((A\oplus\widehat{A})[0])$ for which \be\label{eq:centerequiv}
\cZ(\cVect(A[1])\simeq \Sigma\mathbf{2Vect}^\varsigma((A\oplus\widehat{A})[0])
\ee
is an equivalence of braided fusion 3-categories. 

\subsection{Higher Morita Categories:\ $\Pic$ and $\mathscr{W}itt$}
\label{sec:HigherMorita}
We now consider sylleptic (but not necessarily strongly) fusion 2-categories. Recall that physically these describe topological surface defects in 5d. Let $\mathfrak{S}$ be a sylleptic fusion 2-category. We are interested in understanding the fusion 4-category $\Sigma^2\mathfrak{S}$. Physically, this describes 4-dimensional topological defects (and their sub-defects) in the 5d theory that can be obtained by condensing the defects in $\mathfrak{S}$. In the particular case of $\mathfrak{S}=\mathbf{2Vect}^\varsigma((A\oplus\widehat{A})[0])$, the fusion 4-category $\Sigma^2\mathfrak{S}$ describes topological defects of the SymTFT $\mathfrak{Z}(\cVect(A[1]))$. 

We will be especially interested in the collection of invertible 4-dimensional topological defects. In the particular case of $\mathfrak{S}=\mathbf{2Vect}^\varsigma((A\oplus\widehat{A})[0])$, this space is intimately related to the $G$-graded extensions of $\cVect(A[1])$. For $G=\mathbb{Z}/2,\mathbb{Z}/4$, such extensions construct the fusion 3-categories corresponding to the familiar duality defects. In fact, in \cite{Kaidi:2022cpf}, the SymTFT for these duality symmetries $G$ are constructed by gauging $G$ 0-form symmetries (comprising of condensation defects) of $\mathfrak{Z}(\cVect(A[1])$.

\paragraph{The Picard Morita 4-Category.}
We begin by considering a 4-category that can be associated to any fusion 3-category $\mathscr{C}$. This is the Morita 4-category of rigid algebras in $\mathscr{C}$, which we will identify with the 4-category $\mathbf{Mod}(\mathscr{C})$ formed by finite semisimple module 3-categories over $\mathscr{C}$.

In the case that the fusion 3-category is the condensation completion of a sylleptic fusion 2-category $\mathfrak{S}$, that is
\be
\mathscr{C}=\Sigma\mathfrak{S}
\ee
the 4-category $\mathbf{Mod}(\Sigma\mathfrak{S})$ is a fusion 4-category. Upon unpacking, we find that rigid algebras in $\Sigma\mathfrak{S}$ are exactly $\mathfrak{S}$-central multifusion 2-categories, which can be identified as the objects of $\mathbf{Mod}(\Sigma\mathfrak{S})$.

We are especially interested in the invertible objects of this 4-category.
\begin{Definition}\label{def:Picardspace}
Let $\mathfrak{S}$ be a sylleptic fusion 2-category. We write $\mathscr{P}ic(\Sigma\mathfrak{S})$ for the Picard space $(\Sigma^2\mathfrak{S})^{\times}$ of the fusion 4-category $\Sigma^2\mathfrak{S}$, that is, the space consisting of invertible objects and invertible morphisms. We also set ${Pic}(\Sigma\mathfrak{S}):=\pi_0(\mathscr{P}ic\big(\Sigma\mathfrak{S})\big)$, the group of equivalence classes of invertible objects of $\Sigma^2\mathfrak{S}$.
\end{Definition}



Physically, ${Pic}(\Sigma\mathfrak{S})$ is the group formed by invertible topological 4-dimensional defects in $\Sigma^2\mathfrak{S}$, which describes a 0-form symmetry that must be carried by a 5d theory containing topological surfaces described by $\mathfrak{S}$. In particular, these are invertible 4-dimensional condensation defects arising from surfaces in $\mathfrak{S}$. 


\begin{Remark}
As $\mathfrak{S}$ is a sylleptic fusion 2-category, the space $\mathscr{P}ic(\Sigma\mathfrak{S})$ inherits the structure of a grouplike topological monoid, i.e.\ it comes equipped with a group structure. In particular, it makes sense to consider its delooping $\mathrm{B}\mathscr{P}ic(\Sigma\mathfrak{S})$.
\end{Remark}

\paragraph{The Witt Morita 4-Category.}
On the other hand, we can consider an apriori different Morita 4-category $\mathsf{Mor}_2^{sep}(\mathfrak{S})$ of braided rigid algebras in $\mathfrak{S}$. Indeed, it follows from a slight generalization of \cite{JFS}, that we can consider the Morita 4-category of braided algebras in any braided monoidal 2-category with suitable colimits, and the existence of such colimits for braided rigid algebras was established in \cite{D8,D9}. Moreover, as $\mathfrak{S}$ is a sylleptic fusion 2-category, $\mathsf{Mor}_2^{sep}(\mathfrak{S})$ inherits a monoidal structure.\footnote{The underlying 4-category of $\mathsf{Mor}_2^{sep}(\mathfrak{S})$ only depends on the underlying braided monoidal structure of $\mathfrak{S}$.} We will specifically be interested in the invertible objects of this monoidal 4-category.

\begin{Definition}
Let $\mathfrak{S}$ be a sylleptic fusion 2-category. We write $\mathscr{W}itt(\mathfrak{S})$ for the Picard space of the monoidal 4-category $\mathsf{Mor}_2^{sep}(\mathfrak{S})$. We also set $\mathcal{W}itt(\mathfrak{S}):=\pi_0(\mathscr{W}itt(\mathfrak{S}))$, the Witt group of invertible braided rigid algebras in $\mathfrak{S}$.
\end{Definition}

\paragraph{Equivalence of Pic and Witt Morita 4-Categories.}
We assert that the above two Morita 4-categories are actually equivalent.
\begin{Theorem}
There is an equivalence of monoidal 4-categories between the Morita 4-category $\mathsf{Mor}_2^{sep}(\mathfrak{S})$ of rigid braided algebras in $\mathfrak{S}$ and the Morita 4-category $\mathbf{Mod}(\Sigma\mathfrak{S})$ of $\mathfrak{S}$-central multifusion 2-categories. The equivalence is given by sending a braided rigid algebra $\mathcal{B}$ in $\mathfrak{S}$ to the corresponding $\mathfrak{S}$-central braided multifusion 2-category $\mathbf{Mod}_{\mathfrak{S}}(\mathcal{B})$ of $\mathcal{B}$-modules in $\mathfrak{S}$.
\end{Theorem}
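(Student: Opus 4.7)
The strategy is to construct an explicit monoidal $4$-functor
\[
F: \mathsf{Mor}_2^{sep}(\mathfrak{S}) \longrightarrow \mathbf{Mod}(\Sigma\mathfrak{S})
\]
implementing the stated assignment $\mathcal{B}\mapsto \mathbf{Mod}_{\mathfrak{S}}(\mathcal{B})$ and then verify it is an equivalence layer by layer. On $1$-morphisms, $F$ sends a suitably rigid $\mathcal{B}_1\text{-}\mathcal{B}_2$-bimodule algebra in $\mathfrak{S}$ to the bimodule 2-category between $\mathbf{Mod}_{\mathfrak{S}}(\mathcal{B}_1)$ and $\mathbf{Mod}_{\mathfrak{S}}(\mathcal{B}_2)$ obtained by relative tensor, and similarly at the higher Morita levels. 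Most of the functoriality and coherence of $F$ should follow from a direct adaptation of the general framework of \cite{JFS} to the rigid braided setting, once the required relative-tensor colimits are supplied by \cite{D8,D9}.

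The first substantive step is to check that $\mathbf{Mod}_{\mathfrak{S}}(\mathcal{B})$ really lands in $\mathbf{Mod}(\Sigma\mathfrak{S})$. The braided algebra structure on $\mathcal{B}$ equips $\otimes_{\mathcal{B}}$ with a coherent associator, producing a multifusion 2-category with rigidity inherited from that of $\mathcal{B}$. The syllepsis on $\mathfrak{S}$ is then precisely the datum needed to lift the tautological left $\mathfrak{S}$-action on $\mathbf{Mod}_{\mathfrak{S}}(\mathcal{B})$ to a braided monoidal functor $\mathfrak{S}\to \cZ(\mathbf{Mod}_{\mathfrak{S}}(\mathcal{B}))$, yielding the $\mathfrak{S}$-central structure.

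For essential surjectivity, given an $\mathfrak{S}$-central multifusion 2-category $\mathcal{C}$ with central functor $\iota:\mathfrak{S}\to \mathcal{C}$, rigidity on both sides provides a right adjoint $\iota^R:\mathcal{C}\to \mathfrak{S}$. Then $\mathcal{B}:=\iota^R(\mathbf{1}_{\mathcal{C}})$ inherits a rigid algebra structure from the monad $\iota^R\iota$, and the half-braiding data carried by the central functor $\iota$ upgrade it to a braided rigid algebra in $\mathfrak{S}$. A higher Barr--Beck argument identifies $\mathcal{C}$ with $\mathbf{Mod}_{\mathfrak{S}}(\mathcal{B})$ as $\mathfrak{S}$-central multifusion 2-categories. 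Fully faithfulness on the higher layers of hom spaces then follows by running the same identification relativized to (iterated) bimodules, which is again a formal consequence of the setup of \cite{JFS}.

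The main obstacle is the coherence bookkeeping in the $4$-categorical setting: verifying that the syllepsis on $\mathfrak{S}$ indeed delivers the centrality datum on $\mathbf{Mod}_{\mathfrak{S}}(\mathcal{B})$ at all layers of coherence, and formulating a higher Barr--Beck recognition theorem robust enough to recover $\mathcal{C}$ from $\iota^R(\mathbf{1}_{\mathcal{C}})$ in the rigid braided setting. Once these two points are handled, the remainder of the argument is formal adaptation of the braided Morita framework of \cite{JFS}, using the colimit results of \cite{D8,D9} wherever existence of relative tensor products is needed.
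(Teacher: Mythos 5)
Your overall architecture --- construct the monoidal 4-functor $\mathbf{Mod}_{\mathfrak{S}}(-)$, check it lands in $\mathfrak{S}$-central multifusion 2-categories, then prove essential surjectivity by extracting an algebra from the unit under the right adjoint of the central functor --- matches the paper's, and the algebra $\iota^R(\mathbf{1}_{\mathcal{C}})$ you write down is the same object the paper calls $F^*\circ U^*(I)$. The genuine gap is exactly the step you flag but do not fill: the ``higher Barr--Beck recognition theorem'' needed to conclude $\mathcal{C}\simeq\mathbf{Mod}_{\mathfrak{S}}(\mathcal{B})$. That recognition statement \emph{is} the content of essential surjectivity, and no such monadicity theorem for monoidal 2-categories is available off the shelf, so leaving it as an ``obstacle to be handled'' leaves the proof incomplete. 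The paper circumvents it in three moves: (i) since every multifusion 2-category is Morita equivalent to a direct sum of fusion 2-categories, and every fusion 2-category is Morita equivalent to a \emph{connected} one $\mathbf{Mod}(\mathcal{A})$ with $\mathcal{A}$ a braided fusion 1-category \cite{D9}, and Morita equivalence preserves Drinfeld centers, one may assume $\mathcal{C}=\mathbf{Mod}(\mathcal{A})$; (ii) forgetting the monoidal structure, $\mathbf{Mod}(\mathcal{A})$ is a finite semisimple $\mathfrak{S}$-module 2-category, and the already-established 2-categorical Ostrik-type theorem \cite{D4,D9} produces a rigid algebra $\mathcal{B}$ in $\mathfrak{S}$ with $\mathbf{Mod}(\mathcal{A})\simeq\mathbf{Mod}_{\mathfrak{S}}(\mathcal{B})$ --- this is the established substitute for your Barr--Beck step; (iii) only then is $\mathcal{B}$ identified with $F^*\circ U^*(I)$ and endowed with a braiding, using that the monoidal functor factors through $\mathcal{Z}(\mathcal{C})$, categorifying \cite[Lemma 3.5]{DMNO}. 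Full faithfulness is likewise reduced to the one-categorical-level-down equivalence between finite semisimple module 2-categories and the Morita 3-category of rigid algebras \cite{D4,D8}, rather than to a ``relativized'' monadicity argument.

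Two smaller points. First, the centrality of the free-module functor $\mathfrak{S}\to\mathbf{Mod}_{\mathfrak{S}}(\mathcal{B})$ comes from the braiding of $\mathfrak{S}$ together with the braiding on $\mathcal{B}$; the syllepsis is rather what makes $\mathbf{Mod}_{\mathfrak{S}}(\mathcal{B})$ itself braided and what makes the assignment $\mathbf{Mod}_{\mathfrak{S}}(-)$ monoidal (via the 3-universal property of the relative 2-Deligne tensor product \cite{D10}), so your attribution of the centrality datum to the syllepsis is misplaced. Second, that $\mathbf{Mod}_{\mathfrak{S}}(\mathcal{B})$ is multifusion is not automatic from ``rigidity inherited from $\mathcal{B}$''; the paper invokes \cite{DY22} for this.
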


\begin{proof}
Let $\mathcal{B}$ be a braided rigid algebra in $\mathfrak{S}$. It was shown in \cite{DY22} that $\mathbf{Mod}_{\mathfrak{S}}(\mathcal{B})$ is a multifusion 2-category. In fact, a slight elaboration of this argument shows that $\mathbf{Mod}_{\mathfrak{S}}(\mathcal{B})$ inherits a canonical $\mathfrak{S}$-central structure. Moreover, using the 3-universal property of the relative 2-Deligne tensor product \cite{D10}, one checks at once that the assignment $\mathbf{Mod}_{\mathfrak{S}}(-)$ is monoidal (see also \cite{JF} for related results in the symmetric case).

We begin by showing that every $\mathfrak{S}$-central multifusion 2-category can be obtained in this way, that is, that the functor $\mathbf{Mod}_{\mathfrak{S}}(-)$ is essentially surjective. In fact, as every multifusion 2-category is Morita equivalent to a direct sum of fusion 2-categories, it is  enough to prove that every $\mathfrak{S}$-central fusion 2-category is in the image. To this end, let us fix an arbitrary $\mathfrak{S}$-central fusion 2-category $\mathfrak{C}$, that is, a fusion 2-category $\mathfrak{C}$ equipped with a braided functor $F:\mathfrak{S}\rightarrow\mathcal{Z}(\mathfrak{C})$. It follows from \cite{D9} that $\mathfrak{C}$ is Morita equivalent to a connected fusion 2-category $\mathbf{Mod}(\mathcal{A})$, with $\mathcal{A}$ a braided fusion 1-category. As a Morita equivalence between fusion 2-categories induces a braided equivalence at the level of Drinfeld centers, we may as well assume that $\mathfrak{C}=\mathbf{Mod}(\mathcal{A})$. Now, we may view $\mathbf{Mod}(\mathcal{A})$ as a finite semisimple $\mathfrak{S}$-module 2-category. Thanks to \cite{D4,D9}, we have that there exists a rigid algebra $\mathcal{B}$ in $\mathfrak{S}$ such that $\mathbf{Mod}(\mathcal{A})\simeq \mathbf{Mod}_{\mathfrak{S}}(\mathcal{B})$ as finite semisimple 2-categories. We claim that $\mathcal{B}$ admits a braided structure so that the equivalence $\mathbf{Mod}(\mathcal{A})\simeq \mathbf{Mod}_{\mathfrak{S}}(\mathcal{B})$ is compatible with the monoidal structures. Namely, $\mathcal{B}$ as an algebra is equivalent to $F^*\circ U^*(I)$, the image of the monoidal unit of $\mathbf{Mod}(\mathcal{A})$ under the right adjoint of the canonical monoidal functor $U\circ F:\mathfrak{S}\rightarrow \mathbf{Mod}(\mathcal{A})$ given by composing $F$ with the forgetful functor $U:\mathcal{Z}(\mathfrak{C})\rightarrow\mathfrak{C}$. The fact that this functor factors through the Drinfeld center endows $\mathcal{B}$ with the desired braiding. (This is a categorification of lemma 3.5 of \cite{DMNO}.)
This concludes the proof of the fact that $\mathbf{Mod}_{\mathfrak{S}}(-)$ is essentially surjective.

As $\mathfrak{S}$ is sylleptic, that $\mathbf{Mod}_{\mathfrak{S}}(-)$ is fully faithful follows from the fact that for any multifusion 2-category $\mathfrak{C}$ the 3-functor $\mathbf{Mod}_{\mathfrak{C}}(-)$ induces an equivalence between the 3-category of finite semisimple $\mathfrak{C}$-module categories and the Morita 3-category of rigid algebras in $\mathfrak{C}$ as was shown in \cite{D4,D8}.
\end{proof}

\noindent Physically, a braided rigid algebra in $\mathfrak{S}$ describes a 3-dimensional topological boundary of 4-dimensional condensation defects arising from $\mathfrak{S}$. The Morita equivalence on such algebras relates the different boundaries of the same condensation 4-dimensional defect. Thus, the Morita category $\mathsf{Mor}_2^{sep}(\mathfrak{S})$ describes the condensation 4-dimensional defects themselves, by first describing their boundaries and then identifying the different boundaries. This is the physical content of the equivalence
\be
\mathbf{Mod}(\Sigma\mathfrak{S})\simeq \mathsf{Mor}_2^{sep}(\mathfrak{S})
\ee

The above theorem straightforwardly leads to the following corollary:

\begin{Corollary}
The spaces $\mathscr{P}ic(\mathbf{Mod}(\mathfrak{S}))$ and $\mathscr{W}itt(\mathfrak{S})$ are equivalent as grouplike topological monoids. In particular, we have a group isomorphism
\be
Pic(\Sigma\mathfrak{S})\cong \mathcal{W}itt(\mathfrak{S}) \,.
\ee
\end{Corollary}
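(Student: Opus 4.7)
The plan is to deduce the Corollary directly from the preceding Theorem, by exploiting the fact that the assignment $\mathfrak{C}\mapsto \mathfrak{C}^{\times}$ sending a monoidal $4$-category to its Picard space is functorial with respect to monoidal equivalences. Concretely, I would first recall that $\mathscr{P}ic(\Sigma\mathfrak{S})$ was defined as $(\Sigma^2\mathfrak{S})^\times$, while $\Sigma^2\mathfrak{S}\simeq \mathbf{Mod}(\Sigma\mathfrak{S})$, so that $\mathscr{P}ic(\Sigma\mathfrak{S})$ is the Picard space of $\mathbf{Mod}(\Sigma\mathfrak{S})$ regarded as a monoidal $4$-category. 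Similarly, by definition $\mathscr{W}itt(\mathfrak{S})=\bigl(\mathsf{Mor}_2^{sep}(\mathfrak{S})\bigr)^\times$. Thus both sides of the asserted equivalence are the Picard spaces of the two monoidal $4$-categories appearing in the Theorem.

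Next I would invoke the Theorem to produce an equivalence
\be
\mathbf{Mod}_{\mathfrak{S}}(-):\ \mathsf{Mor}_2^{sep}(\mathfrak{S})\xrightarrow{\ \simeq\ }\mathbf{Mod}(\Sigma\mathfrak{S})
\ee
of monoidal $4$-categories. A monoidal equivalence carries invertible objects to invertible objects and preserves $k$-morphisms of all levels together with their composition, so restricting to the Picard subspaces yields an equivalence of grouplike topological monoids
\be
\mathbf{Mod}_{\mathfrak{S}}(-)^{\times}:\ \mathscr{W}itt(\mathfrak{S})\xrightarrow{\ \simeq\ }\mathscr{P}ic(\Sigma\mathfrak{S}).
\ee
Taking $\pi_0$ is a monoidal functor from grouplike topological monoids to abelian groups, and hence yields the claimed group isomorphism $\mathcal{W}itt(\mathfrak{S})\cong Pic(\Sigma\mathfrak{S})$.

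There is no substantive new content to establish; the only point that deserves a line of verification is that the functor $\mathbf{Mod}_{\mathfrak{S}}(-)$ of the Theorem is indeed monoidal (which was noted in its proof via the $3$-universal property of the relative $2$-Deligne tensor product) and that under this monoidal structure the unit braided algebra $\mathbf{1}_{\mathfrak{S}}$ is sent to the unit $\mathfrak{S}$-central multifusion $2$-category. Once these compatibilities are recorded, the Corollary follows formally. The only mild obstacle, if any, is bookkeeping: checking that invertibility in the Morita $4$-category $\mathsf{Mor}_2^{sep}(\mathfrak{S})$ corresponds precisely under $\mathbf{Mod}_{\mathfrak{S}}(-)$ to invertibility in $\mathbf{Mod}(\Sigma\mathfrak{S})$, which is immediate from the fact that an equivalence of monoidal $(\infty,n)$-categories always restricts to an equivalence between the respective maximal subgroupoids of invertible objects.
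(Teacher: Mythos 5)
Your argument is correct and is essentially the paper's own: the paper derives this Corollary directly from the preceding Theorem by noting that a monoidal equivalence of $4$-categories restricts to an equivalence of Picard spaces, exactly as you do. The extra bookkeeping you flag (monoidality of $\mathbf{Mod}_{\mathfrak{S}}(-)$ and preservation of units and invertibles) is the same content the paper treats as immediate from the Theorem's proof.
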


\begin{Example}
Let us unpack the above discussion in the case $\mathfrak{S} = \bVect$. This just describes the identity topological surface, which exists in any 5d theory. 

Then, $\mathsf{Mor}^{sep}_2(\bVect)$ is the symmetric monoidal 4-category of braided multifusion 1-categories considered in \cite{BJS}. Physically, $\mathsf{Mor}^{sep}_2(\bVect)$ describes 4-dimensional topological defects that must exist in every 5d theory. These are precisely 4d TFTs, which can be converted into defects in any 5d theory by simply inserting the 4d TFT along a 4d submanifold of the spacetime of the 5d theory.\footnote{The 4d and 5d systems remain decoupled.} The braided multifusion 1-categories are formed by topological line operators arising on the 3-dimensional topological boundaries of these 4-dimensional defects. Morita equivalence relates braided multifusion 1-categories that arise on different boundaries of the same 4-dimensional defect. Physically, Morita equivalence is the statement that the different boundaries are related by gauging operations.

Furthermore, recall from \cite{BJSS} that invertible objects in the Morita 4-category $\mathsf{Mor}^{sep}_2(\bVect)$ of braided multifusion 1-categories correspond exactly to non-degenerate braided fusion 1-categories. Indeed, the 4d TFT obtained by performing Crane-Yetter construction starting from a non-degenerate braided fusion 1-category is invertible.

Two non-degenerate braided fusion 1-categories are equivalent in $\mathsf{Mor}^{sep}_2(\bVect)$ if and only if they are Witt equivalent in the sense of \cite{DMNO}. Physically speaking, they are related by gauging operations, which equivalently can be phrased in terms of the existence of a topological interface between the topological boundaries carrying two non-degenerate braided fusion 1-categories. We therefore find that $\mathcal{W}itt(\mathbf{2Vect})$ is precisely the Witt group of non-degenerate braided fusion 1-categories introduced in \cite{DMNO}.

Slightly more generally, one may fix $\mathcal{E}$ a symmetric fusion 1-category, and take $\mathfrak{S} = \mathbf{Mod}(\mathcal{E})$. Physically, $\mathcal{E}$ describes topological line operators of some 5d theory, and $\mathfrak{S} = \mathbf{Mod}(\mathcal{E})$ describes the topological surface defects that can be obtained by condensing lines in $\mathcal{E}$.
Then, $\mathsf{Mor}^{sep}_2(\mathbf{Mod}(\mathcal{E}))$ is the symmetric monoidal 4-category of braided multifusion 1-categories equipped with a central symmetric functor from $\mathcal{E}$. Again the aforementioned braided multifusion 1-categories describe topological lines on topological boundaries of 4-dimensional defects that can arise by condensing surfaces in $\mathbf{Mod}(\mathcal{E})$, or equivalently, lines in $\mathcal{E}$. In this case, it is straightforward to check using the results of \cite{BJSS} that $\mathcal{W}itt(\mathbf{Mod}(\mathcal{E}))$ is precisely the generalized, or relative, Witt group from \cite{DNO}.
\end{Example}

It follows from the previous example that the invertible objects of $\mathsf{Mor}_2^{sep}(\mathfrak{S})$ may be thought of as generalizations of non-degenerate braided fusion 1-categories. In particular, when $\mathfrak{S}$ is a sylleptic strongly fusion 2-category, i.e.\ $\mathfrak{S}\cong\mathbf{2Vect}^s(A[0])$, we will see in section \ref{section:crossedbraided} that these invertible objects are given by $A$-graded braided fusion 1-categories satisfying a generalized non-degeneracy condition depending on $s$. In fact, when $s$ is trivial, the condition is merely that the braided fusion 1-category is non-degenerate in the classical sense.




\section{Extension Theory for Higher Fusion Categories}\label{section:extensiontheory}

\subsection{Brauer-Picard Spaces}
\label{sec:BPS}

\begin{figure}
    \centering
       \begin{tikzpicture}
 \begin{scope}[shift={(0,0)}] 
\draw [cyan, fill=cyan, opacity =0.2]
(0,0) -- (0,4) -- (2,5) -- (5,5) -- (5,1) -- (3,0)--(0,0);
\draw [black, thick, fill=white,opacity=1]
(0,0) -- (0, 4) -- (2, 5) -- (2,1) -- (0,0);
\draw [cyan, thick, fill=cyan, opacity=0.8]
(0,0) -- (0, 4) -- (2, 5) -- (2,1) -- (0,0);
\draw [blue, thick, fill=blue, opacity=0.4]
(0.5,2.25) -- (3.5, 2.25) -- (4.7, 2.85) -- (1.7, 2.85) -- (0.5,2.25);
\node at (1.2,5) {$\mathfrak{B}^{\text{sym}}_{\mathscr{C}}$};
\draw[dashed] (0,0) -- (3,0);
\draw[dashed] (0,4) -- (3,4);
\draw[dashed] (2,5) -- (5,5);
\draw[dashed] (2,1) -- (5,1);
\draw [blue,line width = 0.05cm] (0.5, 2.25) -- (1.7, 2.85) ;
\draw [black,line width = 0.05cm] (0.5, 1.25) -- (1.7, 1.85) ;
\draw [black,line width = 0.05cm] (0.5, 3.25) -- (1.7, 3.85) ;
\draw [black]
(3,0) -- (3, 4) -- (5, 5) -- (5,1) -- (3,0);
\node[black, left] at (-1.3, 3.4) {$D_3\in\mathscr{C}$};
\draw[->] (-1.3,3.4) to [out=30,in=150]  (0.7,3.4);
\node[blue] at (2.6, 2.5) {$D_4$};
\draw[->] (-1.3,2.4) to [out=30,in=150]  (0.7,2.4);
\node[blue, left] at (-1.3, 2.4) {$D_3 \in\mathbf{B}(D_4)\in \text{\bf Bimod}(\mathscr{C})$};
\node[black, left] at (-1.3, 1.4) {$D_3\in\mathscr{C}$};
\draw[->] (-1.3,1.4) to [out=30,in=150]  (0.7,1.4);
\node at (2.6, 4.2) {$\mathfrak{Z}(\mathscr{C})$};
\end{scope}
\end{tikzpicture}
    \caption{SymTFT $\mathfrak{Z}(\mathscr{C})$ for the symmetry $\mathscr{C}$, which is realized on the  symmetry boundary $\mathfrak{B}^{\text{sym}}_{\mathscr{C}}$. A 4-dimensional topological defect $D_4$ of the SymTFT is characterized by a collection of topological defects $D_3$ that can arise at an end of $D_4$ along the symmetry boundary. These ends form a 3-category $\mathbf{B}(D_4)$ that is acted upon by genuine topological defects living in fusion 3-category $\mathscr{C}$ on the symmetry boundary. Such defects act from both the left and the right and hence $\mathbf{B}(D_4)$ is a bimodule 3-category over $\mathscr{C}$. All the possible bimodule 3-categories over $\mathscr{C}$ form a fusion 4-category $\text{\bf Bimod}(\mathscr{C})$, which can be identified with the fusion 4-category formed by topological defects of the 5d SymTFT.
    \label{fig:Almi}}
\end{figure}

Let $\mathscr{C}$ be a fusion 3-category. In this section, we will keep in mind a specific physical setup where $\mathscr{C}$ makes an appearance. The physical setup that of the SymTFT $\mathfrak{Z}(\mathscr{C})$ associated to $\mathscr{C}$ viewed as a symmetry of 4d QFTs. The SymTFT is a 5d TFT whose topological defects form the fusion 4-category $\Sigma\cZ(\mathscr{C})$. Such a TFT admits topological boundary conditions such that the topological defects living on such a boundary form the fusion 3-category $\mathscr{C}$. We pick such a boundary condition $\mathfrak{B}^{\text{sym}}_\mathscr{C}$ of $\mathfrak{Z}(\mathscr{C})$. See figure \ref{fig:Almi}.

We now consider the fusion 4-category $\mathbf{Bimod}(\mathscr{C})$ of $\mathscr{C}$-bimodule 3-categories. Physically, a $\mathscr{C}$-bimodule 3-category describes the 3-dimensional topological ends along $\mathfrak{B}^{\text{sym}}_\mathscr{C}$ of a 4-dimensional topological defect in the SymTFT $\mathfrak{Z}(\mathscr{C})$. See figure \ref{fig:Almi}. 
In fact, the bimodule 3-category completely characterizes this 4-dimensional topological defect, and the fusion 4-category $\mathbf{Bimod}(\mathscr{C})$ can be identified with the fusion 4-category formed by topological defects of the 5d TFT $\mathfrak{Z}(\mathscr{C})$:
\be\label{f1}
\mathbf{Bimod}(\mathscr{C})\simeq\mathbf{Mod}\big(\cZ(\mathscr{C})\big)
\ee
The \textbf{Brauer-Picard space} $\mathscr{B}r\mathscr{P}ic(\mathscr{C})$ of $\mathscr{C}$ consists of \textit{invertible} $\mathscr{C}$-bimodule 3-categories together with invertible bimodule higher morphisms. We define 
\be
BrPic(\mathscr{C}):=\pi_0(\mathscr{B}r\mathscr{P}ic(\mathscr{C}))
\ee
the \textbf{Brauer-Picard group} of $\mathscr{C}$. Physically, the group $BrPic(\mathscr{C})$ is the 0-form symmetry group of the 5d TFT $\mathfrak{Z}(\mathscr{C})$. Applying \eqref{f1} to the invertible objects, we obtain a group isomorphism
\be
BrPic(\mathscr{C})\cong Pic\big(\cZ(\mathscr{C})\big)
\ee

The homotopy groups of $\mathscr{B}r\mathscr{P}ic(\mathscr{C})$ are given as follows: 
\be 
\begin{tabular}{|c|c|c|c|c|c|}
\hline
$\pi_0$ & $\pi_1$ & $\pi_2$ & $\pi_3$ & $\pi_4$\\
\hline \\[-1em]
$BrPic(\mathscr{C})$ & $Inv(\mathcal{Z}(\mathscr{C}))$ & $Inv(\Omega\mathcal{Z}(\mathscr{C}))$ & $Inv(\Omega^2\mathcal{Z}(\mathscr{C}))$ & $\mathbb{C}^{\times}$\\
\hline
\end{tabular}
\ee

\noindent The space $\mathscr{B}r\mathscr{P}ic(\mathscr{C})$ carries a group-like monoid structure given by the relative tensor product of corresponding bimodule 3-categories over $\mathscr{C}$. In particular, we can consider its delooping $\mathrm{B}\mathscr{B}r\mathscr{P}ic(\mathscr{C})$. Physically, the homotopy group $\pi_i$ captures invertible topological defects of codimension-$(i+1)$ in the 5d TFT $\mathfrak{Z}(\mathscr{C})$.

For our purposes, we will focus on the case $\mathscr{C}:=\mathbf{3Vect}(A[1])$, with $A$ a finite abelian group. Let us spell out the higher homotopy groups of the associated Brauer-Picard space.

\begin{Proposition}\label{prop:BrPic3VectA[1]}
The Brauer-Picard space of $\mathbf{3Vect}(A[1])$, has homotopy groups given by: 
\be 
\begin{tabular}{|c|c|c|c|c|c|}
\hline
$\pi_0$ & $\pi_1$ & $\pi_2$ & $\pi_3$ & $\pi_4$\\
\hline \\[-1em]
$BrPic(\cVect(A[1]))$ & $0$ & $A\oplus \widehat{A}$ & $0$ & $\mathbb{C}^{\times}$\\
\hline
\end{tabular}
\ee
\noindent Moreover, the second $k$-invariant of the Brauer-Picard space of $\mathbf{3Vect}(A[1])$ is the class of $\varsigma$ in $H^6((A\oplus \widehat{A})[3];\mathbb{C}^{\times})$ corresponding to the canonical  sylleptic form on $A\oplus \widehat{A}$.
\end{Proposition}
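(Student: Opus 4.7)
The plan is to exploit the equivalence $\mathscr{B}r\mathscr{P}ic(\mathscr{C}) \simeq \mathscr{P}ic(\mathcal{Z}(\mathscr{C}))$ established in the preceding material and to reduce all computations to the sylleptic strongly fusion 2-category $\mathfrak{S} := \mathbf{2Vect}^\varsigma((A\oplus\widehat{A})[0])$, using the identification $\mathcal{Z}(\mathbf{3Vect}(A[1])) \simeq \Sigma\mathfrak{S}$ from equation (\ref{Grumpy}). The task then splits into two parts: computing the homotopy groups and identifying the $k$-invariant.

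For the homotopy groups, I would first use the general formula $\pi_i\big(\mathscr{B}r\mathscr{P}ic(\mathscr{C})\big) = \mathrm{Inv}(\Omega^{i-1}\mathcal{Z}(\mathscr{C}))$ for $1 \le i \le 3$ (with $\pi_4 = \mathbb{C}^\times$) recalled just above in Section \ref{sec:BPS}, and then iterate $\Omega$ on $\Sigma\mathfrak{S}$. Because $\Sigma\mathfrak{S}$ is connected (which in turn follows from the existence of a fiber 3-functor since the braiding of $\mathfrak{S}$ is trivial), its only invertible object up to equivalence is the unit, giving $\pi_1 = 0$. Next, $\Omega\Sigma\mathfrak{S} \simeq \mathfrak{S}$, and since $\mathfrak{S}$ is strongly fusion, its equivalence classes of simple objects form the group $A\oplus\widehat{A}$ under tensor product, all of which are invertible; this yields $\pi_2 = A\oplus\widehat{A}$. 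Then $\Omega\mathfrak{S} \simeq \mathbf{Vect}$, whose only invertible object up to isomorphism is the unit, so $\pi_3 = 0$. Finally $\pi_4 = \mathbb{C}^\times$ is automatic.

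For the $k$-invariant, I would argue as follows. After looping off the (possibly nontrivial) $\pi_0$, the identity component of the delooping $\mathrm{B}\mathscr{B}r\mathscr{P}ic(\mathscr{C})$ is a 2-connected space with $\pi_3 = A\oplus\widehat{A}$ and $\pi_5 = \mathbb{C}^\times$, so its Postnikov tower has a single non-trivial $k$-invariant living precisely in $H^6(K(A\oplus\widehat{A},3);\mathbb{C}^\times) = H^6((A\oplus\widehat{A})[3];\mathbb{C}^\times)$, the group classifying syllepses on $\mathbf{2Vect}((A\oplus\widehat{A})[0])$ by the lemma of Section \ref{subsection:sylleptic}. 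The Picard construction $\mathfrak{S} \mapsto \mathscr{P}ic(\Sigma\mathfrak{S})$ is functorial and takes sylleptic equivalences to equivalences of grouplike $E_3$-spaces, so it induces a map from the moduli of sylleptic structures on $\mathbf{2Vect}((A\oplus\widehat{A})[0])$ to the moduli of such $k$-invariants. Comparing the tautological universal sylleptic structure with the universal $k$-invariant (for instance by matching cocycle representatives on a skeletal model, or by recognizing $\mathsf{Alt}(\varsigma)(a,\lambda;b,\mu) = \lambda(b)/\mu(a)$ as the obstruction to commuting the relevant invertible 2-morphisms at the level of linking), one concludes that the two cohomology classes coincide.

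The main obstacle will be the last step: making precise the identification of the syllepsis $\varsigma$ with the Postnikov $k$-invariant. Unpacking the $E_3$-structure on $\mathscr{P}ic(\Sigma\mathfrak{S})$ and computing the corresponding cocycle at the level of skeleta is genuinely intricate, and one must be careful not to confuse the sylleptic cocycle with a braiding cocycle (which would live in $H^5((A\oplus\widehat{A})[2];\mathbb{C}^\times)$). The cleanest route is probably to reduce to the universal case via naturality: build a sylleptic fusion 2-category with the tautological class, compute the $k$-invariant of the associated Brauer-Picard space explicitly, and then transport along the classifying map, using the canonical form $\mathsf{Alt}(\varsigma)(a,\lambda;b,\mu) = \lambda(b)/\mu(a)$ as the final anchor.
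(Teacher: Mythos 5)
Your overall strategy coincides with the paper's: both reduce everything to the identification $\mathcal{Z}(\mathbf{3Vect}(A[1]))\simeq \Sigma\mathbf{2Vect}^{\varsigma}((A\oplus\widehat{A})[0])$, read off $\pi_2,\pi_3,\pi_4$ from $\Omega\mathcal{Z}$ and $\Omega^2\mathcal{Z}$, and extract the $k$-invariant from the sylleptic structure. Your computations of $\pi_2$, $\pi_3$, $\pi_4$ are correct.

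There is, however, a genuine gap in your argument for $\pi_1=0$. You assert that because $\Sigma\mathfrak{S}$ is connected, "its only invertible object up to equivalence is the unit." This inference is false in general: connectedness of a fusion $n$-category says only that $\pi_0$ is a singleton, and says nothing about invertible objects. For instance, $\Sigma\mathcal{B}=\mathbf{Mod}(\mathcal{B})$ for a braided fusion 1-category $\mathcal{B}$ is always connected, yet its invertible objects form the Picard group $Pic(\mathcal{B})$, which is typically non-trivial (e.g.\ for $\mathcal{B}=\mathbf{Rep}(G)$). The correct argument, which is the one the paper gives, is that $\pi_1=Inv(\mathcal{Z}(\mathbf{3Vect}(A[1])))$ unpacks to the set of Morita invertible (graded) fusion 1-categories, and the only Morita invertible fusion 1-category over an algebraically closed field of characteristic zero is $\mathbf{Vect}$ (Morita invertibility forces the Drinfeld center to be trivial). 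You need this, or an equivalent argument, in place of the appeal to connectedness.

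On the $k$-invariant: you correctly locate it in $H^6((A\oplus\widehat{A})[3];\mathbb{C}^{\times})$ and correctly identify this group with the classifying group for sylleptic structures, but you explicitly leave the identification with $\varsigma$ as an unresolved "main obstacle." The paper closes this by a slightly different packaging: it establishes the equivalence $\Omega\mathcal{Z}(\Sigma\mathbf{2Vect}(A[0]))\simeq \mathcal{Z}_{(2)}(\mathbf{2Vect}(A[0]))\simeq \mathbf{2Vect}^{\varsigma}((A\oplus\widehat{A})[0])$ as an equivalence of \emph{sylleptic} fusion 2-categories (the second step being a direct computation of the double centralizer), after which the classification of sylleptic strongly fusion 2-categories by $H^6(B[3];\mathbb{C}^{\times})$ immediately identifies the second Postnikov class with $\varsigma$. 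Routing the comparison through the sylleptic structure on $\Omega\mathcal{Z}$, rather than through an $E_3$-structure on the Picard space of $\Sigma\mathfrak{S}$, is what makes the cocycle-matching you were worried about unnecessary.
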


\begin{proof}
For the homotopy groups $\pi_{\geq 1}$, recall that we have $\mathbf{3Vect}(A[1])\simeq \Sigma\mathbf{2Vect}(A[0])$. The first equivalence of sylleptic fusion 2-categories below then follows from \cite[section IV.B]{JF}: 
\be 
\Omega\mathcal{Z}(\Sigma\mathbf{2Vect}(A[0]))\simeq \mathcal{Z}_{(2)}(\mathbf{2Vect}(A[0]))\simeq \mathbf{2Vect}^\varsigma((A\oplus \widehat{A})[0]).
\ee 
The second equivalence follows by direct calculation. This yields the desired description of $\pi_{\geq 2}$, as well as that of the second Postnikov class. It therefore only remains to argue that $\pi_1$ vanishes. Categorically speaking this unpacks to the statement that the only Morita invertible $A$-graded fusion 1-category is $\mathbf{Vect}$. But, it is well-known that there is a unique Morita invertible fusion 1-category (as we are working over an algebraically closed field of characteristic zero), and this concludes the proof.
\end{proof}

Let us now fix a finite group $G$. The space $\mathscr{B}r\mathscr{P}ic(\mathscr{C})$ is related to faithfully $G$-graded extensions of $\mathscr{C}$ via the following general result, which also works for fusion $n$-category with $n$ arbitrary.\footnote{A much more general version of this result was announced in \cite{JFR3}. We refer the reader to \cite{ENO2} for the classical version of this result in the theory of fusion 1-categories. A variant for fusion 2-categories was obtained in \cite{D11}, and the proof therein can be adapted to accommodate for fusion 3-categories.}

\begin{Theorem}
\label{thm:alpha}
There is a bijection between faithfully $G$-graded extensions of $\mathscr{C}$ and homotopy classes of maps 
\be
\mathrm{B}G\rightarrow \mathrm{B}\mathscr{B}r\mathscr{P}ic(\mathscr{C}).
\ee
\end{Theorem}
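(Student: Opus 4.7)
The plan is to directly identify the data of a faithfully $G$-graded extension of $\mathscr{C}$ with the data of a pointed functor of 4-groupoids $\mathrm{B}G \to \mathrm{B}\mathscr{B}r\mathscr{P}ic(\mathscr{C})$, following the strategy of \cite{ENO2} for fusion 1-categories and its elaboration for fusion 2-categories in \cite{D11}. Recall that, viewed as a 4-groupoid, $\mathrm{B}G$ has a single object, $1$-morphisms given by the elements of $G$ with composition the group multiplication, and identity higher morphisms. Thus a pointed functor $\Phi : \mathrm{B}G \to \mathrm{B}\mathscr{B}r\mathscr{P}ic(\mathscr{C})$ is precisely a choice of:\ an invertible $\mathscr{C}$-bimodule 3-category $\Phi(g) =: \mathscr{D}_g$ for each $g\in G$, an invertible bimodule equivalence $\mathscr{D}_g \boxtimes_\mathscr{C} \mathscr{D}_h \simeq \mathscr{D}_{gh}$ for each pair, a coherence 3-isomorphism for each triple, a coherence 4-isomorphism for each quadruple, and an equation of 4-isomorphisms for each quintuple. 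These are exactly the data of tensor, associator, pentagonator, and higher coherences of a faithfully $G$-graded fusion 3-category with trivially graded piece $\mathscr{C}$.

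For the forward direction, given a faithful $G$-graded extension $\mathscr{D} = \bigoplus_{g\in G} \mathscr{D}_g$ with $\mathscr{D}_e = \mathscr{C}$, I would check that each homogeneous component $\mathscr{D}_g$ is invertible as a $\mathscr{C}$-bimodule 3-category:\ the dual with respect to the rigid structure on $\mathscr{D}$ lies in $\mathscr{D}_{g^{-1}}$, and the counit/unit of the duality, together with faithfulness of the grading, force $\mathscr{D}_g \boxtimes_\mathscr{C} \mathscr{D}_{g^{-1}} \simeq \mathscr{C}$ (this step uses \cite{D8,D9} to realize relative Deligne tensor products inside $\mathscr{D}$). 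The restriction of the tensor product of $\mathscr{D}$ to homogeneous components then assembles, together with the associator and its coherences, into a pointed functor $\Phi_\mathscr{D}$ of the desired type.

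For the reverse direction, given $\Phi : \mathrm{B}G \to \mathrm{B}\mathscr{B}r\mathscr{P}ic(\mathscr{C})$, I would set $\mathscr{D}_\Phi := \bigoplus_g \Phi(g)$, use the 2-morphism data of $\Phi$ to define the tensor product on graded pieces, and use the 3-, 4-, and 5-morphism data of $\Phi$ to supply the associator, pentagonator, and its coherence equation. Invertibility of each $\Phi(g)$ provides the left and right duals, so that $\mathscr{D}_\Phi$ is a fusion 3-category with faithful grading by $G$. Two naturally isomorphic functors $\Phi \simeq \Phi'$ yield an equivalence of extensions (using the 2-morphism data of the natural isomorphism to construct the monoidal equivalence, and the higher data to supply its coherences), and conversely, so the two assignments are mutually inverse on equivalence classes.

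The main technical obstacle is the careful assembly of the coherence data at every level; organizing it by hand amounts to repeating the classical cocycle computations of \cite{ENO2} at higher levels, which becomes unwieldy. I would therefore package the argument via the Postnikov tower of $\mathrm{B}\mathscr{B}r\mathscr{P}ic(\mathscr{C})$:\ at each stage $\tau_{\leq n}\mathrm{B}\mathscr{B}r\mathscr{P}ic(\mathscr{C}) \to \tau_{\leq n+1}\mathrm{B}\mathscr{B}r\mathscr{P}ic(\mathscr{C})$ the obstruction to lifting a given map out of $\mathrm{B}G$ lies in $H^{n+2}(G;\pi_{n+1})$ and the set of lifts is a torsor over $H^{n+1}(G;\pi_{n+1})$, with the homotopy groups given by Proposition \ref{prop:BrPic3VectA[1]} in our case of interest. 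This matches exactly the obstruction-theoretic description of building up a $G$-graded extension (choice of invertible bimodule categories, then associator, pentagonator, and its coherence), so each lifting step on the two sides is controlled by the same cohomology group, yielding the bijection stage by stage. Alternatively, one can invoke the general higher-categorical result announced in \cite{JFR3} that such extensions are classified by maps into the appropriate Brauer-Picard space, avoiding an explicit Postnikov analysis.
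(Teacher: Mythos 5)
The paper gives no proof of this theorem: it only records, in a footnote, that a more general version was announced in \cite{JFR3}, that the fusion 1-category case is in \cite{ENO2}, and that the fusion 2-category proof of \cite{D11} "can be adapted." Your proposal is a reasonable sketch of exactly that strategy --- matching the delooped Brauer-Picard coherence data with the graded tensor structure and organizing the lifting problem by the Postnikov tower, with the fallback of invoking \cite{JFR3} --- so it takes essentially the same route the paper points to.
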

Physically, such a map picks a $G$ 0-form symmetry of the 5d SymTFT $\mathfrak{Z}(\mathscr{C})$. Note that not only do we pick topological codimension-1 defects generating the $G$ 0-form symmetry, but we also pick possible intersections between such defects, thus picking a way to couple the background fields for the 0-form symmetry to the underlying theory $\mathfrak{Z}(\mathscr{C})$. By the above theorem, picking such a 0-form symmetry is equivalent to picking a $G$-graded fusion 3-category whose trivial grade is the fusion 3-category $\mathscr{C}$. The topological defects in grade $g\in G$ arise at the end of the codimension-1 topological defect $\mathbf{D}_4^g$ of $\mathfrak{Z}(\mathscr{C})$ generating the 0-form symmetry, along the symmetry boundary $\mathfrak{B}^{\text{sym}}_\mathscr{C}$.

Using Proposition \ref{prop:BrPic3VectA[1]}, it follows that in the specific case $\mathscr{C}=\mathbf{3Vect}(A[1])$, homotopy classes of maps can be described quite explicitly using classical obstruction theory from algebraic topology. Namely, they consist of the following data and conditions:
\begin{enumerate}
    \item A homomorphism of groups $G\rightarrow BrPic(\cVect(A[1]))$ such that a certain obstruction class in $\Theta\in H^4(G[1];A\oplus \widehat{A})$ vanishes. The vanishing of obstruction class $\Theta$ ensures that it does not form a non-trivial 3-group with the $A\oplus\widehat{A}$ 2-form symmetry of $\mathfrak{Z}(\mathscr{C})$. Recall that the 3-group can be expressed in terms of the background fields by a condition of the form
    \be
    \delta B_3=A_1^*\Theta
    \ee
    where $B_3$ is background field for 2-form symmetry and $A_1^*\Theta$ is the pullback to spacetime of the ``Postnikov class'' $\Theta$.
    \item A class $\alpha$ in (a torsor over) $H^3(G[1];A\oplus\widehat{A})$ such that a certain obstruction class in $H^6(G[1];\mathbb{C}^{\times})$ vanishes. 
    Physically, this is the symmetry fractionalization class, and the above requirement ensures that the $G$ 0-form symmetry does not carry any 't Hooft anomaly.
    \item A class $\tau$ in (a torsor over) $H^5(G[1];\mathbb{C}^{\times})$. Physically this can be understood as a choice of SPT phase for $G$ 0-form symmetry.
\end{enumerate}
\noindent The groups $H^3(G[1];A\oplus\widehat{A})$ and $H^4(G[1];A\oplus\widehat{A})$ above are twisted cohomology groups because the data $G\rightarrow BrPic(\cVect(A[1]))$ provides us with an action of $G$ on $A\oplus\widehat{A}\cong Inv(\Omega\mathcal{Z}(\mathbf{3Vect}(A[1])))$ via postcomposition with the canonical map $BrPic(\cVect(A[1]))\rightarrow Aut^{br}(\mathcal{Z}(\mathbf{3Vect}(A[1])))$.\footnote{This map is given by the conjugation action. For fusion 1-categories, this is explained in detail in \cite[section 8.2]{ENO2}.}

\begin{Remark}\label{rem:BrPicandWitt}
We give a more explicit description of $BrPic(\cVect(A[1]))$ in  section \ref{section:crossedbraided}. 
\end{Remark}

In general, understanding the above data can be quite challenging. The main problem being that it is very difficult to compute explicitly the obstructions. As in lower categorical dimensions \cite{ENO2,D11}, we will focus on cases when the groups $H^4(G[1];A\oplus \widehat{A})$ and $H^6(G[1];\mathbb{C}^{\times})$ vanish. For instance, the group $H^6(G[1];\mathbb{C}^{\times})$ is trivial provided that $G$ is a finite cyclic group. Further, the group $H^4(G[1];A\oplus \widehat{A})$ vanishes if $G=\mathbb{Z}/2$, and the induced action on $A\oplus \widehat{A}$ swaps the two factors.

\subsection{3-Categories and Duality Symmetry}\label{subsection:3Catduality}

In this subsection we will study the homomorphisms described in Theorem \ref{thm:alpha}, for the groups 
\be
G\in\{\mathbb{Z}/2,\mathbb{Z}/4\}\,,
\ee
which are relevant for 3-categorical duality symmetries of (3+1)d quantum field theories.

\subsubsection{Extensions with $G= \Z/2$: $\mathbf{3TY}$-Categories}
We first begin with the case $G=\mathbb{Z}/2$ and consider $\Z/2$-graded extensions of fusion 3-categories. Tambara-Yamagami 3-categories are examples of such fusion 3-categories:
\begin{Definition}
A Tambara-Yamagami 3-category is a faithfully $\mathbb{Z}/2$-graded fusion 3-category $\mathscr{C}:=\mathscr{C}_0\boxplus\mathscr{C}_1$ such that \begin{enumerate}
    \item We have $\mathscr{C}_0\simeq \mathbf{3Vect}(A[1])$ for some finite abelian group $A$.
    \item The finite semisimple 3-category $\mathscr{C}_1$ is connected i.e.\ $\pi_0(\mathscr{C}_1)$ is a singleton, and invertible as a $\mathscr{C}_0$-bimodule.
\end{enumerate}
\end{Definition}

\begin{Definition}
An extended Tambara-Yamagami 3-category is a faithfully $\mathbb{Z}/4$-graded fusion 3-category $\mathscr{C}:=\mathscr{C}_0\boxplus\mathscr{C}_1 \boxplus\mathscr{C}_2 \boxplus\mathscr{C}_3 $ such that \begin{enumerate}
    \item We have $\mathscr{C}_0\simeq \mathbf{3Vect}(A[1])$ for some finite abelian group $A$.
    \item The finite semisimple 3-categories $\mathscr{C}_1$ and $\mathscr{C}_3$ are connected.
\end{enumerate}
\end{Definition}

We now motivate the above definition of a Tambara Yamagami 3-category. The first axiom above categorifies (twice) the condition that the $0$-graded part of a Tambara-Yamagami 1-category is of the form $\mathbf{Vect}(A[0])$. More generally, one could require that $\mathscr{C}_0\simeq \mathbf{3Vect}(\mathfrak{G})$ for some finite 3-group $\mathfrak{G}$. However, as for the case of fusion 2-categories studied in detail in \cite{DY23}, this imposes strong conditions on the structure of the 3-group $\mathfrak{G}$, and the most interesting case is $\mathfrak{G}=A[1]$ as we will consider. As for the second axiom, it categorifies the statement that the $1$-graded part of a Tambara-Yamagami 1-category is of the form $\mathbf{Vect}$. But $\mathbf{Vect}$ is the simple object of $\mathbf{2Vect}$, so that, when generalizing to fusion 3-categories, it therefore makes sense to consider any simple object of $\mathbf{4Vect}$, i.e.\ any connected finite semisimple 3-category. In fact, it follows from \cite{D9} that every connected finite semisimple 3-category is of the form $\Sigma^2\mathcal{B}$ for some braided fusion 1-category $\mathcal{B}$, that is, is twice connected. We will use this fact repeatedly below.


The possibilities for $\mathscr{C}_1$ are parametrized by finite semisimple $\cVect(A[1])$-bimodule 3-categories which are invertible and in particular have order 2. We now present two lemmas regarding such invertible 3-categories:

\begin{Lemma}
An invertible $\mathbf{3Vect}(A[1])$-bimodule structure on $\Sigma^2\mathcal{B}$ exists if $\mathcal{B}$ is non-degenerate.
\end{Lemma}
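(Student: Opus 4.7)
The plan is to recast the question as one about invertible braided algebras via two layers of equivalence. The end of Section \ref{sec:BPS} establishes $BrPic(\cVect(A[1])) \cong Pic(\cZ(\cVect(A[1])))$, which together with the identification $\cZ(\cVect(A[1])) \simeq \Sigma\mathfrak{S}$ for $\mathfrak{S} := \bVect^\varsigma((A\oplus\widehat{A})[0])$ from equation \eqref{Grumpy}, and the Picard--Witt equivalence of Section \ref{sec:HigherMorita}, reduces the problem to the following: produce a Witt-invertible braided rigid algebra $\mathcal{A}$ in $\mathfrak{S}$ whose associated module 3-category $\Sigma\Mod_{\mathfrak{S}}(\mathcal{A})$ is equivalent to $\Sigma^2\cB$.

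Given $\cB$ non-degenerate, I would construct $\mathcal{A}$ as a faithfully $A\oplus\widehat{A}$-graded braided algebra obtained from $\cB$ and a Lagrangian datum for the syllepsis $\varsigma$. Concretely, $\mathcal{A}$ should be built as a twisted Deligne tensor product of $\cB$ with a pointed braided fusion 1-category realizing a Lagrangian subgroup $L \subset A \oplus \widehat{A}$ (for instance $L = A$ or $L = \widehat{A}$), with the twist encoding the restriction of $\varsigma$. Faithfulness of the grading ensures that $\Mod_{\mathfrak{S}}(\mathcal{A})$ is a fusion (rather than merely multifusion) 2-category, so that $\Sigma\Mod_{\mathfrak{S}}(\mathcal{A})$ is connected. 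Witt-invertibility of $\mathcal{A}$ should combine the BJSS result that $\cB$ non-degenerate is Witt-invertible in $\bVect$ with the invertibility of the pointed Lagrangian piece, which follows from the non-degeneracy of $\varsigma$ restricted to a Lagrangian complement.

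The principal obstacle is verifying that the underlying 3-category $\Sigma\Mod_{\mathfrak{S}}(\mathcal{A})$ is genuinely $\Sigma^2\cB$, rather than a strictly larger 3-category carrying residual components from the $A\oplus\widehat{A}$-grading. The crux is that Morita-reducing $\mathcal{A}$ via its Lagrangian piece should collapse the grading back to the trivially graded representative $\cB$, so that $\Mod_{\mathfrak{S}}(\mathcal{A})$ is Morita equivalent to $\Sigma\cB$, yielding $\Sigma^2\cB$ after delooping. This step is the higher categorical analogue of the classical Tambara--Yamagami fact that the invertible bimodule structure on $\mathbf{Vect}$ over $\mathbf{Vect}(A[0])$ is controlled by a non-degenerate bicharacter on $A$, which supplies precisely the Lagrangian datum via an isomorphism $A \cong \widehat{A}$. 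The technical content of making this precise lies in tracking the syllepsis $\varsigma$ through the twisted Deligne product and through the condensation of the Lagrangian subalgebra.
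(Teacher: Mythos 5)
Your route through the bulk runs into two problems, and the first is fatal as written. Under the equivalence $\mathbf{Bimod}(\mathscr{C})\simeq\mathbf{Mod}(\mathcal{Z}(\mathscr{C}))$, a $\mathscr{C}$-bimodule 3-category $\mathscr{M}$ is sent to its relative center $Hom_{\mathscr{C}\text{-}\mathscr{C}}(\mathscr{C},\mathscr{M})\cong\mathcal{Z}_{\mathscr{C}}(\mathscr{M})$, not to $\mathscr{M}$ itself. So producing a Witt-invertible braided algebra $\mathcal{A}$ in $\mathfrak{S}$ with $\Sigma\mathbf{Mod}_{\mathfrak{S}}(\mathcal{A})\simeq\Sigma^2\mathcal{B}$ is not the same as producing an invertible bimodule whose \emph{underlying} 3-category is $\Sigma^2\mathcal{B}$, which is what the lemma asks for. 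Already for the unit the two underlying 3-categories disagree: the unit bimodule is $\mathbf{3Vect}(A[1])$ while the unit $\mathcal{Z}(\mathscr{C})$-module is $\mathcal{Z}(\mathscr{C})\simeq\Sigma\mathbf{2Vect}^{\varsigma}((A\oplus\widehat{A})[0])$. Second, even after correcting the reduction, the step you yourself flag as the principal obstacle --- identifying the underlying 3-category of the bimodule attached to $\mathcal{A}$ --- is precisely the explicit comparison between $\mathcal{W}itt(A\oplus A,\varsigma)$ and $BrPic(\mathbf{3Vect}(A[1]))$ that section \ref{section:outlook} records as something the authors could not carry out; asserting that condensing the Lagrangian piece ``should'' collapse the grading is exactly the missing content, not a proof of it.

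The paper's own argument never leaves the boundary and is considerably more direct. Starting from a left module structure on $\Sigma^2\mathcal{B}$, a bimodule extension is a monoidal 3-functor $\mathbf{3Vect}(A[1])\rightarrow\mathbf{End}_{\mathbf{3Vect}(A[1])}(\Sigma^2\mathcal{B})$, and invertibility of the bimodule is the statement that this functor is an equivalence. One then computes the target: it is connected, and its loop braided fusion 2-category is the centralizer $\mathcal{Z}(\Sigma\mathcal{B},\mathbf{2Vect}(A[0]))$ of the image of $\mathbf{2Vect}(A[0])$ in $\mathcal{Z}(\Sigma\mathcal{B})$. Non-degeneracy of $\mathcal{B}$ enters only here: $\mathcal{Z}_{(2)}(\mathcal{B})\simeq\mathbf{Vect}$ forces $\mathcal{Z}(\Sigma\mathcal{B})\simeq\mathbf{2Vect}$, so the centralizer is $\mathbf{2Vect}(A[0])$ and $\mathbf{End}_{\mathbf{3Vect}(A[1])}(\Sigma^2\mathcal{B})\simeq\mathbf{3Vect}(A[1])$, whence an equivalence exists and the lemma follows. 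If you wish to salvage the bulk approach, you would first have to make the Brauer--Picard/Picard comparison explicit, which is strictly harder than the statement you are trying to prove.
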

\begin{proof}
Let $\mathcal{B}$ be a non-degenerate braided fusion 1-category. Assume that the finite semisimple 3-category $\Sigma^2\mathcal{B}$ is equipped with a left $\mathbf{3Vect}(A[1])$-module structure. Upgrading this module structure to a bimodule one is equivalent to supplying a monoidal functor 
\be
\mathbf{3Vect}(A[1])\rightarrow \mathbf{End}_{\mathbf{3Vect}(A[1])}(\Sigma^2\mathcal{B})\,.
\ee
Moreover, the corresponding bimodule is invertible if and only if this monoidal functor is an equivalence. It is therefore enough to show that 
\be
\mathbf{End}_{\mathbf{3Vect}(A[1])}(\Sigma^2\mathcal{B})\simeq \mathbf{3Vect}(A[1]) \,.
\ee
In fact, the left-hand side is connected, so that it is enough to identify the associated braided fusion 2-category with $\mathbf{2Vect}(A[0])$. Now, it follows from a 3-categorical analogue of \cite[Lemma 3.2.1]{D9} that 
\be
\Omega\mathbf{End}_{\mathbf{3Vect}(A[1])}(\Sigma^2\mathcal{B})\simeq \mathcal{Z}(\Sigma\mathcal{B},\mathbf{2Vect}(A[0]))\,.
\ee
the centralizer of the image of $\mathbf{2Vect}(A[0])$ in $\mathcal{Z}(\Sigma\mathcal{B})\simeq \Omega\mathbf{End}(\Sigma^2\mathcal{B})$. But, we have $\mathcal{Z}_{(2)}(\mathcal{B})\simeq \mathbf{Vect}$, then $\mathcal{Z}(\Sigma\mathcal{B})\simeq \mathbf{2Vect}$ by \cite[Lemma 2.16]{JFR}. This implies that $\mathcal{Z}(\Sigma\mathcal{B},\mathbf{2Vect}(A[0]))\simeq \mathbf{2Vect}(A[0])$, which proves the claim, and completes the proof of the lemma. 
\end{proof}

The decategorified version of the next result is given in \cite[section 9.2]{ENO2}, by which our proofs are inspired. In terms of notations, let $q$ be a quadratic form on $A\times A$, that is $q:A\times A\rightarrow \mathbb{C}^{\times}$ is a function such that $q(-a,-b) = q(a,b)$ and the associated symmetric function 
\be
\mathsf{Bil}(q)((a,b),(c,d)):=\frac{q(a+c,b+d)}{q(a,b)q(c,d)}
\ee
is bilinear. Of particular interest to us is the (not necessarily symmetric) bilinear form $\mathsf{Bil}(q)_{12}(a,b):= \mathsf{Bil}(q)((a,0),(0,b))$.

\begin{Lemma}\label{lem:quadform}
Let $\mathcal{B}$ be a non-degenerate braided fusion 1-category. Invertible $\mathbf{3Vect}(A[1])$-bimodule structures on $\Sigma^2\mathcal{B}$ are classified by a subset of classes in $H^4((A\oplus A)[2];\mathbb{C}^{\times})$. We can identify such classes with quadratic forms $q:A\times A\rightarrow \mathbb{C}^{\times}$. The condition for invertibility is that the bilinear form $\mathsf{Bil}(q)_{12}$ associated to the quadratic form $q$ is non-degenerate in the sense that the induced map $A\rightarrow \widehat{A}$ given by $a\mapsto \mathsf{Bil}(q)_{12}(a,-)$ is an isomorphism.
\end{Lemma}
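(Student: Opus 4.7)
The plan is to classify $\mathbf{3Vect}(A[1])$-bimodule structures on $\Sigma^2\mathcal{B}$ by translating them into monoidal 3-functors and then applying higher Eilenberg--MacLane cohomology. First, such a bimodule structure is the same data as a monoidal 3-functor
\be
F: \mathbf{3Vect}(A[1]) \boxtimes \mathbf{3Vect}(A[1])^{\mathrm{mop}} \longrightarrow \mathbf{End}(\Sigma^2\mathcal{B}).
\ee
Since $A$ is abelian, the source is equivalent to $\mathbf{3Vect}((A\oplus A)[1])$. For the target, $\Sigma^2\mathcal{B}$ is connected and $\Omega\mathbf{End}(\Sigma^2\mathcal{B}) \simeq \mathcal{Z}(\Sigma\mathcal{B})$, which by non-degeneracy of $\mathcal{B}$ is $\mathbf{2Vect}$ (as established in the proof of the previous lemma); hence $\mathbf{End}(\Sigma^2\mathcal{B}) \simeq \mathbf{3Vect}$.

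Next, applying $\Omega$ identifies monoidal 3-functors $\mathbf{3Vect}((A\oplus A)[1]) \to \mathbf{3Vect}$ with braided monoidal 2-functors $\mathbf{2Vect}((A\oplus A)[0]) \to \mathbf{2Vect}$, both sides carrying their canonical symmetric braiding induced by the abelian group structure. A Postnikov/classifying-space argument identifies such functors with $H^4((A\oplus A)[2]; \mathbb{C}^\times)$, and by the classical Eilenberg--MacLane computation this group is canonically the group of quadratic forms $q : A \oplus A \to \mathbb{C}^\times$. Such a form decomposes as $q(a,b) = q_L(a)\, q_R(b)\, \mathsf{Bil}(q)_{12}(a,b)$, with $q_L(a) = q(a,0)$ and $q_R(b) = q(0,b)$ the restrictions to the two factors (both quadratic forms on $A$) and $\mathsf{Bil}(q)_{12}$ the cross bilinear form on $A \times A$.

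Finally, the bimodule $\Sigma^2\mathcal{B}$ is invertible precisely when the monoidal 3-functor $\mathbf{3Vect}(A[1]) \to \mathbf{End}_{\mathbf{3Vect}(A[1])}(\Sigma^2\mathcal{B})$ induced by the right action is an equivalence. Adapting the argument of \cite[Proposition 9.3]{ENO2} to the 3-categorical setting, the individual forms $q_L$ and $q_R$ correspond to inner re-twistings of the left- and right-actions separately and so do not obstruct invertibility; only the cross bilinear form $\mathsf{Bil}(q)_{12}$ is substantive. One concludes that the bimodule is invertible if and only if $\mathsf{Bil}(q)_{12}$ is non-degenerate, i.e.\ the map $a \mapsto \mathsf{Bil}(q)_{12}(a,-)$ gives an isomorphism $A \xrightarrow{\sim} \widehat{A}$. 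The main obstacle is this last step: carefully verifying, via the higher Morita calculus of \cite{D4,D8}, that $q_L$ and $q_R$ can indeed be absorbed without affecting invertibility, and that non-degeneracy of $\mathsf{Bil}(q)_{12}$ is equivalent to the stated equivalence of monoidal 3-functors.
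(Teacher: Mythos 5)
Your first half coincides with the paper's: non-degeneracy of $\mathcal{B}$ gives $\mathbf{End}(\Sigma^2\mathcal{B})\simeq\mathbf{3Vect}$, so a bimodule structure is a monoidal functor $\mathbf{3Vect}((A\oplus A)[1])\rightarrow\mathbf{3Vect}$, i.e.\ a class in $H^4((A\oplus A)[2];\mathbb{C}^{\times})$, i.e.\ a quadratic form on $A\times A$. That part is fine and is exactly what the paper does.

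The gap is in the invertibility step, and you have flagged it yourself without closing it. The assertion that ``$q_L$ and $q_R$ correspond to inner re-twistings of the left- and right-actions separately and so do not obstruct invertibility; only the cross bilinear form is substantive'' is precisely the content of the lemma, not a reason for it — citing \cite[Proposition 9.3]{ENO2} as ``adaptable'' does not discharge it, since the decategorified statement does not automatically persist two categorical levels up (indeed, elsewhere in the paper the restrictions $q(a,0)$ and $q(0,b)$ \emph{do} matter for other questions about these bimodules, e.g.\ for computing $\mathbf{3Vect}^q\boxtimes_{\mathbf{3Vect}(A[1])}\mathbf{3Vect}^q$ in the order-$4$ analysis, so one cannot simply declare them invisible). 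The paper closes this by an explicit computation: using the previous lemma, $\mathbf{End}_{\mathbf{3Vect}(A[1])}(\Sigma^2\mathcal{B})\simeq\mathbf{3Rep}(A[1])\simeq\mathbf{3Vect}(\widehat{A}[1])$, and, following \cite[remark 9.2]{ENO2}, the induced $\mathbf{3Vect}(A[1])$-bimodule structure on $\mathbf{3Vect}(\widehat{A}[1])$ acts on a $1$-morphism $\lambda\in\widehat{A}$ by $(a\otimes\lambda)(x)=\mathsf{Bil}(q)_{12}(a,x)\lambda(x)$ and $(\lambda\otimes b)(x)=\lambda(x)\mathsf{Bil}(q)_{12}(b,x)$. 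This formula manifestly depends only on $\mathsf{Bil}(q)_{12}$, and the resulting bimodule is equivalent to the canonical one on $\mathbf{3Vect}(A[1])$ exactly when $a\mapsto\mathsf{Bil}(q)_{12}(a,-)$ is an isomorphism $A\rightarrow\widehat{A}$. You need to supply this (or an equivalent) computation; without it the ``only if'' direction in particular is unsupported.
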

\begin{proof}
Firstly, given that $\mathcal{B}$ is non-degenerate, we have $\mathbf{End}(\Sigma^2\mathcal{B})\simeq\mathbf{3Vect}$, so that the data of a $\mathbf{3Vect}(A[1])$-bimodule structures on $\Sigma^2\mathcal{B}$ corresponds precisely to the data of a monoidal functor 
\be
\mathbf{3Vect}(A[1])\boxtimes \mathbf{3Vect}(A[1])\simeq\mathbf{3Vect}((A\oplus A)[1])\rightarrow \mathbf{3Vect},
\ee
which in turn is exactly the data of a class in $H^4((A\oplus A)[2];\mathbb{C}^{\times})$ as desired.

We have to compute $\mathbf{End}_{\mathbf{3Vect}(A[1])}(\Sigma^2\mathcal{B})$ together with its associated $\mathbf{3Vect}(A[1])$-bimodule structure. Recall from the previous lemma that we have 
\be 
\mathbf{End}_{\mathbf{3Vect}(A[1])}(\Sigma^2\mathcal{B})\simeq \mathbf{3Rep}(A[1])\simeq \mathbf{3Vect}(\widehat{A}[1])\,.
\ee
But, following \cite[remark 9.2]{ENO2}, one can compute that the induced $\mathbf{3Vect}(A[1])$-bimodule structure on $\mathbf{3Vect}(\widehat{A}[1])$ is given at the level of 1-morphisms by 
\be
\ba
(a\otimes \lambda)(x)&=\mathsf{Bil}(q)_{12}(a,x)\lambda(x)\cr  (\lambda\otimes b)(x) &=\lambda(x)\mathsf{Bil}(q)_{12}(b,x)\,,
\ea
\ee
for $\lambda\in \widehat{A}$ representing a 1-morphism in $\mathbf{3Vect}(\widehat{A}[1])$, and $a,b\in A$ representing 1-morphisms in $\mathbf{3Vect}(A[1])$, and $x\in A$. This shows that 
\be
\mathbf{End}_{\mathbf{3Vect}(A[1])}(\Sigma^2\mathcal{B})\simeq \mathbf{3Vect}(A[1])
\ee
as $\mathbf{3Vect}(A[1])$-bimodules where both sides are endowed with the canonical bimodule structures if and only if the assignment $A\rightarrow \widehat{A}$ given by $a\mapsto \mathsf{Bil}(q)_{12}(a,-)$ is an isomorphism.
\end{proof}

Now we study the conditions under which such an invertible bimodule 3-category has order two:

\begin{Lemma}\label{lem:order2}
An invertible $\mathbf{3Vect}(A[1])$-bimodule 3-category $\Sigma^2\mathcal{B}$ with $\mathcal{B}$ non-degenerate and corresponding quadratic form $q$ with $\mathsf{Bil}(q)_{12}$ non-degenerate has order two in\\ $BrPic(\mathbf{3Vect}(A[1]))$ if and only if  $q$ is symmetric and $\mathcal{B}$ has order two in the Witt group.
\end{Lemma}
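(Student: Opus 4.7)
The plan is to directly compute the tensor square of the invertible bimodule $\mathcal{M}=\Sigma^2\mathcal{B}$ over $\mathbf{3Vect}(A[1])$ and identify precisely when it is equivalent to the trivial bimodule. Since $\mathcal{M}$ is invertible, having order two is equivalent to the self-inverse condition $\mathcal{M}\simeq\mathcal{M}^{-1}$, which is useful to keep in mind as it suggests that the problem decouples into two pieces: an underlying-category piece (which will yield the Witt condition on $\mathcal{B}$) and a bimodule-structure piece (which will yield the symmetry condition on $q$).

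First I would compute the underlying finite semisimple 3-category of $\mathcal{M}\boxtimes_{\mathbf{3Vect}(A[1])}\mathcal{M}$. Using $\mathbf{End}(\Sigma^2\mathcal{B})\simeq\mathbf{3Vect}$ (valid since $\mathcal{B}$ is non-degenerate) together with general properties of the relative Deligne product in higher Morita categories, this tensor square should be of the form $\Sigma^2(\mathcal{B}\boxtimes\mathcal{B})$ equipped with induced actions. For the tensor square to be equivalent to $\mathbf{3Vect}(A[1])\simeq\Sigma^2\mathbf{Vect}(A)$ as a bare 3-category, after tracking the central structure on $\mathcal{B}\boxtimes\mathcal{B}$ imposed by the bimodule actions, one needs $\mathcal{B}\boxtimes\mathcal{B}$ to be Witt trivial. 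By definition of the Witt group, this is precisely the statement that $\mathcal{B}$ has order two.

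Second, I would analyze the induced $\mathbf{3Vect}(A[1])$-bimodule structure on the tensor square. The outer left action descends from the left action on the first copy of $\mathcal{M}$ and the outer right action from the right action on the second copy, while the inner right and left actions are identified in the relative Deligne product. Translating through the classification of bimodule structures by quadratic forms from Lemma \ref{lem:quadform}, this should determine an induced quadratic form $\widetilde{q}$ on $A\times A$ computable in terms of $q$; its diagonal and off-diagonal bilinear parts will split into contributions that either cancel or reinforce depending on whether $q(a,b)=q(b,a)$. Demanding that $\widetilde{q}$ correspond to the canonical (trivial) bimodule structure should then impose the single remaining condition that $q$ is symmetric under the swap of its two $A$-arguments, which is exactly the meaning of ``$q$ symmetric'' in the statement.

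The main obstacle is executing the relative Deligne product computation rigorously in the 3-categorical setting, since one must carry all higher coherence data along the way. A potentially cleaner alternative is to transport the problem via the equivalence $\mathscr{B}r\mathscr{P}ic(\mathbf{3Vect}(A[1]))\simeq\mathscr{P}ic(\mathcal{Z}(\mathbf{3Vect}(A[1])))$ and re-express it as a Witt-order computation for the braided rigid algebra in the sylleptic fusion 2-category $\mathbf{2Vect}^{\varsigma}((A\oplus\widehat{A})[0])$ corresponding to $\Sigma^2\mathcal{B}$, using the generalized Witt-group machinery developed in Section \ref{section:crossedbraided}. Either route should produce the same clean separation: $\mathcal{B}$ of order two in Witt controls the underlying 3-category, and symmetry of $q$ controls the bimodule structure.
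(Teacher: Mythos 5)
There is a genuine gap in your main line of argument. The claim that the relative tensor square $\Sigma^2\mathcal{B}\boxtimes_{\mathbf{3Vect}(A[1])}\Sigma^2\mathcal{B}$ has underlying 3-category $\Sigma^2(\mathcal{B}\boxtimes\mathcal{B})$ is false: the relative Deligne product over $\mathbf{3Vect}(A[1])$ is not the absolute one, and its underlying 3-category already depends on the restricted forms $q_1=q(-,0)$ and $q_2=q(0,-)$, i.e.\ on the bimodule structure you were hoping to treat separately. The paper's own computation in the proof of Lemma \ref{lem:propertiesq} makes this concrete: for $\mathcal{B}=\mathbf{Vect}$ with $q_1,q_2$ trivial one finds
\begin{equation*}
\mathbf{3Vect}^q\boxtimes_{\mathbf{3Vect}(A[1])}\mathbf{3Vect}^q\simeq\mathbf{3Rep}(A[1])\simeq\mathbf{3Vect}(\widehat{A}[1]),
\end{equation*}
which is not $\Sigma^2(\mathbf{Vect}\boxtimes\mathbf{Vect})=\mathbf{3Vect}$. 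So your proposed decoupling --- ``Witt triviality of $\mathcal{B}\boxtimes\mathcal{B}$ controls the underlying category, symmetry of $q$ controls the bimodule structure'' --- does not follow from the computation as set up; the two pieces are entangled in the relative tensor product, and identifying the induced bimodule structure on the square (your $\widetilde{q}$) requires essentially the full quasi-triviality analysis that the paper only carries out in the special situation of Lemma \ref{lem:propertiesq}.

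The fix is the observation you mention in passing but do not pursue: for an invertible bimodule, order two is equivalent to $\mathcal{M}\simeq\mathcal{M}^{-1}=\mathcal{M}^{\vee}$, and the \emph{dual} bimodule is directly computable with no relative tensor product at all. This is exactly the paper's proof: the underlying 3-category of $\mathcal{M}^{\vee}$ is $\Sigma^2\mathcal{B}^{rev}$, so an equivalence of underlying 3-categories $\Sigma^2\mathcal{B}\simeq\Sigma^2\mathcal{B}^{rev}$ is precisely a Witt equivalence $\mathcal{B}\sim\mathcal{B}^{rev}$, i.e.\ $\mathcal{B}$ has order at most two in $\Witt$; and the bimodule structure on $\mathcal{M}^{\vee}$, obtained from the opposite of the classifying functor $\mathbf{3Vect}((A\oplus A)[1])\rightarrow\mathbf{3Vect}$, corresponds to the quadratic form $(a,b)\mapsto q(b,a)$, so matching it with $q$ is exactly the symmetry of $q$. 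Here the clean separation you wanted does occur, because duals (unlike relative tensor products) act componentwise on the data of Lemma \ref{lem:quadform}. Your second fallback, transporting to $\Pic(\cZ(\mathbf{3Vect}(A[1])))$ and the generalized Witt group, is plausible in principle but is left entirely unexecuted, and the paper itself emphasizes that making that equivalence explicit is one of the open difficulties.
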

\begin{proof}
An invertible $\mathbf{3Vect}(A[1])$-bimodule 3-category has order two in the Brauer-Picard group if and only if it is equivalent to its dual. But, the underlying 3-category of the dual is $\Sigma^2\mathcal{B}^{rev}$, and an equivalence of 3-categories $\Sigma^2\mathcal{B}\simeq \Sigma^2\mathcal{B}^{rev}$ is exactly a Witt equivalence between $\mathcal{B}$ and $\mathcal{B}^{rev}$, i.e.\ $\mathcal{B}$ has order (at most) two in the Witt group. Then, the $\mathbf{3Vect}(A[1])$-bimodule structure on $\Sigma^2\mathcal{B}^{rev}$ is given by the opposite of the monoidal functor $\mathbf{3Vect}(A[1])\boxtimes \mathbf{3Vect}(A[1])^{mop}\rightarrow \cVect$ providing $\Sigma^2\mathcal{B}$ with its bimodule structure. By inspection, the corresponding quadratic form is $(a,b)\mapsto q(b,a)$, and the result follows.
\end{proof}

\begin{Example} {\bf 3TY Categories for $G=\mathbb{Z}/2$.}
\label{ex:3TYZ2}\\
We give an example of a $\Z/2$-graded extension of $\mathbf{3Vect}(\Z/2[1])$ that is a TY 3-category. We first note that the two groups in which the obstructions live, namely
$H^6(\Z/2[0]; \mathbb{C}^\times)$ and $H^4(\Z/2[0], \underline{\Z/2 \oplus \Z/2})$ both vanish. The underline represents the fact that the coefficients are twisted by the $\Z/2$ action which swaps the two factors. This comes from the fact that the $\Z/2$ global symmetry of the SymTFT acts by permuting the two fields in the DW-action, and the map $\Z/2 \to Aut^{br}(\cZ(\cVect(A[0])))$ picks this action. There is a possible choice of a class $\tau$ in $H^5(\Z/2[0];\mathbb{C}^\times)=\Z/2$, which is analogous to the Frobenious Schur indicator familiar in the context of Tambara-Yamagami 1-categories. Additionally, there is a potential choice of class in $H^3(\Z/2[0]; \underline{\Z/2 \oplus \Z/2})=0$, but this group in fact vanishes.

Invertible $\mathbf{3Vect}(\Z/2[1])$-bimodule 3-categories are classified by (the Witt class of) a non-degenerate braided fusion 1-category $\mathcal{B}$ that has order at most two in the Witt group together with a class in 
\be
H^4((\Z/2 \oplus \Z/2)[2];\mathbb{C}^\times) = \Z/4 \oplus \Z/4 \oplus \Z/2 \,.
\ee
These are a total of $4\times4\times2=32$ number of quadratic forms. The conditions in Lemma \ref{lem:quadform} imply that only 16 quadratic forms are viable. Furthermore reducing to the case when the invertible bimodule has order two, we see that only 4 of the 16 remain  by Lemma \ref{lem:order2}, and can therefore be used to construct TY 3-categories. 

We can twist bimodules by tensor automorphisms of $\mathbf{3Vec}(\Z/2[1])$, analogous to \cite[Example 9.4]{ENO1}. Applying the twists allows us to relate the four bimodules. The group that labels the twist is given by $Aut(\mathbf{3Vec}(\Z/2[1])) \simeq H^4(\Z/2[2];\mathbb{C}^\times) = \Z/4$, and this group allows us to relate each of the four $\mathbf{3Vec}(\Z/2[1])$-bimodules with the others. 


We can thus label such TY 3-categories as
\be
3\TY^w_{\tau,\sigma}(\Z/2[1]) \,,
\ee
where $w$ is a Witt class of order at most two, $\tau\in H^5(\Z/2[0];\mathbb{C}^\times)=\Z/2$ and a class $\sigma$ in a certain torsor for $H^4(\Z/2[2];\mathbb{C}^\times) = \Z/4$.

Note that for the physics construction in \cite{Kaidi:2022cpf} the choice of trivial Witt class $w=0$ for $\mathcal{B}$ was made. This corresponds to choosing $\mathcal{B}=\cZ(\mathcal{C})$ for a fusion category $\mathcal{C}$. Thus, the 3TY category describing such duality defects is of the form $3\TY^{0}_{\tau,\sigma}(\Z/2[1])$.

The duality defects (i.e. objects in $\mathscr{C}_1$) in $3\TY^{w\neq0}_{\tau,\sigma}(\Z/2[1])$ are obtained by stacking 3d TFTs in order two Witt class $w$ on top of duality defects in $3\TY^{w=0}_{\tau,\sigma}(\Z/2[1])$. More specifically, if $\mathbf{D}\in\mathscr{C}_1\subset3\TY^{0}_{\tau,\sigma}(\Z/2[1])$ is a topological defect in the 4d QFT, and $\mathfrak{T}(\mathcal{B})$ is a 3d TFT whose topological lines form a non-degenerate braided fusion category $\mathcal{B}$ lying in Witt class $w$, then the topological defect 
\be
\mathbf{D}\boxtimes \mathfrak{T}(\mathcal{B})\in\mathscr{C}_1\subset3\TY^{w}_{\tau,\sigma}(\Z/2[1])
\ee
that is, the topological defect $\mathbf{D}\boxtimes \mathfrak{T}$ lies in the non-trivial graded component of another 3TY category $3\TY^{w}_{\tau,\sigma}(\Z/2[1])$. The fact that $\mathcal{B}$ is of exact order two means that
\be
\mathcal{B}\neq\mathcal{Z}(\mathcal{C})
\ee
for any fusion category $\mathcal{C}$, but
\be
\mathcal{B}\boxtimes\mathcal{B}=\mathcal{Z}(\mathcal{C})
\ee
for some fusion category $\mathcal{C}$.
An example of such a $\mathcal{B}$ lying in a Witt class of order two is the square of the semion category.

\end{Example}

\subsubsection{Extensions with $G=\Z/4$: Generalized $\mathbf{3TY}$-Categories}
When $A=\Z/n$ for $n>2$, we need to consider $\Z/4$-graded extensions of $\cVect(A[1])$, unlike the Tambara-Yamagami categories that arise when $n=2$. In order to see this, recall that any $G$-graded extension of $\cVect(A[1])$ induces an
action of the group $G$ on the center $\cZ(\cVect(A[1])) = \Mod(\bVect^\varsigma(A\oplus \widehat{A}[1]))$ by braided monoidal equivalences.  In particular, we get an induced action of $G$ on $A\oplus\widehat{A}$ compatible with the canonical sympletic form $\varsigma$ on $A\oplus\widehat{A}$. Now, if the abelian group $A$ has elements of order strictly greater than $2$, the action of the group $G=\mathbb{Z}/2$ given by the swap $\begin{pmatrix}
    0 & +1\\
    +1 & 0
\end{pmatrix}$\,, which is associated to classical Tambara-Yamagami extensions, does not preserve this symplectic structure, and is therefore not an admissible action. Instead, we may consider the action of $G=\mathbb{Z}/4$ given by the matrix $\begin{pmatrix}
    0 & +1\\
    -1 & 0
\end{pmatrix}$\,. As explained in section \ref{subsection:3TYphysics}, from the perspective of DW-theory, we see that the group of actions is $\Z/4$ and this will be our focus for physical purposes. We will expand on \cite[Section 7]{Kaidi:2022cpf} by describing the categories that appears in each $\Z/4$-graded component. We will also describe surface operators in the component graded by $2\in\Z/4$ that have not been discussed in the literature.

Before getting into this study, we prove a useful lemma when used in conjunction with the 3-categorical version of Ostrik's theorem (see \cite{O1,D3} for lower categorical versions). This will be important for helping us with the constructions of the $\Z/4$-graded categories.

\begin{Lemma}\label{lem:BrtoPic}
Let $A$ be a finite abelian group, and let $\mathcal{B}_1$, $\mathcal{B}_2$ be two $A$-graded braided fusion 1-categories. Then, $\mathbf{Bimod}_{\mathbf{3Vect}(A[1])}(\Sigma\mathcal{B}_1,\Sigma\mathcal{B}_2)$ is the 3-category of $A$-graded $\mathcal{B}_1$-$\mathcal{B}_2$-central multifusion 1-categories, with the morphisms as in \cite[sections 3.3-6]{BJS}. Moreover, it is connected, and can be expressed as 
\be 
\mathbf{Bimod}_{\mathbf{3Vect}(A[1])}(\Sigma\mathcal{B}_1,\Sigma\mathcal{B}_2)\simeq \Sigma\mathbf{Mod}_{\mathbf{2Vect}(A[0])}(\mathcal{B}_1\boxtimes\mathcal{B}_2^{rev})\,.
\ee 
If the canonical $A$-grading on $\mathcal{B}_1\boxtimes\mathcal{B}_2$ is faithful, then \be 
\mathbf{Bimod}_{\mathbf{3Vect}(A[1])}(\Sigma\mathcal{B}_1,\Sigma\mathcal{B}_2)\simeq \Sigma^2(\mathcal{B}_1\boxtimes\mathcal{B}^{rev}_2)_0,
\ee 
where $(\mathcal{B}_1\boxtimes\mathcal{B}^{rev}_2)_0$ is the $0$-graded component of $\mathcal{B}_1\boxtimes\mathcal{B}_2$.
\end{Lemma}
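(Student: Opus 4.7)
The plan is to reduce the claim to the Morita-theoretic equivalence $\mathbf{Mod}(\Sigma\mathfrak{S})\simeq\mathsf{Mor}_2^{sep}(\mathfrak{S})$ of the previous subsection, applied with $\mathfrak{S}=\mathbf{2Vect}(A[0])$, together with the standard reduction for faithfully graded fusion categories. Under this equivalence, an $A$-graded braided fusion 1-category $\mathcal{B}_i$ is exactly a braided rigid algebra in $\mathfrak{S}$, and the corresponding object of $\mathbf{Mod}(\mathbf{3Vect}(A[1]))$ is the bimodule 3-category $\Sigma\mathcal{B}_i\simeq\Sigma\mathbf{Mod}_{\mathfrak{S}}(\mathcal{B}_i)$; the fact that $\mathcal{B}_i$ is braided (rather than merely associative) is precisely what promotes its left $\mathbf{3Vect}(A[1])$-module structure to a bimodule structure, compatibly with the monoidal Morita 4-category.

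The heart of the argument is then a computation of the mapping 3-category in $\mathsf{Mor}_2^{sep}(\mathfrak{S})$ from $\mathcal{B}_1$ to $\mathcal{B}_2$. Following the standard recipe for Morita categories of $E_2$-algebras in a braided monoidal 2-category, a 1-morphism $\mathcal{B}_1\to\mathcal{B}_2$ is an associative algebra in the 2-category of $\mathcal{B}_1$-$\mathcal{B}_2$-bimodules in $\mathfrak{S}$; using the braiding of $\mathfrak{S}$ to collapse the two-sided action into a single one-sided action, this 2-category is equivalent to $\mathbf{Mod}_{\mathfrak{S}}(\mathcal{B}_1\boxtimes\mathcal{B}_2^{rev})$, with $\boxtimes$ now denoting the product of algebras inside $\mathfrak{S}$. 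Iterating algebras and bimodules one level down identifies the full 1-morphism 3-category with $\Sigma\mathbf{Mod}_{\mathfrak{S}}(\mathcal{B}_1\boxtimes\mathcal{B}_2^{rev})$; this is automatically connected because any $\Sigma$-construction produces a 3-category in which all additional simple objects arise as condensates of the distinguished generator. Finally, unpacking module 2-categories over the multifusion 2-category $\mathbf{Mod}_{\mathfrak{S}}(\mathcal{B}_1\boxtimes\mathcal{B}_2^{rev})$ through the dictionary ``modules in $\mathbf{2Vect}(A[0])$ $=$ $A$-graded'' identifies objects with $A$-graded $\mathcal{B}_1$-$\mathcal{B}_2$-central multifusion 1-categories, with the higher morphisms matching those in \cite{BJS} after routine translation.

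For the second formula, suppose the natural $A$-grading on $\mathcal{B}_1\boxtimes\mathcal{B}_2^{rev}$, obtained by pushing the $(A\oplus A)$-grading along addition, is faithful. A standard argument generalizing the classical fact for fusion 1-categories shows that acting by any nontrivial graded component on a module produces an equivalence between the corresponding graded pieces, so that $\mathbf{Mod}_{\mathbf{2Vect}(A[0])}(\mathcal{F})\simeq\mathbf{Mod}(\mathcal{F}_0)=\Sigma\mathcal{F}_0$ for any faithfully $A$-graded algebra $\mathcal{F}$. Applying this with $\mathcal{F}=\mathcal{B}_1\boxtimes\mathcal{B}_2^{rev}$ and then applying $\Sigma$ once more yields the claimed identification with $\Sigma^2(\mathcal{B}_1\boxtimes\mathcal{B}_2^{rev})_0$.

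The step I expect to require the most care is the identification of the 1-morphism 3-category in $\mathsf{Mor}_2^{sep}(\mathfrak{S})$ with $\Sigma\mathbf{Mod}_{\mathfrak{S}}(\mathcal{B}_1\boxtimes\mathcal{B}_2^{rev})$. The 4-categorical equivalence $\mathbf{Mod}(\Sigma\mathfrak{S})\simeq \mathsf{Mor}_2^{sep}(\mathfrak{S})$ in Section \ref{sec:HigherMorita} was established essentially at the level of objects (with its monoidal structure), but promoting it to an explicit description of mapping 3-categories, and then matching these with the explicit model of $\mathcal{B}_1$-$\mathcal{B}_2$-central multifusion 1-categories of \cite{BJS} together with all their higher morphisms, requires a careful invocation of the higher module theory of \cite{D4,D8,D9,D10}. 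The faithfully graded reduction, by contrast, should be a fairly direct categorification of the familiar 1-categorical statement.
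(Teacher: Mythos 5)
Your proposal is correct, but it takes a genuinely different route from the paper's proof. You transport the whole computation through the monoidal equivalence $\mathbf{Mod}(\Sigma\mathfrak{S})\simeq\mathsf{Mor}_2^{sep}(\mathfrak{S})$ of the previous subsection (with $\mathfrak{S}=\mathbf{2Vect}(A[0])$) and then read off the Hom 3-category from the layered description of Morita categories of $E_2$-algebras: 1-morphisms are algebras in $\mathcal{B}_1$-$\mathcal{B}_2$-bimodules, which collapse via the braiding to $\mathbf{Mod}_{\mathfrak{S}}(\mathcal{B}_1\boxtimes\mathcal{B}_2^{rev})$, and the Morita 3-category of rigid algebras therein is $\Sigma\mathbf{Mod}_{\mathfrak{S}}(\mathcal{B}_1\boxtimes\mathcal{B}_2^{rev})$ by the results of \cite{D4,D8,D9}. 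The paper instead argues directly and more economically: it establishes connectedness first, by observing that every $A$-graded $\mathcal{B}_1$-$\mathcal{B}_2$-central fusion 1-category is itself a bimodule connecting it to the distinguished object, and then computes only the endomorphism fusion 2-category of that one object via the free--forgetful adjunction, $\mathbf{End}_{\Sigma\mathcal{B}_1\text{-}\Sigma\mathcal{B}_2}(\Sigma\mathcal{B}_1\boxtimes\Sigma\mathcal{B}_2)\simeq\mathbf{Hom}_{\mathbf{3Vect}(A[1])}(I,\Sigma\mathcal{B}_1\boxtimes\Sigma\mathcal{B}_2)\simeq\mathbf{Mod}_{\mathbf{2Vect}(A[0])}(\mathcal{B}_1\boxtimes\mathcal{B}_2^{rev})$; connectedness then forces the whole Hom 3-category to be $\Sigma$ of this. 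Note the logical inversion: in your argument connectedness is a corollary of the $\Sigma$-identification (which is fine, since Cauchy completion of a one-object 3-category only adds condensates of the generator), while in the paper it is an input to it. What your route buys is that the $\mathcal{B}_1$-$\mathcal{B}_2$-central description of objects and the \cite{BJS}-style higher morphisms come packaged with the Morita model; what it costs is exactly what you flag yourself, namely that the equivalence of the previous subsection must be upgraded to an identification of mapping 3-categories, whereas the paper's adjunction argument sidesteps this entirely. The faithfully graded reduction is handled essentially identically in both, up to rephrasing $\mathbf{Mod}_{\mathbf{2Vect}(A[0])}(\mathcal{F})\simeq\Sigma\mathcal{F}_0$ versus ``connected fusion 2-category, hence $\Sigma$ of its loop space.''
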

\begin{proof}
The first part is an exercise in unfolding the definitions. In particular, connectedness is a direct consequence of the fact that every $A$-graded $\mathcal{B}_1$-$\mathcal{B}_2$-central fusion 1-category $\mathcal{C}$ can be viewed as a bimodule between itself and $\mathcal{B}_1\boxtimes\mathcal{B}_2$. Then, the middle part follows from the equivalences of fusion 2-categories \begin{align*}\mathbf{End}_{\Sigma\mathcal{B}_1-\Sigma\mathcal{B}_2}(\Sigma\mathcal{B}_1\boxtimes\Sigma\mathcal{B}_2)&\simeq \mathbf{Hom}_{\mathbf{3Vect}(A[1])}(I,\Sigma\mathcal{B}_1\boxtimes\Sigma\mathcal{B}_2)\\&\simeq \mathbf{Mod}_{\mathbf{2Vect}(A[0])}(\mathcal{B}_1\boxtimes\mathcal{B}_2),\end{align*} where the first equivalence comes from the free-forgetful adjunction, and the second follows from the first part. The last statement follows from the observation that, under the assumption that the canonical $A$-grading on $\mathcal{B}_1\boxtimes\mathcal{B}_2$ is faithful, then $\mathbf{Mod}_{\mathbf{2Vect}(A[0])}(\mathcal{B}_1\boxtimes\mathcal{B}_2)$ is a connected fusion 2-category.
\end{proof}

Now we return to the problem of understanding $\Z/4$ extensions. As we have seen above, invertible bimodule structures on $\mathbf{3Vect}$ are classified by quadratic forms $q:A\times A\rightarrow \mathbb{C}^{\times}$ such that the associated bilinear form $\mathsf{Bil}(q)_{12}$ is non-degenerate. As a consequence of the above discussion, we are interested in understanding for which quadratic forms $q$, the invertible $\mathbf{3Vect}(A[1])$-bimodule $\mathbf{3Vect}^q$ has order $4$ in the group $BrPic(\mathbf{3Vect}(A[1]))$.

\begin{Lemma}\label{lem:propertiesq}
Let $A=\Z/n$ with $n>2$ be a cyclic group. Let $\mathbf{3Vect}^q$ be an invertible $\mathbf{3Vect}(A[1])$-bimodule 3-category where $q$ is a quadratic form with $q(a,0) = q_1(a) $ and $q(0,b) = q_2(b)$ both trivial, i.e.\ the subgroups $A\oplus 0$ and $0\oplus A$ are both Lagrangian. The $\mathbf{3Vect}(A[1])$-bimodule $\mathbf{3Vect}^q$ has order $4$ in $BrPic(\mathbf{3Vect}(A[1]))$ if and only if $\mathsf{Bil}(q)_{12}$ is anti-symmetric non-degenerate.
\end{Lemma}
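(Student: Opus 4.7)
My plan is to reduce the order computation to Lemma~\ref{lem:order2} applied to the tensor square $\mathbf{3Vect}^q \boxtimes_{\mathbf{3Vect}(A[1])} \mathbf{3Vect}^q$. A first observation is that $\mathbf{3Vect}^q$ cannot itself have order one, since the underlying 3-category $\mathbf{3Vect}$ is not equivalent to $\mathbf{3Vect}(A[1])$ for non-trivial $A$. Moreover it cannot have order two: by Lemma~\ref{lem:order2} this would require $\mathbf{Vect}$ to have Witt class of order two, whereas $\mathbf{Vect}$ has trivial Witt class. Consequently, $\mathbf{3Vect}^q$ has order four if and only if its tensor square has order two.

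The next step is to compute this tensor square. I would combine Lemma~\ref{lem:quadform} with the Morita equivalence $BrPic(\mathbf{3Vect}(A[1])) \cong Pic(\cZ(\mathbf{3Vect}(A[1])))$ of Section~\ref{sec:HigherMorita}, using the presentation $\cZ(\mathbf{3Vect}(A[1])) \simeq \Sigma\mathbf{2Vect}^\varsigma((A \oplus \widehat A)[0])$. Under this equivalence the bimodule $\mathbf{3Vect}^q$, having both Lagrangian subgroups trivial, corresponds to an anti-diagonal invertible object whose off-diagonal block is the isomorphism $A \to \widehat A$ determined by $\mathsf{Bil}(q)_{12}$, with the opposite block pinned down by compatibility with the canonical sylleptic form $\varsigma$. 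Composing such an anti-diagonal element with itself yields a diagonal invertible object whose induced action on $A \oplus \widehat A$ is $\pm\mathrm{id}$, the sign being controlled by whether $\mathsf{Bil}(q)_{12}$ is symmetric or anti-symmetric as a bilinear form on $A \times A$.

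Assuming $\mathsf{Bil}(q)_{12}$ is anti-symmetric non-degenerate, the resulting sign should be such that the square is a non-trivial diagonal invertible object of order two in $Aut^{br}(\cZ(\mathbf{3Vect}(A[1])))$, using crucially that $A = \mathbb{Z}/n$ with $n > 2$ so that $-\mathrm{id} \neq \mathrm{id}$ on $A$. I would then identify the underlying 3-category of this square in the form $\Sigma^2 \mathcal{B}'$ for a non-degenerate braided fusion 1-category $\mathcal{B}'$ equipped with a new quadratic form $q'$, and invoke Lemma~\ref{lem:order2} to verify that the square has Brauer-Picard order two precisely when $\mathcal{B}'$ has Witt class of order two and $q'$ is symmetric. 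The reverse implication follows by showing that if $\mathsf{Bil}(q)_{12}$ fails to be anti-symmetric non-degenerate, then either the square is trivial (so the order of $\mathbf{3Vect}^q$ is at most two, contradicting the earlier observation that the order is at least three) or the invertibility hypothesis fails.

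The main obstacle is making explicit the computation of the tensor square at the 3-categorical level. Unlike the familiar 1-categorical composition of bicharacters, composing the monoidal functors $F_q\colon \mathbf{3Vect}((A \oplus A)[1]) \to \mathbf{3Vect}$ classifying $\mathbf{3Vect}^q$ requires tracking how the syllepsis $\varsigma$ interacts with the swap of the two $A$-factors. Concretely, extracting the braided fusion 1-category $\mathcal{B}'$ and the symmetry properties of $q'$ entails a careful analysis of the structure of the sylleptic strongly fusion 2-category $\mathbf{2Vect}^\varsigma((A \oplus \widehat A)[0])$ from Section~\ref{subsection:sylleptic}; the statement that the anti-symmetric non-degenerate case produces a $\mathcal{B}'$ with Witt class of order two is the most delicate point of the argument, as it is precisely here that the distinction between the $\mathbb{Z}/2$-graded Tambara--Yamagami case and the genuinely $\mathbb{Z}/4$-graded generalization enters.
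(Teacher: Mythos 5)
Your overall strategy --- pass to the tensor square, read off the induced action on $\mathcal{Z}(\mathbf{3Vect}(A[1]))\simeq \Sigma\mathbf{2Vect}^\varsigma((A\oplus\widehat{A})[0])$ as an anti-diagonal matrix built from $\mathsf{Bil}(q)_{12}$, and use $n>2$ to see that the square acts by $-\mathrm{id}\neq\mathrm{id}$ --- is the same as the paper's. But your closing step has a genuine gap: you propose to identify the tensor square as $\Sigma^2\mathcal{B}'$ for a \emph{non-degenerate} braided fusion 1-category $\mathcal{B}'$ and then invoke Lemma \ref{lem:order2}. That lemma only governs invertible bimodules whose underlying 3-category is $\mathbf{3Vect}$; the tensor square here is not of that form. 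Since $q_1$ and $q_2$ are trivial, the paper computes $\mathbf{3Vect}^q\boxtimes_{\mathbf{3Vect}(A[1])}\mathbf{3Vect}^q\simeq \mathbf{3Rep}(A[1])\simeq\mathbf{3Vect}(A[1])$ via the relative product of module 3-categories over the algebras $\mathbf{Vect}^{q_i}_{A\xrightarrow{=}A}$, i.e.\ the square is a \emph{quasi-trivial} bimodule induced by a monoidal autoequivalence of $\mathbf{3Vect}(A[1])$ (so the would-be $\mathcal{B}'$ is degenerate, and Lemma \ref{lem:order2} says nothing about it). The paper then closes the argument with a different principle: for a quasi-trivial bimodule the order in $BrPic$ equals the order of the induced braided autoequivalence of the center, which here is $\begin{pmatrix}-1&0\\0&-1\end{pmatrix}$, of order two. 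Without this (or some substitute), knowing that the square acts by $-\mathrm{id}$ on $A\oplus\widehat{A}$ only bounds the order of $\mathbf{3Vect}^q$ from below, since --- as the paper stresses --- $BrPic\rightarrow Aut^{br}(\mathcal{Z})$ is far from injective for fusion 3-categories.

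A secondary error: your exclusion of order two rests on reading Lemma \ref{lem:order2} as requiring $\mathcal{B}$ to have Witt class of order \emph{exactly} two; its proof shows the condition is order \emph{at most} two, which $\mathbf{Vect}$ satisfies. Indeed, when $q$ is symmetric (still compatible with $q_1,q_2$ trivial), $\mathbf{3Vect}^q$ genuinely has order two --- this is the $\mathbb{Z}/2$-graded Tambara--Yamagami case --- so the claim "it cannot have order two" is false under the lemma's hypotheses. Fortunately this step is dispensable: "order $4$ iff the square has order $2$" is elementary group theory and needs no preliminary exclusion of orders $1$ and $2$. Relatedly, your treatment of the converse direction ("either the square is trivial or invertibility fails") should simply be: if $\mathsf{Bil}(q)_{12}$ is non-degenerate but not anti-symmetric, the square of the anti-diagonal matrix is not $-\mathrm{id}$, so the order is not $4$; no appeal to Witt classes is needed.
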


\begin{proof}
We start by computing \be \mathbf{3Vect}^q\boxtimes_{\mathbf{3Vect}(A[1])}\mathbf{3Vect}^q
\ee
as a plain 3-category. This computation only depends on $q(a,0)=q_1(a)$ and $q(0,b)=q_2(b)$, which correspond to the left and right $\cVect(A[1])$-module structures on $\cVect$ respectively. Let $\mathbf{Vect}_{A \xrightarrow{=}A}^{q_1}$ and $\mathbf{Vect}_{A \xrightarrow{=}A}^{q_2}$ be braided separable algebras in $\bVect(A)$ by means of the canonical $A$-gradings given by $A \xrightarrow{=}A$.
Then, we have 
\be\ba
\mathbf{3Vect}^q &\simeq\mathbf{Mod}(\mathbf{LMod}_{\mathbf{2Vect}(A)}(\mathbf{Vect}_{A \xrightarrow{=}A}^{q_1}))
\cr
\mathbf{3Vect}^q&\simeq\mathbf{Mod}(\mathbf{Mod}_{\mathbf{2Vect}(A)}(\mathbf{Vect}_{A \xrightarrow{=}A}^{q_2}))
\ea
\ee
as left and right $\mathbf{3Vect}(A[1])$-module 3-categories respectively. We then have  
\begin{align}
\mathbf{3Vect}^q\boxtimes_{\mathbf{3Vect}(A[1])}\mathbf{3Vect}^q &\simeq \mathbf{Mod}(\mathbf{Bimod}_{\mathbf{2Vect}(A[0])}(\mathbf{Vect}_{A\xrightarrow{=}A},\mathbf{Vect}_{A\xrightarrow{=}A}^{q_2}))\\ &\simeq \mathbf{3Rep}(A[1])\simeq \mathbf{3Vect}(A[1]),\end{align} 
as both $q_1$ and $q_2$ were assumed to be trivial. Therefore $\mathbf{3Vect}^q\boxtimes_{\mathbf{3Vect}(A[1])}\mathbf{3Vect}^q$ is a quasi-trivial bimodule, i.e.\ it arises from a monoidal autoequivalence of $\mathbf{3Vect}(A[1])$.
Then, the action on the center induced by $\mathbf{3Vect}^q$ is given by $\begin{pmatrix}
    0 & \widehat{\mathsf{Bil}(q)_{12}}^{-1}\\
    \mathsf{Bil}(q)_{12} & 0
\end{pmatrix}$\,, where $\mathsf{Bil}(q)_{12}:A\rightarrow \widehat{A}$ is given by $a\mapsto \mathsf{Bil}(q)_{12}(a,-)$. This automorphism of $A\oplus\widehat{A}$, which preserves the canonical symplectic form, has order at exactly $4$ if and only if $\mathsf{Bil}(q)_{12} = -\widehat{\mathsf{Bil}(q)_{12}}$, i.e.\ the bilinear form $\mathsf{Bil}(q)_{12}$ is antisymmetric. It therefore only remains to argue that the corresponding invertible bimodule 3-category $\mathbf{3Vect}^q$ has order exactly $4$ in $BrPic(\mathbf{3Vect}(A[1]))$. To see this, it is enough to show that $\mathbf{3Vect}^q\boxtimes_{\mathbf{3Vect}(A[1])}\mathbf{3Vect}^q$ has order two. But, this last invertible bimodule is quasi-trivial, so that its order is exactly the order of the induced action on the center. 
To see the action on the center we note that the center has a pairing given by $\varsigma$, and a quasi-bimodule gives a monoidal action on $A[1]$. This induces a nontrivial braided action on the $A[0]$ part of the center via the description in \eqref{eq:centerequiv}, and the pairing $\varsigma$ gives an action on the other part of the center, which is detectable on the $A[0]$ part. 
Furthermore, the action of $\mathbf{3Vect}^q\boxtimes_{\mathbf{3Vect}(A[1])}\mathbf{3Vect}^q$ on the center is given by $\begin{pmatrix}
    -1 & 0\\
    0 & -1
\end{pmatrix}$\,, which has order two as desired.
\end{proof}

 With the knowledge of the bimodule structures that the nontrivial graded components of the $\Z/4$-graded category should have, we now write the generalization of Tambara-Yamagami 3-categories: 
\begin{equation}\label{eq:Z4graded}
    \mathbf{3Vect}(A[1]) \boxplus \mathbf{3Vect}^q \boxplus {\mathbf{3Vect}^{\phi}(A[1])}\boxplus \mathbf{3Vect}^{q^{-1}}
\end{equation}
where $\cVect(A[1])$ lies in degree 0, and $\cVect^{q^{-1}}$ lies in degree 3.  We introduce the data
$\phi =(\widetilde{\phi},\alpha) \in  H^4(A[2];\mathbb{C}^\times) \rtimes Aut(A)$ where $\phi$ represents a tensor automorphism of $\cVect(A[1])$,  given by the latter semi-direct product. Therefore, $\phi$ will act as a twist of the $\mathbf{3Vect}(A[1])$ action so that it is not merely the degree zero component again: $\widetilde \phi$ modifies the module structure for $\cVect(A[1])$ on the degree 2 category, and $a \in A$, $a \mapsto -a$ under $\alpha$ because we are considering 3TY categories.  There is a two sided action on $\cVect(A[1])$ given by $\widetilde{\phi}$ and we will denote $\widetilde{\phi}_1$ for a left action and $\widetilde{\phi}_2$ for a right action.

We now describe the fusion rules for the $\Z/4$-graded category. We will represent a 3-dimensional topological defect corresponding to the object $\mathbf{Vect}$ inside of $\cVect^q$ as $\mathbf{D}^{(1)}_3$, and a 3-dimensional topological defect corresponding to the object $\mathbf{Vect}$ inside of $\cVect^{q^{-1}}$ as $\mathbf{D}^{(3)}_3$
 A 3-dimensional topological defect in degree $0$ is necessarily a condensation defect. The condensation defect corresponding to the object $\mathbf{Vect}(A[0])$ with canonical $A$-grading inside of $\cVect(A[1])$ is labeled as $\mathbf{C}^{(0)}_3$. A 3-dimensional topological defect corresponding to the object $\mathbf{Vect}(A[0])$ with canonical $A$-grading inside of $\cVect^\phi(A[1])$ is labeled as $\mathbf{D}^{(2)}_3$.
 The fusion rules of the $\mathbf{C}_3^{(0)}$ and $\mathbf{D}^{(2)}_3$ defects are:
 \be\label{eq:fusionof0and2}
 \ba
 \mathbf{C}^{(0)}_3 \times \mathbf{C}^{(0)}_3 
 &= \mathbf{D}^{(2)}_3 \times \mathbf{D}^{(2)}_3=\mathbf{Vect}({A}[0])\boxtimes \mathbf{C}^{(0)}_3\cr 
 \mathbf{C}^{(0)}_3 \times \mathbf{D}^{(2)}_3 &= \mathbf{Vect}^{\Omega \widetilde{\phi}_1}(A[0])\boxtimes \mathbf{D}^{(2)}_3\cr 
 \mathbf{D}^{(2)}_3 \times \mathbf{C}^{(0)}_3  &= \mathbf{Vect}^{\Omega \widetilde{\phi}_2}(A[0])\boxtimes \mathbf{D}^{(2)}_3 \,.
\ea
\ee 
Here $\mathbf{Vect}(A[0])$ and $\mathbf{Vect}^{\Omega \widetilde{\phi}_i}(A[0])$ can be thought of as  ``TQFT coefficients" \cite{Roumpedakis:2022aik,Copetti:2023mcq}, as they are taken with the trivial $A$-grading, and $\Omega \widetilde{\phi}$ is the element in $H^3(A[1];\mathbb{C}^\times)$ which suspends to $\widetilde{\phi}$. This gives the associator for the TQFT coefficients. Since $\mathbf{C}^{(0)}$ and $\mathbf{C}^{(2)}$ correspond to $\mathbf{Vect}({A}[0])$ with canonical $A$-grading in the higher Morita category, their product is given by $\mathbf{Vect}((A \oplus A)[0])$ with the product grading, which can be factorized into a trivially $A$-graded component, and a canonically $A$-graded component by taking a basis that consists of $(a,-a)$ and $(a,0)$ in $A \oplus A$. These are analogous to the fusion rules presented in \cite[Section 3]{Antinucci:2022cdi} and \cite[Equation 1.4]{Choi:2022zal} which should be matched with our case $\widetilde\phi=0$. In this case, we can identify $\mathbf{D}_3^{(2)}$ as the fusion $\mathbf{C}_3^{(0)}\times\mathcal{U}$, where $\mathcal{U}$ is the charge conjugation operator. 

The fusion rules only consisting of $\mathbf{D}^{(i)}_3$ are given by 
\be
\ba
\mathbf{D}^{(1)}_3\times \mathbf{D}^{(1)}_3&= \mathbf{D}^{(3)}_3 \times \mathbf{D}_3^{(3)}=\mathbf{D}^{(2)}_3\cr 
\mathbf{D}^{(1)}_3\times \mathbf{D}^{(3)}_3 &= \mathbf{D}^{(3)}_3\times \mathbf{D}^{(1)}_3= \mathbf{C}^{(0)}_3\,.
\ea
\ee
We also have the fusion rules 
\be\ba
\mathbf{C}^{(0)}_3\times \mathbf{D}^{(1)}_3
& = \mathbf{Vect}^{}(A[0])\boxtimes \mathbf{D}^{(1)}_3\cr 
\mathbf{D}^{(1)}_3 \times \mathbf{C}^{(0)}_3& = \mathbf{Vect}^{}(A[0])\boxtimes \mathbf{D}^{(1)}_3
\cr 
\mathbf{D}^{(2)}_3\times \mathbf{D}^{(1)}_3& = \mathbf{Vect}^{}(A[0])\boxtimes \mathbf{D}^{(1)}_3\cr  \mathbf{D}^{(1)}_3\times \mathbf{D}^{(2)}_3 &= \mathbf{Vect}^{}(A[0])\boxtimes \mathbf{D}^{(1)}_3\,,
\ea
\ee
which are the same if we replace $\mathbf{D}^{(1)}_3$ by $\mathbf{D}^{(3)}_3$. We note that unlike in \eqref{eq:fusionof0and2}, there is no associator for the TQFT coefficient since both $q_1$ and $q_2$ are trivial.

\section{Crossed Braided Higher Fusion Categories}\label{section:crossedbraided}
This section aims to explain extensions from the point of view of the Picard space $\Pic(\mathscr{S})$ associated to a braided fusion 3-category $\mathscr{S}$. We will be particularly interested in the case 
\be 
\mathscr{S}=\mathcal{Z}(\cVect(A[1]))\,.
\ee
This perspective will be complementary to that, based the Brauer-Picard space $\BrPic(\cVect(A[1]))$, which was used in the previous section to study extensions of $\cVect(A[1])$. From a physical point of view, this is obtaining the Drinfeld center/SymTFT for the $G$-extensions of fusion 3-categories (in particular those involving duality defects) by gauging a $G$ 0-form symmetry of the SymTFT associated to the fusion 3-category lying in the trivial grade. In our physics applications, the trivial grade is occupied by $\cVect(A[1])$, and the SymTFT before $G$-gauging is a DW theory.

After some mathematical preliminaries, we will be able to explicitly compute the category that lives on the twist defects for the $G$ zero-form symmetry in the (4+1)d SymTFT. It follows from the mathematics that such a category has to satisfy certain conditions in order to be compatible with gauging the zero form symmetry, and we will describe the potential choices.

\subsection{Definition and Classification}\label{subsection:crossbraideddefs}

In order to motivate our definition of a $G$-crossed braided fusion 3-category, we first make some remarks about $G$-crossed braided 1-categories \cite[Section 8.24]{EGNO}. Recall that this flavour of highly structured fusion 1-categories appears in physics for instance in the context of global symmetries of (1+1)d chiral conformal field theories, but also of (2+1)d symmetry enriched phases \cite{Barkeshli:2014cna,Barkeshli:2016mew,Barkeshli:2019vtb}. It was shown in \cite{JPR} that the 4-category of $G$-crossed braided 1-categories is equivalent to the 4-category of 3-categories $\mathscr{C}$ equipped with a 3-functor $\mathrm{B}G \rightarrow \mathscr{C}$ that is full on objects and 1-morphisms.
Further, faithfully $G$-graded extensions correspond to functors $\mathrm{B}G \rightarrow \mathscr{C}$ such that $\mathscr{C}$ is \textit{connected} in the sense that there exists a non-zero 2-morphism between any two 1-morphisms.
It is possible to use this higher categorical perspective to recover the extension theory results from \cite{ENO2} (see \cite[Example 1.12]{JPR}). More precisely, let us consider a 3-functor $\mathrm{B}G \rightarrow \mathscr{C}$ that is full on objects and 1-morphisms. Equivalently, we may view this data as a monoidal 2-functor $G\rightarrow \mathfrak{C}$, where $\mathfrak{C}=\Omega\mathscr{C}$. This describes a $G$-crossed braided fusion 1-category if and only if $\mathfrak{C}$ is a locally finite semisimple 2-category. But, if $\mathfrak{C}$ is both locally finite semisimple and connected, then $Cau(\mathfrak{C})$ is equivalent to $\mathbf{Mod}(\mathcal{B})$, where $\mathcal{B}=\Omega\mathfrak{C}$ a braided fusion 1-category. Now, any monoidal 2-functor $\pi: G\rightarrow \mathbf{Mod}(\mathcal{B})$ must factor through the Picard space $\Pic(\mathcal{B})$. Putting this discussion together, we do recover the fact that faithfully $G$-graded extensions of $\mathcal{B}$ are parameterized by (homotopy classes of) maps $\mathrm{B}G\rightarrow \mathrm{B}\Pic(\mathcal{B})$.
In particular, in the special case when $\pi:G\rightarrow \mathbf{Mod}(\mathcal{B})$, the corresponding $G$-crossed braided extension $\mathcal{A}$ of $\mathcal{B}$ is given by  
\be
\mathcal{A}=\bigoplus_{g\in G} \pi(g), \quad \pi(e)=\cB\,,
\ee
exactly as in \cite{ENO2}.

The above discussion motivates the following definition.

\begin{Definition}
    A $G$-crossed braided 3-category is a 5-category $\mathbf{C}$ equipped with a 5-functor from $\mathrm{B}G \rightarrow \mathbf{C}$ that is full on objects and 1-morphisms \footnote{This definition readily generalizes to $n$-categories and the subsequent proof carries over to finite semisimple $n$-categories.}.
\end{Definition}
Physically such a $G$-crossed braided 3-category describes topological 3-dimensional defects in a $G$ 0-form symmetric 5d topological boundary of a $G$ 0-form symmetric 6d TFT whose $G$-symmetric topological boundary conditions form the 5-category $\mathbf{C}$. The functor $\mathrm{B}G \rightarrow \mathbf{C}$ at the level of objects picks such a topological boundary $\mathfrak{B}$. At the level of 1-morphisms it picks a collection of topological 4-dimensional defects $U_g$ in $\mathfrak{B}$ labeled by $g\in G$. The $g$-grade of the $G$-crossed braided 3-category describes topological 3-dimensional ends of $U_g$.

 In fact, we will need to make a further restriction:\ As is familiar in the theory of graded categories, we want to focus our attention on \textit{faithfully} graded categories. For $G$-crossed braided fusion 1-categories, under the equivalence of \cite{JPR}, this is implemented by requiring that the corresponding monoidal 2-category $\mathfrak{C}$ be connected, i.e.\ there exists a non-zero 1-morphism between any two objects.

\begin{Definition}
    A $G$-crossed braided 3-category is faithfully graded if the 5-category $\mathbf{C}$ is connected, that is, there exists a non-zero 2-morphism between any two 1-morphisms.
\end{Definition}

 \begin{Proposition}
    Let $\mathscr{S}$ be a braided fusion 3-category. Faithfully graded $G$-crossed braided extensions of $\mathscr{S}$ are classified by homotopy classes of maps $\mathrm{B}G\rightarrow \mathrm{B}\mathscr{P}ic(\mathscr{S})$.
 \end{Proposition}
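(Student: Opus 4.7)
The plan is to categorify the 1-categorical argument sketched before the proposition. Given a faithfully graded $G$-crossed braided extension $\mathbf{C}$ of $\mathscr{S}$ with structure 5-functor $F:\mathrm{B}G\to\mathbf{C}$, I would first pass to the monoidal 4-category $\mathfrak{C}:=\Omega_{F(*)}\mathbf{C}$. The data of $F$ then becomes a monoidal 3-functor $\pi:G\to\mathfrak{C}$ sending $e$ to the monoidal unit, and the requirement that $F$ be full on objects and 1-morphisms translates to the statement that $\pi$ is essentially surjective at the object level (together with the corresponding fullness on 1-morphisms). Connectedness of $\mathbf{C}$ unpacks to connectedness of $\mathfrak{C}$ as a 4-category, i.e.\ the existence of a non-zero 1-morphism between any two objects. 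The extension condition identifies $\Omega\mathfrak{C}=\Omega^2\mathbf{C}$ with $\mathscr{S}$.

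Next, I would invoke a 4-categorical analogue of the Cauchy completion theorem mentioned in the motivating discussion (as indicated by the footnote attached to the definition, the argument of \cite{JPR} generalizes level-by-level): for a connected finite presemisimple 4-category $\mathfrak{C}$, one has an equivalence $\mathrm{Cau}(\mathfrak{C})\simeq\mathbf{Mod}(\Omega\mathfrak{C})=\mathbf{Mod}(\mathscr{S})$. This lets me view $\pi$ as a monoidal 3-functor $G\to\mathbf{Mod}(\mathscr{S})$. Since $G$ is a group, each $\pi(g)$ admits $\pi(g^{-1})$ as a two-sided tensor inverse, so $\pi$ automatically factors through the Picard subspace $\mathscr{P}ic(\mathscr{S})\subset(\Sigma^2\mathfrak{S})^{\times}$. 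Unwinding this to pointed deloopings yields the desired homotopy class of maps $\mathrm{B}G\to\mathrm{B}\mathscr{P}ic(\mathscr{S})$.

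For the converse, a pointed map $\mathrm{B}G\to\mathrm{B}\mathscr{P}ic(\mathscr{S})$ corresponds to a monoidal 3-functor $\pi:G\to\mathscr{P}ic(\mathscr{S})\hookrightarrow\mathbf{Mod}(\mathscr{S})$. I would then construct the associated 5-category $\mathbf{C}$ as the one-point delooping $\mathrm{B}\mathfrak{C}$, where $\mathfrak{C}$ is the full monoidal sub-4-category of $\mathbf{Mod}(\mathscr{S})$ spanned by $\{\pi(g)\}_{g\in G}$. Connectedness is automatic, and the resulting grading is faithful because $\pi(g)\otimes\pi(h)\simeq\pi(gh)$ and the invertibility of each $\pi(g)$ ensures distinct grades are non-equivalent. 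Checking that these two constructions are mutually inverse up to homotopy reduces to the universal property of Cauchy completion together with the fact that inverting direction for invertible objects is compatible with the monoidal structure on $\mathscr{P}ic(\mathscr{S})$.

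The main obstacle is purely foundational: one needs a robust 4-categorical analogue of the equivalence $\mathrm{Cau}(\mathfrak{C})\simeq\mathbf{Mod}(\Omega\mathfrak{C})$ for connected finite presemisimple $\mathfrak{C}$, as well as the unpacking of a 5-functor $\mathrm{B}G\to\mathbf{C}$ into a monoidal 3-functor $G\to\Omega\mathbf{C}$. Both are expected to hold in direct analogy with the results of \cite{JPR,D9} and the lemma \cite[Lemma 2.16]{JFR} invoked earlier in the paper, but the explicit verification at this categorical level is technically involved. Once that machinery is granted, the proof becomes formal: everything reduces to the observation that homomorphisms from a group into a monoidal higher category land in invertible objects, and hence factor through the Picard space.
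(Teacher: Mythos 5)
Your proposal follows essentially the same route as the paper's proof: recast the $5$-functor $\mathrm{B}G\to\mathbf{C}$ as a monoidal functor out of $G$ into $\Omega\mathbf{C}$, use connectedness to identify the Cauchy completion of $\Omega\mathbf{C}$ with $\mathbf{Mod}(\mathscr{S})$, and observe that a homomorphism from a group must land in the invertible objects $\mathbf{Mod}(\mathscr{S})^{\times}=\mathscr{P}ic(\mathscr{S})$, with the converse obtained by running the construction in reverse. The only quibble is terminological (the functor $G\to\Omega\mathbf{C}$ is a monoidal $4$-functor, not a $3$-functor), and you correctly flag the same foundational input (the higher Cauchy-completion equivalence) that the paper also takes as given.
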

\begin{proof}
We fix a faithfully graded $G$-crossed braided fusion 3-category, that is, a 5-category $\mathbf{C}$ equipped with a 5-functor from $F:\mathrm{B}G \rightarrow \mathbf{C}$ that is full on objects and 1-morphisms, such that the $Hom$-3-categories of $\mathbf{C}$ are all finite semisimple and non-zero. Equivalently, we may recast the 5-functor $F$ as a monoidal 4-functor $G \rightarrow \Omega\mathbf{C}$ where $\mathscr{S}$ is the braided fusion 3-category corresponding to the trivially graded component of the crossed braided extension, i.e.\ $\mathscr{S}=End_{\Omega\mathbf{C}}(F(e))$. Then, we can consider the Cauchy completion $Cau(\Omega\mathbf{C})$ of $\Omega\mathbf{C}$. By definition, we have that $Cau(\Omega\mathbf{C})$ is a finite semisimple 4-category. But, because $\mathbf{C}$ is connected, we have $Cau(\Omega\mathbf{C})\simeq \Sigma\mathscr{S}=\mathbf{Mod}(\mathscr{S})$. It then follows that $\mathbf{Mod}(\mathscr{S})^{\times} = \mathscr{P}ic(\mathscr{S})$, so that the functor $F$ provides us with a map of spaces $\mathrm{B}G\rightarrow \mathrm{B}\mathscr{P}ic(\mathscr{S})$, that is precisely the desired extension data. The above procedure may also be run in reverse, concluding the proof.
\end{proof}

The space $\mathscr{P}ic(\mathscr{S})$ is the Picard space of $\mathscr{S}$, and we also write $Pic(\mathscr{S}):=\pi_0(\mathscr{P}ic(\mathscr{S}))$, the Picard group of $\mathscr{S}$. The homotopy groups of $\mathscr{P}ic(\mathscr{S})$ are given as follows: \be
\begin{tabular}{|c|c|c|c|c|c|}
\hline
$\pi_0$ & $\pi_1$ & $\pi_2$ & $\pi_3$ & $\pi_4$\\
\hline \\[-1em]
$Pic(\mathscr{S})$ & $Inv(\mathscr{S})$ & $Inv(\Omega\mathscr{S})$ & $Inv(\Omega^2\mathscr{S})$ & $\mathbb{C}^{\times}$\\
\hline
\end{tabular}
\ee
In particular, if we take $\mathscr{S}:=\mathcal{Z}(\cVect(A[1]))=\Sigma \mathbf{2Vect}^\varsigma(A \oplus \widehat{A}[0])$, it follows from proposition \ref{prop:BrPic3VectA[1]} that we have: 
\be
\begin{tabular}{|c|c|c|c|c|c|}
\hline
$\pi_0$ & $\pi_1$ & $\pi_2$ & $\pi_3$ & $\pi_4$\\
\hline \\[-1em]
$Pic(\mathscr{S})$ & $0$ & $A\oplus \widehat{A}$ & $0$ & $\mathbb{C}^{\times}$\\
\hline
\end{tabular}
\ee
Physically speaking, the $n$-th homotopy groups give the $(4-n)$-dimensional topological operators. By definitions, there are no lines and by non-degeneracy all 3d membranes are condensation defects. Therefore, both $\pi_1$ and $\pi_3$ are trivial, while $\pi_2$ gives the surface defects in the Dijkgraaf-Witten theory.

For later use, we unpack the data of a homotopy class of map $\mathrm{B}G\rightarrow \mathrm{B}\mathscr{P}ic(\Sigma \mathbf{2Vect}^\varsigma(A \oplus \widehat{A}[0]))$. They consist of the following data and conditions:
\begin{enumerate}
    \item A map of groups $G\rightarrow Pic(\Sigma \mathbf{2Vect}^\varsigma(A \oplus \widehat{A}[0]))$ such that a certain obstruction class in $H^4(G[1];A\oplus \widehat{A})$ vanishes.
    \item An class $\alpha$ in (a torsor over) $H^3(G[1];A\oplus \widehat{A})$ such that a certain obstruction class in $H^6(G[1];\mathbb{C}^{\times})$ vanishes.\footnote{The group homomorphism $G\rightarrow Pic(\Mod( \mathbf{2Vect}^\varsigma(A \oplus \widehat{A}[0])))$ induces an action of $G$ on $A\oplus\widehat{A}$, and the first cohomology group is interpreted with this action.}
    \item A class $\tau$ in (a torsor over) $H^5(G[1];\mathbb{C}^{\times})$.
\end{enumerate}
The physical interpretation of these were given already in section \ref{sec:BPS}. 

Finally, we comment on the precise relation between the two extension theories that we have been discussing. Let $\mathscr{S}$ be a braided fusion 3-category, and $G$ a finite group. We generalize \cite{ENO2} to $G$-crossed braided extensions of $\mathscr{S}$ and show that they are parameterised by homotopy classes of maps \be
\mathrm{B}G\rightarrow \mathrm{B}\mathscr{P}ic(\mathscr{S})\,.
\ee
In the special case when $\mathscr{S}=\mathcal{Z}(\mathscr{C})$ is the Drinfeld center of the fusion 3-category $\mathscr{C}$, then it follows from a 3-categorical version of \cite{GNN} that the $G$-equivariantization of any $G$-crossed braided extension of $\mathcal{Z}(\mathscr{C})$ is a Drinfeld center. More precisely, in this case, the space $\mathrm{B}\mathscr{P}ic(\mathcal{Z}(\mathscr{C}))$ is equivalent to $\mathrm{B}\mathscr{B}r\mathscr{P}ic(\mathscr{C})$, and, as we have seen above, the latter parameterises group-graded extensions of the fusion 3-category $\mathscr{C}$. Therefore, given a map $\mathrm{B}G\rightarrow \mathrm{B}\mathscr{P}ic(\mathcal{Z}(\mathscr{C}))$, we can consider the corresponding $G$-crossed braided extension $\mathscr{A}$ of $\mathcal{Z}(\mathscr{C})$, as well as the $G$-graded extension $\mathscr{D}$ of $\mathscr{C}$. Then, we have that the equivariantization or \textit{gauging} of $\mathscr{A}$ by its $G$-action is equivalent as a braided fusion 3-category to the Drinfeld center $\mathcal{Z}(\mathscr{D})$.

\subsection{Twisted Graded Witt}\label{subsection:twistedwitt}

We wish to understand the structure of the space $\mathrm{B}\Pic(\mathcal{Z}(\cVect(A[1])))$, and especially its fundamental group. We will do so at a slightly broader level of generality by considering $A$ an arbitrary finite abelian group and $s$ a syllepsis on $A$, so that $\mathbf{2Vect}^s(A[0])$ is a sylleptic strongly fusion 2-categories.\footnote{More generally, we could also consider the case of $\mathbf{2Vect}^{(b,s)}(A[0])$ when the braiding $b$ is also allowed to be non-trivial.} It follows from the results of section \ref{sec:HigherMorita} that the group $Pic(\Sigma\mathbf{2Vect}^s(A[0]))$ is the Witt group of (suitable invertible) braided $A$-graded fusion 1-categories, whose Deligne tensor product is twisted by the syllepsis $s$. More precisely, we have \be
Pic(\Sigma\mathbf{2Vect}^s(A[0])) \cong \pi_0(\mathsf{Mor}_2^{sep}(\mathbf{2Vect}^s(A[0])))
\ee
as groups, and below we will unpack the definition of the right hand-side in the spirit of \cite{DMNO,BJSS}.

We begin by spelling out some of the structure of the bare 4-category $\mathsf{Mor}_2^{sep}(\mathbf{2Vect}^s(A[0]))$. We observe that this does not depend on the syllepsis $s$. Firstly, objects are by definition $A$-graded braided multifusion 1-categories. Secondly, by a slight generalization of \cite{BJS}, 1-morphisms are given by $A$-graded central multifusion 1-categories. In order to make this precise, we use the following definitions.

\begin{Definition}
Let $\mathcal{B}$ be an $A$-graded braided multifusion 1-category. An $A$-graded $\mathcal{B}$-central multifusion 1-category is an $A$-graded multifusion 1-category $\mathcal{C}$ equipped with a braided functor $F:\mathcal{B}\rightarrow\mathcal{Z}(\mathcal{C})$ that is compatible with the $A$-gradings.
\end{Definition}

\begin{Definition}
Let $\mathcal{C}$ be a $\mathcal{B}$-central $A$-graded multifusion 1-category. We write $\mathcal{Z}_{(2)}(\mathcal{C},\mathcal{B})$ for the centralizer of the image of $\mathcal{B}$ in $\mathcal{Z}(\mathcal{C})$, which is an $A$-graded braided multifusion 1-category.
\end{Definition}

\noindent A 1-morphism $\mathcal{B}_1\nrightarrow\mathcal{B}_2$ in $\mathsf{Mor}_2^{sep}(\mathbf{2Vect}^s(A[0]))$ between two $A$-graded braided multifusion 1-categories $\mathcal{B}_1$ and $\mathcal{B}_2$ is an $A$-graded $\mathcal{B}_1$-central multifusion 1-category $\mathcal{C}$ equipped with a braided functor $\mathcal{B}_2^{rev}\rightarrow \mathcal{Z}_{(2)}(\mathcal{C},\mathcal{B}_1)$ that is compatible with the $A$-gradings.\footnote{It may be tempting to succinctly rewrite this definition as the data of a $\mathcal{B}_1\boxtimes\mathcal{B}_2^{rev}$-central multifusion 1-category. However, because we will consider twisted Deligne tensor products below, we prefer not to use plain Deligne tensor products so as to avoid confusions.} For our purposes, it is crucial to understand when two objects of $\mathsf{Mor}_2^{sep}(\mathbf{2Vect}^s(A[0]))$ are equivalent. This is spelled out in the following definition.

\begin{Definition}
Two $A$-graded braided fusion 1-categories $\mathcal{B}_1$ and $\mathcal{B}_2$ are $A$-Witt equivalent if there exists a $\mathcal{B}_1$-central $A$-graded fusion 1-category $\mathcal{C}$ with fully faithful braided functor $F:\mathcal{B}\rightarrow \mathcal{Z}(\mathcal{C})$ together with a grading preserving equivalence 
\be
\mathcal{Z}_{(2)}(\mathcal{C},\mathcal{B}_1)^{rev}\simeq \mathcal{B}_2\,.
\ee
We say that a braided $A$-graded fusion 1-category is $A$-Witt trivial if it is $A$-Witt equivalent to $\mathbf{Vect}$.
\end{Definition}

\noindent The next lemma follows from the definition, or may also be proved directly as in \cite{DMNO}.

\begin{Lemma}
The notion of being $A$-Witt equivalent is an equivalence relation on $A$-graded braided multifusion 1-categories.
\end{Lemma}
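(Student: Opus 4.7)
The plan is to adapt the classical argument of \cite{DMNO}, that ordinary Witt equivalence is an equivalence relation, to the $A$-graded, $s$-twisted setting, replacing the Deligne tensor product by its $s$-twisted variant throughout and tracking the $A$-grading at each step. The three required properties correspond to three standard witnesses.

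For reflexivity, I would take $\mathcal{C}=\mathcal{B}$ regarded as an $A$-graded fusion 1-category (forgetting the braiding), together with the canonical fully faithful grading-preserving braided embedding $F:\mathcal{B}\to\mathcal{Z}(\mathcal{B})$ using the braiding of $\mathcal{B}$ itself as the half-braiding. The opposite embedding $\mathcal{B}^{rev}\to\mathcal{Z}(\mathcal{B})$ factors through $\mathcal{Z}_{(2)}(\mathcal{B},\mathcal{B})$, and the hypothesis that $\mathcal{B}$ represents an invertible object of $\mathsf{Mor}_2^{sep}(\mathbf{2Vect}^s(A[0]))$, which by section \ref{sec:HigherMorita} is precisely the appropriate $s$-twisted graded non-degeneracy condition, forces this inclusion to be an equivalence. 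For symmetry, given $\mathcal{B}_1\sim\mathcal{B}_2$ witnessed by $\mathcal{C}$, I would use $\mathcal{C}^{mop}$, the same $A$-graded fusion 1-category with reversed tensor product, together with the canonical braided equivalence $\mathcal{Z}(\mathcal{C}^{mop})\simeq\mathcal{Z}(\mathcal{C})^{rev}$: the identification $\mathcal{B}_2^{rev}\simeq \mathcal{Z}_{(2)}(\mathcal{C},\mathcal{B}_1)$ transports to a grading-preserving fully faithful braided embedding $\mathcal{B}_2\hookrightarrow\mathcal{Z}(\mathcal{C}^{mop})$, and taking centralizers on both sides then yields $\mathcal{Z}_{(2)}(\mathcal{C}^{mop},\mathcal{B}_2)^{rev}\simeq\mathcal{B}_1$, so that $\mathcal{B}_2\sim\mathcal{B}_1$ via $\mathcal{C}^{mop}$.

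For transitivity, given $\mathcal{B}_1\sim\mathcal{B}_2$ via $\mathcal{C}$ and $\mathcal{B}_2\sim\mathcal{B}_3$ via $\mathcal{D}$, I would form the $A$-graded relative twisted Deligne tensor product $\mathcal{E}:=\mathcal{C}\boxtimes_{\mathcal{B}_2}\mathcal{D}$, where $\mathcal{B}_2$ acts on $\mathcal{C}$ centrally through the identification $\mathcal{B}_2\simeq\mathcal{Z}_{(2)}(\mathcal{C},\mathcal{B}_1)^{rev}\hookrightarrow\mathcal{Z}(\mathcal{C})$ arising from the symmetry step, and on $\mathcal{D}$ through its given central embedding. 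Because both actions are central and grading-compatible, $\mathcal{E}$ inherits an $A$-graded fusion structure and a canonical fully faithful braided embedding $\mathcal{B}_1\to\mathcal{Z}(\mathcal{E})$ descended from that of $\mathcal{C}$. Cancelling the two copies of $\mathcal{B}_2$ against their respective centralizers then produces the required equivalence $\mathcal{Z}_{(2)}(\mathcal{E},\mathcal{B}_1)^{rev}\simeq\mathcal{B}_3$.

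The main obstacle will be transitivity: one must verify both that the relative twisted tensor product remains genuinely fusion (rather than only multifusion) and that the Müger-style cancellation goes through in the graded, $s$-twisted setting. Rather than grind out these computations from scratch, the cleanest route is to appeal to the framework of section \ref{sec:HigherMorita} and deduce the lemma abstractly from the corollary that $Pic(\Sigma\mathbf{2Vect}^s(A[0]))\cong\pi_0(\mathsf{Mor}_2^{sep}(\mathbf{2Vect}^s(A[0])))$ is an honest group. Morita equivalence on invertible objects in $\mathsf{Mor}_2^{sep}$ is then automatically an equivalence relation coming from composition of 1-morphisms, and one need only verify that this Morita equivalence, unpacked in concrete categorical terms, reproduces the definition of $A$-Witt equivalence given above; the fusion (as opposed to multifusion) hypothesis can be arranged by restricting to an appropriate simple summand of the relative tensor product, using faithfulness of the $A$-grading.
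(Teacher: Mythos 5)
Your proposal is correct in outline and follows exactly the route the paper itself indicates: the paper's entire justification is the one-line remark that the lemma ``follows from the definition, or may also be proved directly as in \cite{DMNO}'', and your reflexivity/symmetry/transitivity argument is precisely the direct adaptation of \cite{DMNO}, while your fallback through $\pi_0(\mathsf{Mor}_2^{sep}(\mathbf{2Vect}^s(A[0])))$ is the ``follows from the definition'' reading. The only refinement worth recording is that reflexivity needs no invertibility hypothesis---M\"uger's centralizer theorem already gives $\mathcal{Z}_{(2)}(\mathcal{B},\mathcal{B})\simeq\mathcal{B}^{rev}$ for an arbitrary braided fusion 1-category---whereas invertibility of the middle term $\mathcal{B}_2$ is genuinely where the transitivity step leans, exactly as you flag.
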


We will now begin to describe the monoidal structure on the 4-category $\mathsf{Mor}_2^{sep}(\mathbf{2Vect}^s(A[0]))$. This is an $s$-twisted variant of the familiar Deligne tensor product. More precisely, given two braided $A$-graded multifusion 1-categories $\mathcal{B}_1$ and $\mathcal{B}_2$ with braidings $\beta_1$ and $\beta_2$ respectively, we set \be
\mathcal{B}_1\boxtimes_A^{s}\mathcal{B}_2:= \mathcal{B}_1\boxtimes\mathcal{B}_2
\ee
with $A$-grading given by the sum of the gradings on $\mathcal{B}_1$ and $\mathcal{B}_2$, with obvious monoidal structure, and with braiding given on simple objects $B_1,C_1\in\mathcal{B}_1$ and $B_2,C_2\in\mathcal{B}_2$ by \be
\beta(B_1\boxtimes B_2, C_1\boxtimes C_2):= \beta_1(B_1,C_1)\boxtimes\beta_2(B_2,C_2)\cdot s(\mathrm{gr}(B_2),\mathrm{gr}(C_1))\,,
\ee
where $\mathrm{gr}(B_2), \mathrm{gr}(C_1)\in A$ denote the gradings of these simple objects.

We will also need the $s$-twisted variant of the reverse of a braided $A$-graded fusion 1-category. More precisely, given a braided $A$-graded fusion 1-category $\mathcal{B}$ with braiding $\beta$, the underlying $A$-graded multifusion 1-category of $\mathcal{B}^{s-rev}$ is simply given by $\mathcal{B}$, the braiding is given on simple objects $B,C\in\mathcal{B}$ by 
\be
\beta^{s-rev}(B,C):= \beta(C,B)^{-1}\cdot s(\mathrm{gr}(B),\mathrm{gr}(C))\,.
\ee

\begin{Definition}
We say that a braided $A$-graded fusion 1-category $\mathcal{B}$ is $s$-$A$-Witt invertible, or $s$-invertible for short, if $\mathcal{B}\boxtimes_A^s\mathcal{B}^{s-rev}$ is $A$-Witt trivial.
\end{Definition}

\begin{Remark}
The idea behind the above definition is that $\mathcal{B}$ is an object of a monoidal 4-category that has duals. Therefore, if it has an inverse, this inverse must coincide with its dual.
\end{Remark}

\begin{Definition}
The Witt group $\Witt(A,s)$ of $s$-invertible braided $A$-graded fusion 1-categories is the quotient of the monoid of $s$-invertible braided $A$-graded fusion 1-categories under the relation of $A$-graded Witt equivalence.
\end{Definition}

The next result follows from our definitions.

\begin{Proposition}
The Witt group $\Witt(A,s)$ is isomorphic to $Pic(\mathbf{2Vect}^s(A[0]))$.
\end{Proposition}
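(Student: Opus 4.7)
The plan is to reduce the claim to an unpacking exercise using the theorem earlier in the excerpt identifying $\mathsf{Mor}_2^{sep}(\mathfrak{S})$ with $\mathbf{Mod}(\Sigma\mathfrak{S})$. Specializing $\mathfrak{S}=\mathbf{2Vect}^s(A[0])$ and taking Picard spaces, the associated corollary already gives a group isomorphism $Pic(\Sigma\mathbf{2Vect}^s(A[0]))\cong\mathcal{W}itt(\mathbf{2Vect}^s(A[0]))$. It therefore suffices to identify $\mathcal{W}itt(\mathbf{2Vect}^s(A[0]))$, defined via invertible objects of the Morita $4$-category $\mathsf{Mor}_2^{sep}(\mathbf{2Vect}^s(A[0]))$ up to $1$-morphism equivalence, with the explicitly presented group $\Witt(A,s)$.

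First I would translate the algebraic vocabulary. Rigid algebras in $\mathbf{2Vect}(A[0])$ are $A$-graded multifusion $1$-categories (by \cite{D8,D9}), so equipping such an algebra with a braided structure using the sylleptic $2$-category $\mathbf{2Vect}^s(A[0])$ amounts to endowing the corresponding $A$-graded multifusion $1$-category with a braiding whose interaction with the $A$-grading is controlled by $s$. Once this identification is in hand, I would check that the monoidal structure on $\mathsf{Mor}_2^{sep}(\mathbf{2Vect}^s(A[0]))$ coincides with the $s$-twisted Deligne tensor product $\boxtimes_A^s$: the external multiplication of two braided algebras in a sylleptic $2$-category inserts a copy of $s$ each time one crosses the middle factors, which produces exactly the formula $\beta(B_1\boxtimes B_2,C_1\boxtimes C_2)=\beta_1(B_1,C_1)\boxtimes\beta_2(B_2,C_2)\cdot s(\mathrm{gr}(B_2),\mathrm{gr}(C_1))$ defining $\boxtimes_A^s$. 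The dual of a braided algebra is then the reverse-braided algebra, and the same syllepsis insertion turns this into the $s$-twisted reverse $\mathcal{B}^{s\text{-}rev}$.

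Next I would match the notions of invertibility. In a monoidal $n$-category with duals, an object is invertible iff the evaluation map from $\mathcal{B}\otimes\mathcal{B}^\vee$ to the unit is an equivalence; applying this in $\mathsf{Mor}_2^{sep}(\mathbf{2Vect}^s(A[0]))$ with the dual identified as $\mathcal{B}^{s\text{-}rev}$ gives exactly the condition that $\mathcal{B}\boxtimes_A^s\mathcal{B}^{s\text{-}rev}$ be trivial in the Morita $4$-category, i.e.\ $A$-Witt trivial. Finally, two invertible objects of $\mathsf{Mor}_2^{sep}(\mathbf{2Vect}^s(A[0]))$ are equivalent iff there is an invertible $1$-morphism between them, which, after unpacking the description of $1$-morphisms as $A$-graded $\mathcal{B}_1$-central multifusion categories equipped with a braided functor from $\mathcal{B}_2^{rev}$ into the relative centralizer, is precisely the definition of $A$-graded Witt equivalence.

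The main technical obstacle will be verifying that the monoidal product of braided algebras in the sylleptic $2$-category $\mathbf{2Vect}^s(A[0])$ really does contribute the syllepsis factor $s(\mathrm{gr}(B_2),\mathrm{gr}(C_1))$ as claimed; this is essentially a string-diagram calculation with hexagonators and the syllepsis modification, and the cleanest way to do it is to invoke the general recipe for combining braided algebras in a sylleptic $2$-category (a decategorification of Dunn additivity), so that no ad hoc coherence check is needed. Once this is in place, the remaining identifications of duals, invertibility, and Witt equivalence follow by applying standard duality arguments as in \cite{DMNO,BJSS} with the twisted braiding and reverse throughout.
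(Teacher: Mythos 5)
Your proposal follows essentially the same route as the paper: the paper states only that the result ``follows from our definitions,'' because the definitions of $\Witt(A,s)$, the $s$-twisted Deligne product, the $s$-twisted reverse, and $A$-Witt equivalence were set up in section 5.2 precisely as the unpacking of $\pi_0(\mathsf{Mor}_2^{sep}(\mathbf{2Vect}^s(A[0])))$, which the Theorem and Corollary of section 3.4 identify with $Pic(\Sigma\mathbf{2Vect}^s(A[0]))$. You make the same reduction and simply spell out the matching of objects, monoidal structure, duals, invertibility, and equivalence that the paper leaves implicit, so your argument is correct and consistent with the paper's.
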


\subsubsection{Pointed Twisted Witt Group}

We will not attempt to describe the complete structure of the group $\Witt(A,s)$. Instead, we will focus on the subgroup $\Witt^{pt}(A, s)$ generated by the $s$-invertible pointed braided $A$-graded fusion 1-categories. The data of a pointed braided $A$-graded fusion 1-category is completed captured by the notion of an $A$-graded premetric group, and we will spell out below what the condition of being $s$-invertible amounts to. We begin by recalling two definitions from \cite[Appendix A]{DGNO} for the reader's convenience.

\begin{Definition}\label{deg:quadraticform}
Let $G$ be a finite abelian group. A quadratic form on $G$ with value in $\mathbb{C}^{\times}$ is a function $q:G\rightarrow \mathbb{C}^{\times}$ such that $q(g) = q(g^{-1})$ for every $g\in G$, and the associated symmetric function 
\be
\mathsf{Bil}(q)(G[1];h):=\frac{q(gh)}{q(g)q(h)}
\ee 
is bilinear.\footnote{Sometimes this condition is also referred to being a \textit{bicharacter}.}
\end{Definition}
 For any quadratic form $q:G\rightarrow \mathbb{C}^{\times}$, any integer $n$, and any element $g\in G$, we have 
 \be
 q(n\cdot g) = q(g)^{n^2}\,.
 \ee

 \begin{Definition}
A premetric group is a pair $(G[1];q)$ consisting of a finite abelian group $G$ and a quadratic form $q$. A metric group is a premetric group $(G[1];q)$ whose associated bilinear form $\mathsf{Bil}(q)$ is non-degenerate.
\end{Definition}

\begin{Definition}
An $A$-graded premetric group is a finite abelian group $G$ equipped with a quadratic form $q$, and a group homomorphism $f:G\rightarrow A$. We write $(G_0,q_0)$ for the premetric group $Ker(f)$ equipped with the obvious quadratic form $q_0=q|_{G_0}$.
\end{Definition}

Let $\mathcal{B}$ be a braided pointed $A$-graded fusion 1-category, i.e. an $A$-graded fusion 1-category equipped with a braiding, such that all simple objects are invertible.
The corresponding $A$-graded premetric group is $Inv(\mathcal{B})$, the group of invertible objects of $\mathcal{B}$, equipped with the quadratic form 
\be
q(B):= \beta(B,B)\cdot s(\mathrm{gr}(B),\mathrm{gr}(B)).
\ee

\begin{Remark}
A conceptual reason for the above definition is that in a sylleptic monoidal 2-category, the only reasonable way of trivializing the double braiding is the syllepsis. But such a trivialization is needed for the definition of a quadratic form.
\end{Remark}

The following lemma is clear

\begin{Lemma}
Pointed braided $A$-graded fusion 1-categories are classified by $A$-graded premetric groups.
\end{Lemma}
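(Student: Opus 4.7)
The plan is to extend the classical Joyal--Street (equivalently, Eilenberg--MacLane) correspondence between pointed braided fusion 1-categories and premetric groups to the $A$-graded setting. Given a pointed braided $A$-graded fusion 1-category $\mathcal{B}$, I would set $G := \mathrm{Inv}(\mathcal{B})$, which is a finite abelian group under tensor product, let $f : G \to A$ be the homomorphism recording the $A$-grading of each invertible simple object, and define the quadratic form $q$ by the formula stated just above, namely $q(B) := \beta(B,B) \cdot s(\mathrm{gr}(B), \mathrm{gr}(B))$. The nontrivial verification is that this $q$ satisfies the axioms of a quadratic form: the identity $q(B) = q(B^{-1})$ follows from naturality of $\beta$ together with the invariance property $s(a,a) = s(-a,-a)$ (a consequence of $s$ being a sylleptic 2-form on $A[3]$, for which the underlying function on $A$ is automatically quadratic in $a$), while bilinearity of $\mathsf{Bil}(q)$ follows from the hexagon axioms for $\beta$ combined with the cocycle identities satisfied by the syllepsis $s$.

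For the inverse construction, given an $A$-graded premetric group $(G, q, f)$, I would take the underlying pointed fusion 1-category to be $\mathbf{Vect}(G)$ with associator a 3-cocycle $\alpha \in Z^3(G; \mathbb{C}^\times)$, equip it with a braiding determined by a function $c : G \times G \to \mathbb{C}^\times$ satisfying the abelian (hexagon) cocycle condition, and pull back the grading along $f$. The classical Eilenberg--MacLane theorem identifies equivalence classes of such pairs $(\alpha, c)$ with $H^4(K(G,2); \mathbb{C}^\times)$, which is canonically isomorphic to the group of quadratic forms on $G$. I would select the equivalence class of $(\alpha, c)$ whose associated Joyal--Street quadratic form $g \mapsto c(g,g)$ equals $q(g) \cdot s(f(g), f(g))^{-1}$, so that after reintroducing the $s$-twist through the grading, the extracted quadratic form is exactly $q$.

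Finally, I would verify that these two assignments are mutually inverse at the level of equivalence classes, which is essentially immediate once the correspondence is set up, since an equivalence of pointed braided $A$-graded categories preserves all of the data $(G, q, f)$, and conversely the Eilenberg--MacLane theorem lifts an equality of quadratic forms (combined with an isomorphism of the abelian groups commuting with $f$) to an equivalence of categories. The main obstacle throughout is careful bookkeeping of the $s$-twist: one must check that the formula defining $q$ actually lands in the set of quadratic forms (in the untwisted sense of Definition~\ref{deg:quadraticform}), which hinges on compatibility between the hexagonators of $\beta$ and the sylleptic cocycle conditions on $s$. This compatibility is precisely what is emphasized in the Remark preceding the Lemma, and once it is in hand the remainder of the argument is a routine unwinding of the classical Joyal--Street construction.
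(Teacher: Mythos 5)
Your proposal is correct and matches the paper's (implicit) argument: the paper states this lemma without proof, declaring it clear, and the intended justification is exactly the Joyal--Street/Eilenberg--MacLane correspondence between pointed braided fusion categories and premetric groups, augmented by the grading homomorphism $f:\mathrm{Inv}(\mathcal{B})\rightarrow A$ and the $s$-twist of the diagonal braiding that you track. Your version simply spells out the bookkeeping (in particular, that $g\mapsto s(f(g),f(g))$ is itself a pullback quadratic form, so twisting by it preserves the quadratic-form axioms) that the paper leaves to the reader.
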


\begin{Definition}
 An $A$-graded metric group is an $A$-graded premetric group whose underlying premetric group is metric, i.e.\ the quadratic form is non-degenerate.
\end{Definition}

\begin{Remark}
Let $(f:G\rightarrow A,q)$ be an $A$-graded metric group. Then $(G_0,q_0)$ does not have to be a metric group. However, we do have the following lemma.
\end{Remark}

\begin{Lemma}\label{lem:0metricdecomposition}
Let $(f:G\rightarrow A,q)$ be an $A$-graded premetric group. If $H\subseteq G_0$ is such that $q|_H$ is non-degenerate, then there exists a decomposition of $A$-graded premetric groups 
\be 
(f:G\rightarrow A,q)\cong (H\xrightarrow{0} A,q_H)\oplus (f:H^{\bot}\rightarrow A, q|_{H^{\bot}})
\ee 
where $H^{\bot}$ is the orthogonal complement of $H$ in $G$ with respect to the bilinear form $\mathsf{Bil}(q)$.
\end{Lemma}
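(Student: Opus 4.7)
The plan is to reduce the statement to the standard orthogonal decomposition of an abelian group equipped with a non-degenerate bilinear form, then verify that the decomposition is compatible with both the quadratic form and the $A$-grading.

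First I would argue that, as abelian groups, $G \cong H \oplus H^{\bot}$. Non-degeneracy of $q|_H$ means that the associated bilinear form $\mathsf{Bil}(q)|_{H\times H}$ induces an isomorphism $H \xrightarrow{\sim} \widehat{H}$ sending $h$ to $\mathsf{Bil}(q)(h,-)|_H$. Given any $g\in G$, the character $\mathsf{Bil}(q)(g,-)|_H \in \widehat{H}$ lifts uniquely to some $h\in H$, and then $g - h \in H^{\bot}$ by construction. This establishes $G = H + H^{\bot}$, and simultaneously $H\cap H^{\bot}$ is trivial because any element of the intersection pairs to $1$ with all of $H$, so by non-degeneracy is zero. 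Hence $G \cong H \oplus H^{\bot}$ canonically.

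Next I would verify that the quadratic form splits orthogonally. For $h\in H$ and $k\in H^{\bot}$, using the fact that $q$ is a quadratic form with associated bilinear form $\mathsf{Bil}(q)$ (see Definition \ref{deg:quadraticform}), we compute
\begin{equation*}
q(h+k) \;=\; q(h)\,q(k)\,\mathsf{Bil}(q)(h,k) \;=\; q(h)\,q(k),
\end{equation*}
since $\mathsf{Bil}(q)(h,k)=1$ by definition of $H^{\bot}$. Thus $q$ is the external orthogonal sum of $q|_H$ and $q|_{H^{\bot}}$ under the identification $G\cong H\oplus H^{\bot}$.

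Finally, I would check the compatibility with the $A$-gradings. Since by hypothesis $H\subseteq G_0 = \ker(f)$, the restriction of $f$ to $H$ is identically zero, so the grading on the $H$-summand is the trivial homomorphism $H\xrightarrow{0} A$, and the grading on the $H^{\bot}$-summand is simply the restriction of $f$. Combining the three steps yields the claimed isomorphism of $A$-graded premetric groups. There is no real obstacle here beyond the routine verifications; the only point requiring any care is confirming that the splitting $g\mapsto (h, g-h)$ is simultaneously a splitting of abelian groups, an orthogonal splitting of the quadratic form, and a splitting of the grading homomorphism $f$, all of which follow directly from $H\subseteq G_0$ together with non-degeneracy of $q|_H$.
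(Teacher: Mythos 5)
Your proof is correct: the paper states this lemma without giving a proof, and your argument is exactly the standard one a reader would supply — use non-degeneracy of $\mathsf{Bil}(q)|_{H\times H}$ to produce the retraction $G\to H$, deduce $G=H\oplus H^{\bot}$, then check that the quadratic form splits via $q(h+k)=q(h)q(k)\mathsf{Bil}(q)(h,k)$ and that the grading splits because $H\subseteq G_0=\ker(f)$. No gaps; all three compatibilities (group, quadratic form, grading) are verified.
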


\begin{Definition}
Let $(f:G\rightarrow A,q)$ be an $A$-graded metric group. A subgroup $H\subseteq G_0$ is called isotropic if $q|_H=\mathsf{triv}$. It is called Lagrangian if, in addition, $H^{\bot} = H$.
\end{Definition}

\begin{Definition}
An $A$-graded premetric group $(f:G\rightarrow A,q)$ is $A$-trivial if it is a metric group, i.e.\ $q$ is non-degenerate, and there exists a Lagrangian subgroup $H\subseteq G_0$.
\end{Definition}

The following lemma follows easily from the fact that forgetting the grading defines a non-monoidal 4-functor $\mathsf{Mor}_2^{sep}(\mathbf{2Vect}^s(A[0]))\rightarrow \mathsf{Mor}_2^{sep}(\mathbf{2Vect})$ together with the corresponding well-known statement in the case $A=0$ \cite{DMNO}.

\begin{Lemma}\label{lem:pointedAWitttrivial}
A pointed $A$-graded braided fusion 1-category $\mathcal{B}$ is $A$-Witt trivial if and only if the corresponding $A$-graded premetric group is $A$-trivial.
\end{Lemma}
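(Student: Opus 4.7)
The plan is to invoke the hinted forgetful 4-functor $U\colon \mathsf{Mor}_2^{sep}(\mathbf{2Vect}^s(A[0])) \to \mathsf{Mor}_2^{sep}(\mathbf{2Vect})$, which sends $A$-Witt equivalences to ungraded Witt equivalences, combined with the classical result from \cite{DMNO}: a pointed braided fusion 1-category with premetric group $(G,q)$ is Witt trivial if and only if $q$ is non-degenerate and $G$ contains a Lagrangian subgroup. Under the classical dictionary between Lagrangian subgroups $L \subseteq G$ and Lagrangian étale algebras $\mathcal{A}_L = \bigoplus_{l \in L} \delta_l$ in $\mathcal{B}$, the role of the $A$-grading is to force the relevant algebra into the trivially graded component $\mathcal{B}_0$, which is equivalent to $L \subseteq G_0$.

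For the direction $(\Leftarrow)$, given a Lagrangian $H \subseteq G_0$, the algebra $\mathcal{A}_H$ lies entirely in $\mathcal{B}_0$. The fusion category $\mathcal{C} := \mathcal{B}_{\mathcal{A}_H}$ of right $\mathcal{A}_H$-modules in $\mathcal{B}$ inherits an $A$-grading from $\mathcal{B}$ because $\mathcal{A}_H$ is trivially graded, and the canonical braided functor $\mathcal{B} \to \mathcal{Z}(\mathcal{C})$ is fully faithful and grading-compatible. By the classical DMNO argument, using that $H$ is Lagrangian in $G$ with $q$ non-degenerate, the centralizer $\mathcal{Z}_{(2)}(\mathcal{C}, \mathcal{B})^{rev}$ is equivalent to $\mathbf{Vect}$ as a braided fusion 1-category; both sides being concentrated in grade $0$, this equivalence is automatically grading-preserving. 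Hence $\mathcal{B}$ is $A$-Witt trivial.

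For the direction $(\Rightarrow)$, an $A$-Witt trivialization consists of an $A$-graded $\mathcal{B}$-central fusion 1-category $\mathcal{C}$ together with a grading-preserving braided equivalence $\mathcal{Z}_{(2)}(\mathcal{C}, \mathcal{B})^{rev} \simeq \mathbf{Vect}$. Forgetting the $A$-grading and applying the classical DMNO result gives that $q$ is non-degenerate, and identifies the underlying Lagrangian algebra in $\mathcal{B}$ as $\mathcal{A} = F^*(\mathbf{1}_\mathcal{C})$, where $F\colon \mathcal{B} \to \mathcal{C}$ is the monoidal functor induced by composing the $\mathcal{B}$-central structure with the forgetful functor $\mathcal{Z}(\mathcal{C}) \to \mathcal{C}$, and $F^*$ is its right adjoint. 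Because $F$ is $A$-grading-preserving and $\mathbf{1}_\mathcal{C}$ lives in grade $0$, the algebra $\mathcal{A}$ is concentrated in $\mathcal{B}_0$, so the associated Lagrangian subgroup $L \subseteq G$ satisfies $L \subseteq G_0$.

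The principal technical obstacle will be carefully tracking the $A$-grading through the relative centralizer and the free/forgetful adjunction in the reverse direction, in particular verifying that the grading-preserving hypotheses embedded in the definition of $A$-Witt equivalence propagate through the classical construction of the Lagrangian algebra so that it provably lands in the trivially graded component. Once this bookkeeping is set up, both implications collapse onto the classical DMNO statement together with the simple observation that an algebra supported on $H \subseteq G$ lies in $\mathcal{B}_0$ precisely when $H \subseteq G_0$.
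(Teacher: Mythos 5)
Your proposal is correct and follows exactly the route the paper indicates: the paper itself offers only the one-sentence justification that the claim follows from the grading-forgetting 4-functor together with the classical statement of \cite{DMNO}, and your elaboration of the two directions --- in particular the observation that the Lagrangian algebra $F^*(\mathbf{1}_{\mathcal{C}})$ is forced into the trivially graded component because $F$ preserves gradings and morphisms between distinct graded components vanish --- is precisely the intended argument. The one piece of bookkeeping you gloss over is that the paper's quadratic form on $Inv(\mathcal{B})$ is the syllepsis-twisted one, $q(B)=\beta(B,B)\cdot s(\mathrm{gr}(B),\mathrm{gr}(B))$, whereas forgetting the grading and applying \cite{DMNO} yields non-degeneracy and hyperbolicity of the untwisted form $q_0(B)=\beta(B,B)$; since $q$ and $q_0$ agree on $G_0$ and $\mathsf{Bil}(q)(g,h)=\mathsf{Bil}(q_0)(g,h)$ whenever $g\in G_0$, orthogonal complements of subgroups of $G_0$ coincide for the two forms, and a short radical argument (the radical of either form is contained in $H^{\perp}=H\subseteq G_0$, where the two forms agree) shows that in the presence of a Lagrangian $H\subseteq G_0$ non-degeneracy of $q$ and of $q_0$ are equivalent, so your argument does establish the statement exactly as written.
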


We now explain how to multiply $A$-graded premetric groups together, so as to be able to recover the subgroup structure on $\Witt^{pt}(A, s)\subseteq \Witt(A, s)$.

Let $(f:G\rightarrow A,q_G)$ and $(k:H\rightarrow A,q_H)$ be two $A$-graded premetric groups. Their $s$-twisted product 
\be 
(f:G\rightarrow A,q_G)\boxtimes_A^s(k:H\rightarrow A,q_H)
\ee
is the $A$-graded premetric group obtained by endowing the $A$-graded group 
\be 
G\oplus H\xrightarrow{f+k} A
\ee 
with the quadratic form 
\be 
Q(G[1];h):=q_G(g)\cdot q_H(h)\cdot s(f(g),k(h))\cdot s(k(h),f(g))\,.
\ee 
We also need to consider an $s$-twisted variant of the opposite of an $A$-graded premetric group $(f:G\rightarrow A,q)$. We set $$(f:G\rightarrow A,q)^{s\mathrm{-}op}:= (f:G\rightarrow A,\widetilde{q}),$$ where $\widetilde{q}(g):= q(g)^{-1}\cdot s(f(g),f(g))$.

\begin{Definition}
We say that an $A$-graded premetric group $(f:G\rightarrow A,q)$ is $s$-invertible if 
\be 
(f:G\rightarrow A,q)\boxtimes_A^s(f:G\rightarrow A,q)^{s\mathrm{-}op}
\ee 
is $A$-trivial.
\end{Definition}

\begin{Example}
Provided the syllepsis is the trivial one, a braided $A$-graded fusion 1-category $\mathcal{B}$ is invertible if and only if its underlying braided fusion 1-category is non-degenerate. Namely, in this case, forgetting the grading is compatible with the notion of being invertible, and it is well-known that a braided fusion 1-category is invertible if and only if it is non-degenerate \cite{DN, BJSS,JFR,D9}. Thus it only remains to show that a braided $A$-graded fusion 1-category $\mathcal{B}$ whose underlying braided fusion 1-category is non-degenerate is invertible. This follows from the observation that $\mathcal{B}\boxtimes_A\mathcal{B}^{rev}$ is $A$-Witt trivial as it is equivalent as a braided $A$-graded fusion 1-category to the Drinfeld center of $\mathcal{B}$. Further, if we take $A$ to be trivial, we recover exactly the Witt group $\Witt$ of non-degenerate braided fusion categories introduced in \cite{DMNO}.
\end{Example}

\begin{Definition}
The group $\Witt^{pt}(A,s)$ is the quotient of the monoid of $s$-invertible $A$-graded premetric group (with product given by $\boxtimes_A^s$) under the submonoid of $A$-Witt trivial $A$-graded premetric groups. 
\end{Definition}

In other words, two $s$-invertible $A$-graded premetric groups $(f:G\rightarrow A,q_G)$ and $(k:H\rightarrow A,q_H)$ define the same class in $\Witt^{pt}(A,s)$ if 
\be 
(f:G\rightarrow A,q_G)\boxtimes_A^s(k:H\rightarrow A,q_H)^{s\mathrm{-}op}
\ee 
is $A$-trivial. In particular, the next result follows from lemma \ref{lem:pointedAWitttrivial} together the definitions.

\begin{Lemma}
The group $\Witt^{pt}(A,s)$ is the subgroup of $\Witt(A,s)$ generated by the $s$-invertible pointed braided $A$-graded fusion 1-categories.
\end{Lemma}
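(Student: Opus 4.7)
The plan is to realize $\Witt^{pt}(A,s)$ as a subgroup of $\Witt(A,s)$ via the bijection between pointed braided $A$-graded fusion 1-categories and $A$-graded premetric groups recorded just above the statement. Concretely, I would first check that the assignment sending a pointed braided $A$-graded fusion 1-category $\mathcal{B}$ to its $A$-graded premetric group $(\mathrm{gr}:Inv(\mathcal{B})\rightarrow A, q)$ is compatible with products and opposites: unpacking the braiding formulas for $\boxtimes_A^s$ and $(-)^{s\mathrm{-}rev}$, the cross term involving the syllepsis $s$ appears on the categorical side in exactly the same combinatorial position as it does in $\boxtimes_A^s$ and $(-)^{s\mathrm{-}op}$ for premetric groups. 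In particular, an $A$-graded premetric group is $s$-invertible (in the sense of the definition preceding the statement) if and only if the corresponding pointed braided $A$-graded fusion 1-category is $s$-invertible in $\Witt(A,s)$.

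The next step is to pass to the quotient. For well-definedness of the induced map on $\Witt^{pt}(A,s)$, I would invoke Lemma \ref{lem:pointedAWitttrivial}, which states that a pointed $A$-graded braided fusion 1-category is $A$-Witt trivial precisely when the associated $A$-graded premetric group is $A$-trivial. Combined with the compatibility of products above, this yields a well-defined monoid (hence group) homomorphism $\Phi:\Witt^{pt}(A,s)\rightarrow \Witt(A,s)$. Since the product of two pointed categories is again pointed and the opposite of a pointed category is pointed, the image of $\Phi$ is closed under multiplication and inverses, and it obviously contains every class of an $s$-invertible pointed braided $A$-graded fusion 1-category; hence the image of $\Phi$ coincides with the subgroup of $\Witt(A,s)$ generated by such classes.

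The remaining and genuinely nontrivial step is the injectivity of $\Phi$. Suppose two $s$-invertible $A$-graded premetric groups $(G_1,q_1)$ and $(G_2,q_2)$ have the same class in $\Witt(A,s)$, with associated pointed categories $\mathcal{B}_1$ and $\mathcal{B}_2$. By definition, this means $\mathcal{B}_1\boxtimes_A^s\mathcal{B}_2^{s\mathrm{-}rev}$ is $A$-Witt trivial via some a priori arbitrary central $A$-graded fusion 1-category. The key observation, which bypasses having to control the non-pointed witness, is that $\mathcal{B}_1\boxtimes_A^s\mathcal{B}_2^{s\mathrm{-}rev}$ is itself pointed. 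We may therefore apply Lemma \ref{lem:pointedAWitttrivial} to this product and conclude that the corresponding $A$-graded premetric group, namely $(G_1,q_1)\boxtimes_A^s(G_2,q_2)^{s\mathrm{-}op}$, is $A$-trivial. Hence $[(G_1,q_1)]=[(G_2,q_2)]$ in $\Witt^{pt}(A,s)$, proving injectivity and identifying $\Witt^{pt}(A,s)$ with the stated subgroup.

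The potentially delicate point to verify carefully is the compatibility check between $\boxtimes_A^s$ on premetric groups and $\boxtimes_A^s$ on pointed braided $A$-graded fusion 1-categories; the double appearance of $s$ in the premetric product $Q(g,h)=q_G(g)q_H(h)\,s(f(g),k(h))\,s(k(h),f(g))$ comes precisely from evaluating the quadratic form associated to the categorical product on a diagonal element, and I would make this explicit in the proof to ensure no normalization is missed.
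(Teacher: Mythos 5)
Your proposal is correct and follows essentially the same route as the paper, which dispatches this lemma as an immediate consequence of Lemma \ref{lem:pointedAWitttrivial} together with the definitions; your write-up simply makes explicit the key point that argument hinges on, namely that $\boxtimes_A^s$ and $(-)^{s\mathrm{-}rev}$ preserve pointedness, so that Lemma \ref{lem:pointedAWitttrivial} can be applied to $\mathcal{B}_1\boxtimes_A^s\mathcal{B}_2^{s\mathrm{-}rev}$ to detect triviality without controlling a non-pointed witness. The only phrase worth tightening is ``by definition, this means $\mathcal{B}_1\boxtimes_A^s\mathcal{B}_2^{s\mathrm{-}rev}$ is $A$-Witt trivial'': equality in $\Witt(A,s)$ is defined via $A$-graded Witt equivalence, so identifying it with triviality of the product with the reverse requires the standard (short) argument for invertible classes, exactly as in the ungraded Witt group.
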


\subsubsection{Computing with higher categorical methods}\label{subsubsection:highercatmethods}

Let $\mathfrak{S}$ be a sylleptic fusion 2-category. We record some general facts about the structure of the group $\Witt(\mathfrak{S})$, which are particularly useful in the case when $\mathfrak{S}$ is a sylleptic strongly fusion 2-category. Firstly, we have the following general result \cite[Proposition 2.3.2]{JF2} (yet a more general version was announced \cite{Reu}).

\begin{Theorem}\label{thm:fibersequence}
Let $\mathbf{C}$ be a fusion $n$-category. There is a fiber sequence 
\be 
\mathbf{C}^{\times}\rightarrow \mathscr{A}ut^{\otimes}(\mathbf{C})\rightarrow \mathbf{Bimod}(\mathbf{C})^{\times}\,.
\ee
\end{Theorem}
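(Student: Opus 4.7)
The plan is to construct the fiber sequence by giving both maps explicitly and then identifying the homotopy fiber at every level. The central map $\Psi: \mathscr{A}ut^{\otimes}(\mathbf{C}) \to \mathbf{Bimod}(\mathbf{C})^{\times}$ sends a monoidal autoequivalence $F$ to the invertible $\mathbf{C}$-bimodule ${}_{\mathrm{id}}\mathbf{C}_F$ whose underlying $n$-category is $\mathbf{C}$ itself, with the standard left action and the right action twisted through $F$. Invertibility of this bimodule, with inverse ${}_{\mathrm{id}}\mathbf{C}_{F^{-1}}$, is immediate, and the assignment is functorial in monoidal natural isomorphisms and all higher coherences. The other map $\mathbf{C}^{\times} \to \mathscr{A}ut^{\otimes}(\mathbf{C})$ sends an invertible object $X$ to the inner autoequivalence $\mathrm{conj}_X := X \otimes (-) \otimes X^{-1}$, endowed with the tautological monoidal structure built from the associators of $\mathbf{C}$.

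Next I would check that the composite $\mathbf{C}^{\times} \to \mathbf{Bimod}(\mathbf{C})^{\times}$ is canonically nullhomotopic, and that the induced comparison map from $\mathbf{C}^{\times}$ to the homotopy fiber of $\Psi$ is an equivalence. For the first point, right-multiplication by $X$ defines a bimodule equivalence ${}_{\mathrm{id}}\mathbf{C}_{\mathrm{conj}_X} \xrightarrow{\sim} {}_{\mathrm{id}}\mathbf{C}_{\mathrm{id}}$, since the computation $Y \otimes \mathrm{conj}_X(Z) \otimes X \simeq Y \otimes X \otimes Z$ intertwines the two right actions. For the second, given any bimodule equivalence ${}_{\mathrm{id}}\mathbf{C}_F \simeq \mathbf{C}$, the image $X$ of the monoidal unit is automatically invertible in $\mathbf{C}$ (the inverse equivalence supplies its inverse), and compatibility with both actions forces a coherent monoidal equivalence $F \simeq \mathrm{conj}_X$. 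The same pattern handles higher morphisms: a $k$-morphism of bimodules between two such trivializations is precisely a $k$-morphism in $\mathbf{C}^{\times}$ between the witnessing invertible objects.

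The conceptual framing that renders all of the higher coherences automatic, and which I would adopt for a clean proof, is to realize $\Psi$ as the forgetful map from pointed to unpointed objects inside the Morita $(n+2)$-category built from $\mathbf{C}$. In this picture, $\mathbf{Bimod}(\mathbf{C})^{\times}$ parameterizes invertible $\mathbf{C}$-bimodule $n$-categories as unpointed objects, while $\mathscr{A}ut^{\otimes}(\mathbf{C})$ is the space of self-equivalences of $\mathbf{C}$ \emph{equipped with a trivialization at the monoidal unit}; the homotopy fiber of forgetting this trivialization is exactly the space of pointings, namely $\mathbf{C}^{\times}$. The main obstacle is to make this universal property precise in the higher-categorical setting, so that the fiber sequence is obtained from a principal $\mathbf{C}^{\times}$-bundle formally rather than by hand at each homotopy level. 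Once this is in place the result follows immediately; this is the perspective underlying \cite{JF2}, and the generalization announced in \cite{Reu} abstracts the same argument to any reasonable notion of Morita theory, giving the statement uniformly for all $n$.
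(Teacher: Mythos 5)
The paper does not actually prove this statement: it is quoted directly from \cite[Proposition 2.3.2]{JF2} (with a more general version announced in \cite{Reu}), so there is no in-paper argument to compare against. Your sketch is the standard argument behind that citation --- the map $F\mapsto {}_{\mathrm{id}}\mathbf{C}_F$, the trivialization of quasi-trivial bimodules by right multiplication with an invertible object forcing $F\simeq \mathrm{conj}_X$, and the reformulation of the fiber as the space of pointings in the Morita $(n+2)$-category --- and it is correct in outline; you also rightly flag that the only real content for general $n$ is making the higher coherences automatic via the Morita-theoretic universal property, which is exactly what the cited references supply.
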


Since $\mathbf{Bimod}(\mathbf{C})\simeq \mathbf{Mod}(\mathcal{Z}(\mathbf{C}))$, the above fiber sequence simplifies if $\mathbf{C}$ is Morita invertible as a fusion $n$-category. Namely, this implies that $\mathbf{Mod}(\mathcal{Z}(\mathbf{C}))\simeq \mathbf{(n+1)Vect}$. We record the special case where $\mathbf{C} = \Sigma^2\mathbf{2Vect}^\varsigma((A\oplus \widehat{A})[0])$.
The next corollary follows from the above theorem, and offer valuable insight into the structure of the group $\Witt(A\oplus A,s)$.\footnote{We will drop the hat on the second factor of $A$ when doing computations, since we can choose a noncanonical isomorphism between $\widehat A$ and $A$.}

\begin{Corollary}\label{cor:shortexact}
There is an exact sequence of groups 
\be 
0\rightarrow \Witt\rightarrow \Witt(A\oplus A,s)\rightarrow Aut^{syp}(A\oplus A,\mathsf{Alt}(s))\ltimes H^5((A\oplus A)[3];\mathbb{C}^{\times})\rightarrow \mathbb{Z}/2\,,
\ee 
where $Aut^{syp}(A\oplus A,\mathsf{Alt}(s))$ is the group of automorphisms of $A\oplus A$ preserving the alternating 2-from $\mathsf{Alt}(s)$ introduced in section \ref{subsection:sylleptic}.
\end{Corollary}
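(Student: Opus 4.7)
The strategy is to invoke Theorem \ref{thm:fibersequence} for the fusion 4-category $\mathbf{C} := \Sigma^2\mathbf{2Vect}^s((A\oplus A)[0])$ and read off the desired four-term exact sequence directly from the long exact sequence in homotopy. Since $\mathfrak{S} := \mathbf{2Vect}^s((A\oplus A)[0])$ is sylleptic strongly fusion with non-degenerate alternating form $\mathsf{Alt}(s)$, the braided fusion 3-category $\Sigma\mathfrak{S}$ is non-degenerate, and hence $\mathbf{C}$ is Morita invertible as a fusion 4-category. The discussion preceding the corollary then identifies $\mathbf{Bimod}(\mathbf{C})^{\times}\simeq \mathbf{5Vect}^{\times}$, reducing the rightmost term of the fiber sequence to a well-studied space.

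Next, I would identify the four consecutive homotopy groups appearing in the segment
\[
\pi_1(\mathscr{A}ut^\otimes(\mathbf{C})) \to \pi_1(\mathbf{5Vect}^{\times}) \to \pi_0(\mathbf{C}^{\times}) \to \pi_0(\mathscr{A}ut^\otimes(\mathbf{C})) \to \pi_0(\mathbf{5Vect}^{\times})
\]
of the long exact sequence. By Definition \ref{def:Picardspace} and the preceding subsection, $\pi_0(\mathbf{C}^{\times}) = Pic(\Sigma\mathfrak{S}) = \Witt(A\oplus A, s)$. By \cite{BJSS}, invertible finite semisimple 3-categories are classified up to Witt equivalence by non-degenerate braided fusion 1-categories, so $\pi_1(\mathbf{5Vect}^{\times}) = \pi_0(\mathbf{4Vect}^{\times}) = \Witt$. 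Using connectedness of $\Sigma\mathfrak{S}$ and $\Sigma^2\mathfrak{S}$ (every module $(n{+}1)$-category over a fusion $n$-category admits a nonzero module functor from the regular one), iterated looping produces equivalences
\[
\mathscr{A}ut^\otimes(\Sigma^2\mathfrak{S}) \simeq \mathscr{A}ut^{br}(\Sigma\mathfrak{S}) \simeq \mathscr{A}ut^{syp}(\mathfrak{S}),
\]
and the cohomological classification of syllepses recalled in Section \ref{subsection:sylleptic} (one cohomological degree below that classifying sylleptic structures themselves) yields $\pi_0(\mathscr{A}ut^{syp}(\mathbf{2Vect}^s(B[0]))) = Aut^{syp}(B, \mathsf{Alt}(s)) \ltimes H^5(B[3];\mathbb{C}^{\times})$. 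The final group $\pi_0(\mathbf{5Vect}^{\times}) = \mathbb{Z}/2$ records a framing-type invariant of invertible finite semisimple 4-categories over $\mathbb{C}$.

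Injectivity of $\Witt \hookrightarrow \Witt(A\oplus A, s)$ follows once we verify $\pi_1(\mathscr{A}ut^\otimes(\mathbf{C})) = 0$: by Morita invertibility of $\mathbf{C}$ we have $\Omega\mathcal{Z}(\mathbf{C}) \simeq \mathbf{3Vect}$, so $\pi_1(\mathscr{A}ut^\otimes(\mathbf{C})) = \pi_0(\mathbf{3Vect}^{\times}) = 0$. Feeding all of this into the long exact sequence produces precisely the four-term exact sequence of the corollary. The main obstacle is the rigorous identification $\pi_0(\mathbf{5Vect}^{\times}) = \mathbb{Z}/2$, which requires a delicate analysis of the Picard spectrum of $\mathbf{5Vect}$ beyond what is explicitly treated in the literature. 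A secondary technical issue is ensuring that the chain of equivalences $\mathscr{A}ut^\otimes(\Sigma^2\mathfrak{S}) \simeq \mathscr{A}ut^{syp}(\mathfrak{S})$ is compatible with the connecting maps of the long exact sequence, so that they descend to the natural maps among $\Witt$, $\Witt(A\oplus A, s)$, and the semidirect product appearing in the statement.
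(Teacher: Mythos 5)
Your overall strategy is the same as the paper's: both arguments unpack the fiber sequence of Theorem \ref{thm:fibersequence} for $\mathbf{C}=\Sigma^2\mathbf{2Vect}^s((A\oplus A)[0])$, use Morita invertibility to replace $\mathbf{Bimod}(\mathbf{C})^{\times}$ by $\mathbf{5Vect}^{\times}$ with $\pi_1\cong\Witt$ and $\pi_0\cong\mathbb{Z}/2$, and use $2$-connectedness of $\mathbf{C}$ to identify $\pi_0(\mathscr{A}ut^{\otimes}(\mathbf{C}))\cong\pi_0\bigl(\mathscr{A}ut^{syl}(\mathbf{2Vect}^s((A\oplus A)[0]))\bigr)\cong Aut^{syp}(A\oplus A,\mathsf{Alt}(s))\ltimes H^5((A\oplus A)[3];\mathbb{C}^{\times})$. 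Those identifications, and the resulting three rightmost terms of the sequence, are exactly as in the paper.

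The one genuine gap is your argument for exactness at $\Witt$, i.e.\ injectivity of $\Witt\rightarrow\Witt(A\oplus A,s)$. You assert $\pi_1(\mathscr{A}ut^{\otimes}(\mathbf{C}))=\pi_0(\mathbf{3Vect}^{\times})=0$ on the grounds that $\Omega\mathcal{Z}(\mathbf{C})\simeq\mathbf{3Vect}$, but $\pi_1$ of the monoidal automorphism space is not obtained by looping the Drinfeld center: it is the group of equivalence classes of monoidal (here sylleptic) natural automorphisms of the identity functor. Under your own equivalence $\mathscr{A}ut^{\otimes}(\mathbf{C})\simeq\mathscr{A}ut^{syl}(\mathbf{2Vect}^s((A\oplus A)[0]))$ this group is $H^4((A\oplus A)[3];\mathbb{C}^{\times})\cong\widehat{A\oplus A}$ --- compare the decategorified fact that $\pi_1(\mathscr{A}ut^{\otimes}(\mathbf{Vect}(A[0])))\cong\widehat{A}$ --- so it does not vanish for nontrivial $A$. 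The long exact sequence therefore only says that the kernel of $\Witt\rightarrow\Witt(A\oplus A,s)$ is the image of this finite abelian group, and you would still have to show that image is trivial. The paper instead checks injectivity directly, which is the cleaner route: if a trivially graded non-degenerate braided fusion $1$-category $\mathcal{B}$ is $(A\oplus A)$-Witt trivial, the witnessing graded fusion $1$-category $\mathcal{C}$ has $\mathcal{Z}_{(2)}(\mathcal{C},\mathcal{B})\simeq\mathbf{Vect}$, so by the double centralizer theorem $\mathcal{B}\simeq\mathcal{Z}(\mathcal{C})$, and forgetting the grading exhibits $\mathcal{B}$ as an ordinary Drinfeld center, hence Witt trivial. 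Substituting that one-line check for your $\pi_1$ claim closes the gap; the rest of your argument stands.
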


\begin{proof}
The exact sequence of (non-abelian) groups above is obtained by unpacking theorem \ref{thm:fibersequence}. Firstly, the homotopy groups of $\mathbf{5Vect}^{\times}$ are known \cite{JF, JFR}. In particular, we have $\pi_1(\mathbf{5Vect}^{\times})\cong\mathcal{W}itt$ and $\pi_0(\mathbf{5Vect}^{\times})\cong\mathbb{Z}/2$. Secondly, it is easy to check that the map $\Witt\rightarrow \Witt(A\oplus A,s)$ is injective, and, in fact, also central. Finally, it follows from the fact that $\Sigma^2\mathbf{2Vect}^\varsigma((A\oplus \widehat{A})[0])$ is 2-connected that 
\be 
\mathscr{A}ut^{\otimes}(\Sigma^2\mathbf{2Vect}^\varsigma((A\oplus \widehat{A})[0]))\simeq \mathscr{A}ut^{syl}(\mathbf{2Vect}^\varsigma((A\oplus A)[0]))\,.
\ee 
Further, by inspecting the definition, we find that $\pi_0(\mathscr{A}ut^{syl}(\mathfrak{S}))\cong Aut^{syp}(A\oplus A,\mathsf{Alt}(s))\ltimes H^5((A\oplus A)[3];\mathbb{C}^{\times})$.
\end{proof}

\begin{Remark}
From the examples that we will examine in the next section, it appears that the map $Aut^{syp}(A\oplus A,\mathsf{Alt}(s))\ltimes H^5((A\oplus A)[3];\mathbb{C}^{\times})\rightarrow \mathbb{Z}/2$ is often $0$. In particular, it is not surjective in general.
\end{Remark}

We end this section by recording two conjectures. The first one is motivated by the fact that the Witt spaces commute with equivariantizations \cite{DHJFPPRNY}.

\begin{Conjecture}
For any finite abelian group $A$, there is an isomorphism of groups 
\be 
\Witt(A,\mathsf{triv})\cong \Witt \oplus H^4(A [2];\mathbb{C}^{\times}).
\ee
\end{Conjecture}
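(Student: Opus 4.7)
The plan is to establish a split short exact sequence
\[
0 \longrightarrow H^4(A[2]; \mathbb{C}^\times) \longrightarrow \Witt(A, \mathsf{triv}) \longrightarrow \Witt \longrightarrow 0,
\]
by adapting the fiber-sequence argument of Corollary \ref{cor:shortexact} to the trivial-syllepsis setting. The key structural input is that $\mathbf{2Vect}(A[0])$ with trivial syllepsis canonically extends to a symmetric monoidal fusion 2-category, so $\Sigma^2\mathbf{2Vect}(A[0])$ is Morita invertible as a fusion 4-category. Applying Theorem \ref{thm:fibersequence} to $\mathbf{C} = \Sigma^2 \mathbf{2Vect}(A[0])$ then produces a fiber sequence whose base $\mathbf{Bimod}(\mathbf{C})^\times$ is $\mathbf{5Vect}^\times$. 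Unpacking low-degree homotopy as in Corollary \ref{cor:shortexact} yields the exact sequence
\[
0 \to \Witt \to \Witt(A, \mathsf{triv}) \to \pi_0\bigl(\Aut^{sym}(\mathbf{2Vect}(A[0]))\bigr) \to \mathbb{Z}/2.
\]

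The next step is a Postnikov-theoretic computation of $\pi_0(\Aut^{sym}(\mathbf{2Vect}(A[0])))$, the symmetric monoidal autoequivalences of the symmetric strongly fusion 2-category $\mathbf{2Vect}(A[0])$. The expected answer is $\mathrm{Aut}(A) \ltimes H^4(A[2]; \mathbb{C}^\times)$, where the $H^4$ summand parametrizes coherent deformations of the symmetric structure and is identified with quadratic forms on $A$ via the Eilenberg-MacLane identification $H^4(K(A, 2); \mathbb{C}^\times) \cong \mathrm{Quad}(A, \mathbb{C}^\times)$. The $\mathrm{Aut}(A)$ factor does not enlarge $\Witt(A, \mathsf{triv})$ since the defining Witt equivalence on $A$-graded braided fusion 1-categories already identifies any two categories related by a grading-group automorphism. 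The connecting map to $\mathbb{Z}/2$ is then shown to vanish, in line with the Remark following Corollary \ref{cor:shortexact}, by exhibiting explicit invertible bimodule representatives for every class in the middle term.

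For the splitting, the map $\Witt \to \Witt(A, \mathsf{triv})$ sends a non-degenerate braided fusion 1-category $\mathcal{C}$ to itself equipped with the trivial $A$-grading (all in degree zero); this is manifestly $\mathsf{triv}$-invertible and yields a section of the forgetful map to $\Witt$. The section $H^4(A[2]; \mathbb{C}^\times) \to \Witt(A, \mathsf{triv})$ is built via the Pic-Witt equivalence of Section \ref{sec:HigherMorita}: a quadratic form $q$ on $A$ determines an invertible $\mathbf{2Vect}(A[0])$-module 2-category, hence a class in $Pic(\Sigma \mathbf{2Vect}(A[0])) \cong \Witt(A, \mathsf{triv})$. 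For non-degenerate $q$, this class is represented concretely by the pointed braided fusion 1-category $\mathbf{Vect}_A^q$ with the canonical $A$-grading by its group of invertibles.

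The main obstacle will be the construction of the $H^4$-section when $q$ is degenerate, as the naive candidate $\mathbf{Vect}_A^q$ is not $\mathsf{triv}$-invertible on its own. One must therefore construct the section at the level of module 2-categories rather than directly at the level of $A$-graded braided fusion 1-categories, and verify additivity in $q$ separately. A related subtlety is the precise identification of $\pi_0(\Aut^{sym}(\mathbf{2Vect}(A[0])))$, as symmetric (rather than merely sylleptic) monoidal autoequivalences involve higher coherence data that must be tracked carefully; and, as the Remark following Corollary \ref{cor:shortexact} cautions, the vanishing of the connecting map to $\mathbb{Z}/2$ is not automatic in general and must be verified explicitly in this case rather than assumed.
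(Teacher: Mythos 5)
First, a point of order: the paper does not prove this statement. It is recorded explicitly as a Conjecture, motivated only by the expectation that Witt spaces commute with equivariantization, so there is no proof in the paper to compare yours against. Evaluating your proposal on its own terms, it has a fatal flaw at its foundation. You write that because $\mathbf{2Vect}(A[0])$ with trivial syllepsis is symmetric, $\Sigma^2\mathbf{2Vect}(A[0])$ is Morita invertible as a fusion 4-category; this implication is exactly backwards. The simplification $\mathbf{Bimod}(\mathbf{C})^{\times}\simeq \mathbf{5Vect}^{\times}$ exploited in Corollary \ref{cor:shortexact} requires $\mathcal{Z}(\mathbf{C})$ to be trivial, and for $\mathbf{C}=\Sigma^2\mathbf{2Vect}^{\varsigma}((A\oplus\widehat{A})[0])$ this holds precisely because $\varsigma$ is a \emph{non-degenerate} alternating form (that $\mathbf{C}$ is the Drinfeld center of $\mathbf{3Vect}(A[1])$). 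The trivial syllepsis is the opposite, maximally degenerate, extreme: every object of $\mathbf{2Vect}(A[0])$ is transparent, so $\mathcal{Z}(\Sigma^2\mathbf{2Vect}(A[0]))$ is far from trivial, $\mathbf{Bimod}(\mathbf{C})^{\times}$ is not $\mathbf{5Vect}^{\times}$, and the exact sequence you extract (with $\Witt$ as $\pi_1$ of the base and $\mathbb{Z}/2$ as $\pi_0$) does not follow. Since identifying the cokernel of $\Witt\rightarrow\Witt(A,\mathsf{triv})$ is the entire content of the conjecture, the argument collapses at this step.

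Two secondary problems would remain even if a usable fiber sequence were available. First, the middle term would be $\pi_0$ of the \emph{sylleptic} autoequivalences of $\mathbf{2Vect}(A[0])$ (the fiber sequence of Theorem \ref{thm:fibersequence} sees $\Sigma^2\mathbf{2Vect}(A[0])$ only as a monoidal 4-category, and 2-connectedness reduces its autoequivalences to sylleptic ones of the loop 2-category); by the paper's own computation the deformation part of that group is $H^5(A[3];\mathbb{C}^{\times})$, not $H^4(A[2];\mathbb{C}^{\times})\cong \mathrm{Quad}(A)$, so the desired summand does not appear where you place it. Second, your claim that the $\mathrm{Aut}(A)$ factor is harmless because Witt equivalence identifies categories related by a grading-group automorphism is contradicted by the paper's computation for $\mathbb{Z}/2\oplus\mathbb{Z}/2$: the classes $\mathfrak{a}$ and $\mathfrak{b}$ are related by the swap in $\mathrm{Aut}(A)$ yet are distinct, non-commuting elements of $\Witt^{pt}(\mathbb{Z}/2\oplus\mathbb{Z}/2,s)/\Witt^{pt}$. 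What does survive is the easy half of your plan: the trivially-graded inclusion $\Witt\rightarrow\Witt(A,\mathsf{triv})$ is split by forgetting the grading (using the paper's observation that for trivial syllepsis, $\mathsf{triv}$-invertibility is equivalent to non-degeneracy of the underlying braided fusion 1-category), so $\Witt$ is a direct summand; but identifying its complement with $H^4(A[2];\mathbb{C}^{\times})$ — including the construction of classes from degenerate quadratic forms, which you yourself flag as unresolved — is exactly what your proposal does not deliver.
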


The subgroup $\Witt^{pt}(A,s)$ is much more amenable to explicit computations than $\Witt(A,s)$. Motivated by the last conjecture above, we make the following claim.

\begin{Conjecture}
For any abelian group $A$ and syllepsis $s$, the canonical map 
\be \Witt^{pt}(A,s)/\Witt^{pt}\rightarrow \Witt(A,s)/\Witt
\ee 
is an isomorphism.
\end{Conjecture}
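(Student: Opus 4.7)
The plan is to identify both quotients $\Witt^{pt}(A,s)/\Witt^{pt}$ and $\Witt(A,s)/\Witt$ with subgroups of a common sylleptic automorphism group, and to show these two subgroups coincide. The intuition is that quotienting by the ungraded Witt group kills the ``Ising-type'' non-pointed information, so what survives records only pointed $A$-graded data.

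First, I would extend Corollary \ref{cor:shortexact} from $A\oplus A$ to an arbitrary finite abelian group $A$ by applying Theorem \ref{thm:fibersequence} to the fusion 4-category $\Sigma^2\mathbf{2Vect}^s(A[0])$. Since this fusion 4-category is twice connected and Morita invertible, the argument of the corollary carries over verbatim to yield an exact sequence
\begin{equation*}
0 \to \Witt \to \Witt(A,s) \xrightarrow{f} G \to \mathbb{Z}/2, \qquad G := Aut^{syp}(A, \mathsf{Alt}(s)) \ltimes H^5(A[3]; \mathbb{C}^{\times}),
\end{equation*}
so that $\Witt(A,s)/\Witt$ injects into $G$ via $\bar f$. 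Restricting $f$ to the pointed subgroup gives a map $f^{pt}: \Witt^{pt}(A,s) \to G$ whose kernel I claim is exactly $\Witt^{pt}$. The inclusion $\Witt^{pt}\subseteq \ker f^{pt}$ is clear, while the reverse inclusion requires analyzing the Lagrangian algebra witnessing the $A$-Witt equivalence: for a pointed $s$-invertible $A$-graded category whose $A$-Witt class lies in $\Witt$, Lemma \ref{lem:0metricdecomposition} forces the underlying premetric group to split off a pointed Lagrangian part supported in non-trivial grades, leaving a pointed ungraded representative. This produces $\bar f^{pt}: \Witt^{pt}(A,s)/\Witt^{pt} \hookrightarrow G$ compatible with $\bar f$, and automatically implies injectivity of the conjectured map.

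Second, for surjectivity, I would construct an explicit pointed realization for each element in the image of $f$. For an automorphism $\varphi \in Aut^{syp}(A, \mathsf{Alt}(s))$, take the pointed $A$-graded premetric group supported on the graph of $\varphi$ inside $A\oplus A$ equipped with the hyperbolic quadratic form twisted by $s$, in the spirit of Lemma \ref{lem:quadform}. For a class $\omega \in H^5(A[3]; \mathbb{C}^{\times})$, realize $\omega$ by modifying the quadratic form on a trivially $A$-graded pointed premetric group via the classical identification of abelian cohomology with pointed braided data. After verifying $s$-invertibility in each case, one concludes $\mathrm{im}(f^{pt}) = \mathrm{im}(f)$, completing the surjectivity argument.

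The main obstacle will be the $H^5(A[3]; \mathbb{C}^{\times})$ realization step. Unlike $H^4(A[2]; \mathbb{C}^{\times})$, which classifies quadratic forms and is visibly pointed (cf.\ Lemma \ref{lem:quadform}), the group $H^5(A[3]; \mathbb{C}^{\times})$ parametrizes higher sylleptic coherences and has no immediate dictionary to pointed premetric groups. Resolving this will likely require either a bar-construction cocycle calculation on the Eilenberg-MacLane space $K(A,3)$, or a Postnikov-tower comparison of $\mathrm{B}\Pic(\mathbf{2Vect}^s(A[0]))$ with the analogous space built from pointed data, identifying their higher $k$-invariants. A secondary subtlety is that the exact sequence terminates in $\mathbb{Z}/2$, and one must check that this boundary behaves compatibly with the pointed subspace so that no element of $\mathrm{im}(f)$ is missed.
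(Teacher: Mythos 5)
You should first be aware that the paper offers no proof of this statement: it appears in a \emph{Conjecture} environment, introduced only as a claim ``motivated by the last conjecture above,'' so there is no argument of the authors to compare yours against. Judged on its own terms, your proposal is a reasonable reduction strategy but it contains gaps that are themselves open problems, so it does not constitute a proof. The first gap is at the very start: the extension of Corollary \ref{cor:shortexact} to arbitrary $(A,s)$ does not ``carry over verbatim.'' That corollary is proved using that $\Sigma^2\mathbf{2Vect}^\varsigma((A\oplus\widehat{A})[0])$ is Morita invertible (it is a Drinfeld center), so that the third term of the fiber sequence of Theorem \ref{thm:fibersequence} is $\mathbf{5Vect}^{\times}$, whose homotopy groups are known. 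The conjecture, however, is stated for every syllepsis $s$, including $s=\mathsf{triv}$ and any $s$ with degenerate $\mathsf{Alt}(s)$; in those cases $\Sigma^2\mathbf{2Vect}^s(A[0])$ is not Morita invertible, $\mathbf{Bimod}(\mathbf{C})^{\times}$ is not $\mathbf{5Vect}^{\times}$, and your exact sequence $0\to\Witt\to\Witt(A,s)\to G\to\mathbb{Z}/2$ is simply not available. Note that the companion conjecture $\Witt(A,\mathsf{triv})\cong\Witt\oplus H^4(A[2];\mathbb{C}^{\times})$ is exactly the case your sequence cannot reach.

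The second gap is your identification $\ker f^{pt}=\Witt^{pt}$, which is equivalent to the saturation statement $\Witt^{pt}(A,s)\cap\Witt=\Witt^{pt}$ inside $\Witt(A,s)$. Your appeal to Lemma \ref{lem:0metricdecomposition} tacitly assumes that the central fusion 1-category witnessing the $A$-Witt triviality is itself pointed, so that everything can be tracked at the level of premetric groups; but if a pointed graded category is $A$-Witt equivalent to a non-pointed ungraded one, the witness lives outside the pointed world and the decomposition lemma does not apply. Even in the ungraded case the analogous saturation of $\Witt^{pt}$ in $\Witt$ rests on the theory of completely anisotropic representatives from \cite{DMNO}, and no graded, $s$-twisted analogue of that structure theory is developed in the paper or in your proposal. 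Finally, for surjectivity you must (i) determine $\mathrm{im}(f)$, which requires controlling the boundary map to $\mathbb{Z}/2$ that the paper's own remark says is not understood in general, and (ii) realize every class of $H^5(A[3];\mathbb{C}^{\times})$ by a pointed $s$-invertible graded category, which you explicitly flag as unresolved. As it stands, your argument reduces the conjecture to several other open statements rather than proving it; the graph-of-$\varphi$ construction for the $Aut^{syp}$ part is the one ingredient that looks genuinely tractable and worth writing out carefully.
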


\subsection{Computing Twisted Graded Witt}
\label{subsubsection:loworders}


 We will take $A = \Z/2 \oplus \Z/2$ and write $a=(1,0)$ and $b=(0,1)$. We consider the non-trivial syllepsis $s$ given by 
 \be 
 s(a,b)=-1, s(a,a)=s(b,b)=s(b,a)=+1\,.
 \ee 
 We now study $\Witt^{pt}(\Z/2 \oplus \Z/2,s)/\Witt^{pt}$.
Thanks to our general considerations in section \ref{subsubsection:highercatmethods}, we have 
\be 
\Witt^{pt}(\mathbb{Z}/2\oplus \mathbb{Z}/2,s)/\Witt^{pt}\cong Ker \big(\mathsf{SL}(2,2)\ltimes (\mathbb{Z}/2\oplus \mathbb{Z}/2)\rightarrow \mathbb{Z}/2\big)\,,
\ee 
using the fact that $Aut^{syp}(\mathbb{Z}/2\oplus \mathbb{Z}/2,s)\cong \mathsf{SL}(2,2)\cong S_3$ and $H^5((\Z/2 \oplus \Z/2)[3];\mathds{k}^\times)=\Z/2 \oplus \Z/2$. It follows from the inclusion of symplectic groups $\mathbb{Z}/2\oplus 0\subset \mathbb{Z}/2\oplus \mathbb{Z}/2$ that the graded semion give an element of order 4 in $\Witt^{pt}(\mathbb{Z}/2\oplus \mathbb{Z}/2,s)/\Witt^{pt}$ and therefore the above map to $\Z/2$ must be zero and 
\be 
\Witt^{pt}(\mathbb{Z}/2\oplus \mathbb{Z}/2,s)/\Witt^{pt}\cong S_4\,.
\ee In particular, $\Witt^{pt}(\mathbb{Z}/2\oplus \mathbb{Z}/2,s)/\Witt^{pt}$ has elements of order $3$ and $4$.
 
In order to familiarize with some computations regarding the graded twisted Witt group, we will show which graded categories have rank 4, 3, and 2 in $\Witt^{pt}(\Z/2 \oplus \Z/2,s)/\Witt^{pt}$. 
For the purpose of understanding duality defects, we will only need to understanding the order two categories. A subset of the order two categories will be the square of order 4 categories, so we first present the computation to show what categories are order 4.  \\

\noindent\underline{\textbf{Elements of order 4:}}
we have the following graded groups: 
\be \ba 
\mathfrak{a}:=& (\Z/2 \xrightarrow{a} A;\, q(1)=\pm i)\cr  \mathfrak{b}:=&(\Z/2 \xrightarrow{b} A;\, q(1)=\pm i)\cr 
 \mathfrak{c}:=&(\Z/2 \xrightarrow{a+b} A;\, q(1)=\pm 1) \,.
 \ea
 \ee
If we square $\mathfrak{a}$, we get 
\be 
\mathfrak{a}^2= (\Z/2\oplus \Z/2 \xrightarrow{(a,a)} A;\, q(1,0)=\pm i,\, q(0,1)=\pm i,\, q(1,1)=-1)\,,
\ee 
where $q(1,1) = q(1,0)q(0,1)s(a,a)$, and the product is taken with respect to $\oplus^s_{A}$. Taking $\mathfrak{a}$ to the fourth power we get a grading 
$\Z/2\oplus \Z/2 \oplus \Z/2\oplus \Z/2 \xrightarrow{(a,a,a,a)} A$, and we now discuss the graded components. When this is understood we can show how this is trivial in the $s$-invertible $A$-graded Witt group, and thus $\mathfrak{a}$ has order 4. The 0-graded sector is generated by $\{(1,1,0,0), (0,1,1,0), (0,0,1,1)\}$, and we have a non-degenerate subcategory spanned by $(1,1,0,0)$ and $(0,1,1,0)$. We call the subcategory generated by these two elements $\mathfrak{z}=(\Z/2 \oplus \Z/2 \rightarrow A, q(1,0)=q(0,1)=q(1,1)=-1)$. Since we are working modulo $\Witt$, the non-degenerate part of the 0-graded sector of the category can be trivialized.
We then look at the orthogonal complement of $\mathfrak{z}$ and find that it is spanned by $(1,1,0,0)$ and $(0,0,1,1)$ 
Therefore, $(\mathfrak{a})^4$ is trivial, and the other graded categories can be checked in a similar way to be order 4. \\

\noindent\underline{\textbf{Elements of order 3:}}
In the group $S_4$, the product of any two non-commuting elements of order 4 gives an element of order 3. In fact, by considering their inverses and the products in the reverse order, we can obtain all elements of order 3 that way. The products $\mathfrak{a}^{\pm1}\mathfrak{b}^{\pm 1}$ and $\mathfrak{b}^{\pm1}\mathfrak{a}^{\pm 1}$ give: 
\be 
(\Z/2\oplus \Z/2 \xrightarrow{=} A;\, q(1,0)=\pm i, q(0,1)=\pm i, q(1,1)=\pm 1) \,.
\ee
This gives the 8 elements of order 3. One can also check that all of the other possible choices of two non-commuting elements of order 4 yield the same list of elements of order 3.

It is instructive to do an example working modulo $\Witt^{pt}$, for seeing how one of the above elements has order 3. Consider $(\mathfrak{a}^{+1} \mathfrak{b}^{+1})^3$, we first compute $\mathfrak{a} \mathfrak{b}$ which is 
\be
(\Z/2 \oplus \Z/2 \xrightarrow{(a,b)} A, q(1,0)=i, q(0,1)=i, q(1,1)=1)\,.
\ee 
Here, we used that $q(1,1) =q(1,0)q(0,1) s(a,b)$, and $s(a,b)=-1$. Taking the square we have the group $\Z/2\oplus \Z/2 \oplus \Z/2 \oplus \Z/2 \xrightarrow{(a,b,a,b)} A$, and we now discuss the graded components.  The 0-graded sector is generated by the maps $(a,b,a,b) = (1,0,1,0)$ and $(a,b,a,b) =(0,1,0,1)$. The quadratic forms are given by $q(1,0,1,0)= -1$, $q(0,1,0,1)=-1$ and $q(1,1,1,1) = (+1)(+1)s(ab,ab) = -1$. We see that the 0-graded sector is indeed non-degenerate, so we can split this off and move on to the $a$-graded and $b$-graded sectors. In the $a$-graded sector, we have 4 possible maps given by $\{(1,0,0,0), (0,0,1,0), (1,1,0,1), (0,1,1,1)\}$. One can check that the element $(0,1,1,1)$ braids trivially with $(1,0,1,0)$ and $(0,1,0,1)$, which are 0-graded. Similarly in the $b$-graded sector, we have 4 possible maps given by $\{ (0,1,0,0), (0,0,0,1), (1,1,1,0), (1,0,1,1)\}$, and the element $(1,1,1,0)$ braids trivially with $(1,0,1,0)$ and $(0,1,0,1)$. We therefore see that $(\mathfrak{a}^{+1} \mathfrak{b}^{+1})^2$ is given by 
\be 
(\Z/2 \oplus \Z/2 \xrightarrow{(a,b)}A, q'(0,1)=-i, q'(1,0)=-i, q'(1,1)=-1)
\,,
\ee
where $(0,1,1,1)$ and $(1,1,1,0)$ generates the two $\Z/2$s. We multiply again by $(\mathfrak{a}^{+1} \mathfrak{b}^{+1})$ and analyze the 0-graded sector. We denote the new quadratic form for the product by $\tilde{q}$ which takes values
\begin{align}
    \tilde{q}(1,0,1,0) = q(1,0) q'(1,0) = 1\,,\\ \notag 
    \tilde{q}(0,1,0,1) = q(0,1) q'(0,1) = 1\,, \\ \notag 
    \tilde{q}(1,1,1,1) = q(1,1) q'(1,1) s(a,b) = 1\,,
\end{align}
and therefore, is symmetric. We can check if this is Lagrangian by computing if $(1,0,0,0)$ and $(0,1,0,0)$ braid nontrivially with $(1,0,1,0)$ and $(0,1,0,1)$. A quick computate sees that indeed $(1,0,0,0)$ and $(0,1,0,0)$ braid nontrivially, and therefore $(\mathfrak{a}^{+1} \mathfrak{b}^{+1})^3$ is trivial. \\

\noindent\underline{\textbf{Elements of order 2:}}
There are three elements of order 2 which are squares of the categories that have order 4 (the  graded semion). The three categories are:
\be 
\ba 
(\Z/2\oplus \Z/2 &\xrightarrow{(a,a)} A;\, q(1,0)= i, q(0,1)= i, q(1,1)=-1) \cr 
(\Z/2\oplus \Z/2 &\xrightarrow{(b,b)} A;\, q(1,0)= i, q(0,1)= i, q(1,1)=-1)\cr 
(\Z/2\oplus \Z/2 &\xrightarrow{(a+b,a+b)} A;\, q(1,0)= 1, q(0,1)= 1, q(1,1)=-1)\,.
\ea
\ee
There is, in addition, 6 other elements of order 2. These can also be obtained by multiplying together elements of order 4 appropriately. A full list is given by 
\be 
\{\mathfrak{a}^{2}\mathfrak{b}^{\pm 1},\, \mathfrak{a}\mathfrak{b}^{\pm 1}\mathfrak{a},\, \mathfrak{b}\mathfrak{a}^{\pm 1}\mathfrak{b}\} \,.
\ee 
For instance, we can explicitly write:
\be 
\mathfrak{a}^{2}\mathfrak{b}^{+1} = (\Z/2\oplus \Z/2 \oplus \Z/2 \xrightarrow{(a,a,b)} A; Q) \,,
\ee
with quadratic form $Q$ given by 
\be \ba 
Q(1,0,0)&=Q(0,1,0)=Q(0,0,1)=+i, Q(1,1,0)=-1 \cr 
Q(1,0,1)&=Q(0,1,1)=1, Q(1,1,1)=-i \,.
\ea
\ee

We now give the order of these elements when we lift into $\Witt(\Z/2\oplus \Z/2, s)$.

\noindent\underline{\textbf{Lifting elements of order 4:}}
 The 6 elements we found before also have order 4 in $\Witt(\mathbb{Z}/2\oplus \mathbb{Z}/2,s)$. One can check that a metric group is picked up when computing the fourth power, but this metric group is Witt trivial so it causes no problem when we lift. \\

\noindent\underline{\textbf{Lifting elements of order 3:}}
When computing the order of $\mathfrak{a}^{\pm1}\mathfrak{b}^{\pm 1}$ and $\mathfrak{b}^{\pm1}\mathfrak{a}^{\pm 1}$
in $\Witt^{pt}(\Z/2 \oplus \Z/2, s)/ \Witt^{pt}$. We killed non-degenerate categories of the form 
\be 
\mathcal{C} :=(\Z/2\oplus \Z/2 \xrightarrow{0} A;\, q(1,0)=q(0,1)=q(1,1)=-1)\,,
\ee
which are order 2 in $\Witt^{pt}$. The `0' in the arrow denotes that fact that we are killing the grading in the category.
By the short exact sequence in Corollary \ref{cor:shortexact}, $(\mathfrak{a}^{\pm1}\mathfrak{b}^{\pm 1})\cC$ and $\mathfrak{b}^{\pm1}\mathfrak{a}^{\pm 1} \cC$ have order 6 in $\Witt^{pt}(\Z/2 \oplus \Z/2, s)$. Therefore, to lift the elements $\mathfrak{a}^{\pm1}\mathfrak{b}^{\pm 1}$ and $\mathfrak{b}^{\pm1}\mathfrak{a}^{\pm 1}$ to $\Witt^{pt}(\Z/2 \oplus \Z/2, s)$, we have to consider 
\be 
\{\mathfrak{a}^{\pm1}\mathfrak{b}^{\pm 1} \sqrt{\cC},\, \mathfrak{b}^{\pm1}\mathfrak{a}^{\pm 1} \sqrt{\cC}\}\,.
\ee
Since the category $\mathcal{C}$ corresponds to the category $\mathrm{SO}(8)_1$, which has order $\Z/8$ in $Ker(\mathcal{W}itt\rightarrow s\mathcal{W}itt)$, we can find its square root, which we denoted $\sqrt{\cC}$. \\

\noindent\underline{\textbf{Lifting elements of order 2:}}
By the analysis that can be done for lifting the elements of order 4, the three elements of order 2 which are squares of the graded semion lift without any contribution.

We now consider the lifting of the order 2 elements that are not the squares of the graded semions. The other elements in $\{\mathfrak{a}^{2}\mathfrak{b}^{\pm 1},\, \mathfrak{a}\mathfrak{b}^{\pm 1}\mathfrak{a},\, \mathfrak{b}\mathfrak{a}^{\pm 1}\mathfrak{b}\}$
acquire additional contribution from a non-degenerate category
\be 
\mathcal{C} :=(\Z/2\oplus \Z/2 \xrightarrow{0} A;\, q(1,0)=q(0,1)=q(1,1)=-1)
\ee
with order 2 in $\Witt^{pt}$
along with a Lagrangian 
\be 
\mathcal{L} :=(\Z/2\oplus \Z/2 \xrightarrow{0} A;\, q(1,0)=q(0,1)=q(1,1)=1).
\ee
As an example, we consider $(\mathfrak{a}^2\mathfrak{b}^{+1})^2$, for which we have the group 
\be \Z/2 \oplus \Z/2 \oplus \Z/2 \oplus \Z/2 \oplus \Z/2 \oplus \Z/2 \xrightarrow{(a,a,b,a,a,b)}A\,.
\ee
The 0-graded part generated by $(0,1,0,0,1,0)$ and $(0,0,1,0,0,1)$ is nondegenerate as one can check that $q(0,1,0,0,1,0)=-1$ and $q(0,0,1,0,0,1)=-1$ by using the definition of $\mathfrak{a}^2\mathfrak{b}^{+1}$. These two elements therefore generate each of the $\Z/2$s in $\mathcal{C}$. As we have already seen, we can find a square root of $\cC$, so the categories 
\be 
\mathcal{S}:=\{ \mathfrak{a}^{2}\mathfrak{b}^{\pm 1}\sqrt{\cC},\, \mathfrak{a}\mathfrak{b}^{\pm 1}\mathfrak{a}\sqrt{\cC},\, \mathfrak{b}\mathfrak{a}^{\pm 1}\mathfrak{b}\sqrt{\cC}\}\ee
have order two in $\Witt^{pt}(\Z/2 \oplus \Z/2, s)$. There are infinitely many choices of square roots that one can choose in $\Witt^{pt}$, and the proliferation of choices implies a freedom to dress the 3d defect with Morita invertible braided fusion categories.

\subsection{Constructing crossed braided 3-categories}
\label{subsub:examplesZ2}


Having very explicitly computed elements of order 2 in $\Witt^{pt}(\mathbb{Z}/2\oplus \mathbb{Z}/2,s)$, we attempt to understand the extension that will lead to $\mathbf{3}\TY[\Z/2]$ from the point of view of the (4+1)d, i.e. \!\!the bottom line in Figure \ref{fig:gauging}. 
We give the corresponding $\Z/2$-crossed braided extension. To completely gauge the zero-form global symmetry 
and obtain the SymTFT for $\mathbf{3}\TY[\Z/2]$
would also involve performing equivariantization for the corresponding crossed braided 3-categories, a step which we do not perform.

Let $\mathcal{B}$ be one of the $\Z/2 \times \Z/2$-graded braided fusion 1-category in $\Witt^{pt}(\mathbb{Z}/2\oplus \mathbb{Z}/2,s)$ of order 2 in the set $\mathcal{S}$, as described in the end of section \ref{subsubsection:loworders}. 
The other elements of order 2 arising from squares of the graded semion form a normal subgroup, which is the subgroup that lives in the image of $\Witt$.

The underlying 3-category corresponding to the $\Z/2$-graded extension of $ \Sigma \bVect^\varsigma((\Z/2\oplus \Z/2 )[0])$ has the form:
\begin{equation*}
    \mathscr{B} = \Sigma \bVect^\varsigma((\Z/2\oplus \Z/2 )[0]) \boxplus \Sigma \mathbf{Mod}_{\mathbf{2Vect}^\varsigma((\Z/2 \oplus \Z/2)[0]
    )}(\mathcal{B})\,.
\end{equation*}
This description does parse as a 3-category by noting that $\mathbf{Mod}_{\mathbf{2Vect}^\varsigma((\Z/2 \oplus \Z/2)[0])}(\mathcal{B})$ is a fusion 2-category \cite{DY22}. The data of $\alpha$ and $\tau$ twist the \textit{higher} coherence of the $\mathbb{Z}/2$-crossed braided structure. These twists are harder to see explicitly. We now give the fusion of two objects $\mathbf{D}^{(1)}_3$ and $\widetilde{\mathbf{D}}^{(1)}_3$ in the nontrivially graded component. By virtue of the fact that $\mathcal{B}$ has order two in $\Witt^{pt}(\mathbb{Z}/2\oplus \mathbb{Z}/2,s)$, there exists a $\Z/2 \oplus \Z/2$-graded fusion 1-category $\mathbf{D}^{(0)}_3$ such that $\mathcal{Z}(\mathbf{D}^{(0)}_3)= \mathcal{B} \underset{\Z/2 \oplus \Z/2} \boxtimes^{\varsigma} \mathcal{B}$ as $\Z/2 \oplus \Z/2$-graded braided fusion 1-categories. Then we have 
\be 
\mathbf{D}^{(1)}_3\times \widetilde{\mathbf{D}}^{(1)}_3 = (\mathbf{D}^{(1)}_3\underset{\Z/2 \oplus \Z/2} \boxtimes^{\varsigma}  \widetilde{\mathbf{D}}^{(1)}_3) \underset{(\mathcal{B}\underset{\Z/2 \oplus \Z/2} \boxtimes^{\varsigma}  \mathcal{B})} \boxtimes \mathbf{D}^{(0)}_3 \,,
\ee
where we think of $\mathbf{D}^{(0)}_3$ as playing the role of witnessing $\mathbf{D}^{(1)}_3 \times \widetilde{\mathbf{D}}^{(1)}_3$ being trivial in $\Witt^{pt}(\Z/2\oplus \Z/2,s)$. In particular, if we take the canonical objects in $\mathbf{Mod}_{\mathbf{2Vect}^\varsigma(\Z/2 \oplus \Z/2[0])}(\mathcal{B})$ given by $\mathcal{B}$, then 
\be 
\mathcal{B} \times \mathcal{B} = \mathbf{D}^{(0)}_3\,.
\ee
Picking any of the other elements of order 2 in $\Witt^{pt}(\mathbb{Z}/2\oplus \mathbb{Z}/2,s)$ lead to the same fusion rules for objects, and the categories can be related by using the automorphisms of the zero graded component.

\begin{Remark}
Recall that to construct the center of a 3TY, there is an equivariantization step as in Figure \ref{fig:gauging}. 
The underlying 3-category of $\mathscr{B}^{\Z/2}$, the $\Z/2$-equivariantization of $\mathscr{B}$, can be described as
\begin{equation*}
    \mathscr{B}^{\Z/2} = 
    \left(\Sigma\mathbf{2Vect}^\varsigma(\Z/2 \oplus \Z/2[0]
    )\right)^{\Z/2}\boxplus \left(\Sigma \mathbf{Mod}_{\mathbf{2Vect}^\varsigma((\Z/2 \oplus \Z/2)[0]
    )}(\mathcal{B})\right)^{\Z/2}\,.
\end{equation*}
The connected components of the $\Z/2$-equivariantization of $\mathscr{B}$, i.e.\ $\Omega \mathscr{E}^{\Z/2} $, is given by the sylleptic fusion 2-category that is the $\Z/2$-equivariantization of $\mathbf{2Vect}^\varsigma((\Z/2 \oplus \Z/2)[0])$. 
The $\Z/2$-equivariantization of $\Sigma \mathbf{Mod}_{\mathbf{2Vect}^\varsigma((\Z/2 \oplus \Z/2)[0]
)}(\mathcal{B})$ is equivalent to $\Sigma \mathfrak{C}$ where $\mathfrak{C}$ is a $\Z/2$-extension of the fusion 2-category $\mathbf{Mod}_{\mathbf{2Vect}^s((\Z/2 \oplus \Z/2)[0]
)}(\mathcal{B})$, but we were unable to identify this extension explicitly.

\end{Remark}

The previous discussion generalizes for other groups $G=\Z/K$ for $\Mod( \bVect^s(A))$ where $A$ is an abelian group. We consider a map $G\rightarrow \mathcal{W}itt(A,s)$, which corresponds to an element $\mathcal{B}\in \Witt(A,s)$ of order $K$. 



\section{Outlook and Challenges}\label{section:outlook}

In this work, we were concerned with the following mathematical objects having to do with extension theory for 3-categories: $\BrPic(\mathbf{3Vect}(A[1]))$, $\Pic(\cZ(\mathbf{3Vect}(A[1])))$, and $\Witt^{}(A \oplus A,\varsigma)$. While they are all related, it is hard to pass between each of the objects in a manifestly transparent way. A particular question that is natural to ask is the following: Given a twist defect in the bulk that decorates the ending of the 0-form symmetry
for the SymTFT, can one identify the precise duality defect on the boundary? This would mean understanding how a class $\widehat{\mathcal{B}}\in \mathcal{W}itt^{}(A\oplus {A},\varsigma)/\mathcal{W}itt^{}$, which has order two, maps to an element $M \in BrPic(\mathbf{3Vect}(A[1]))/\Witt^{}$. One can consider mapping to $\widehat{\mathcal{B}}\in \mathcal{W}itt^{}(A\oplus {A},\varsigma)/\mathcal{W}itt^{}$ by
passing through $Pic(\cZ(\cVect(A[1])))/\Witt^{}$ given the fact that there are equivalences
\be 
\frac{BrPic(\mathbf{3Vect}(A[1]))}{\Witt^{}} \longrightarrow \frac{Pic(\cZ(\cVect(A[1])))}{\Witt^{}} \longleftarrow \frac{\mathcal{W}itt^{}(A\oplus {A},s)}{\mathcal{W}itt^{}} \,.
\ee
A bimodule $\mathscr{M}$ in $BrPic(\mathbf{3Vect}(A[1]))/\Witt^{}$ is mapped to (the classes of)
\be 
Hom_{\cVect(A[1])\text{-}\cVect(A[1])}(\cVect(A[1]), \mathscr{M})\cong \cZ_{\cVect(A[1])}(\mathscr{M}) \,.
\ee
On the other hand, the map from $\Witt^{}(A\oplus A,s)/\Witt^{}$ to $Pic(\cZ(\cVect(A[1])))/\Witt^{}$ sends
\be 
\widehat{\cB} \mapsto \Sigma \Mod_{\cZ(\cVect(A[1]))}(\widehat{\cB}) \,.
\ee
In applications it would be useful to be able to describe the composite.

We also want to understand how an element in $Pic(\cZ(\mathbf{3Vect}(A[1])))$ acts on the center, i.e.\ which element of $Aut^{br}(\cZ(\mathbf{3Vect}(A[1])))$ it yields. This information is important in order to understand where the obstruction classes in section \ref{subsection:crossbraideddefs} lie, and goes beyond just computing the group structure of $Pic(\cZ(\mathbf{3Vect}(A[1])))$. Then, even in the case when the obstructions to a $G$-crossed braided extension of $\cZ(\mathbf{3Vect}(A[1]))$ vanish, we still must conduct the equivariantization step in Figure \ref{fig:gauging} for the gauging in order to recover an actual center, as in \cite{GNN} for 1-categories. Even for $\Z/2$-crossed braided extensions, the result of the equivariantization is ambiguous. For instance, we were not able to completely determine the $\Z/2$-equivariantization of $\Sigma\mathbf{Mod}_{\mathbf{2Vect}^\varsigma(\Z/2 \oplus \Z/2[0] )}(\mathcal{B})$ in the last section section \ref{subsub:examplesZ2}. A higher form of (de-)equivariantization is being developed by \cite{DHJFPPRNY} for the purpose of classifying fusion 2-categories and it remains to be seen if it can be helpful when performing computations.


\bibliographystyle{ytphys}
\bibliography{ref.bib}

\providecommand{\href}[2]{#2}\begingroup\raggedright\begin{thebibliography}{100}

\bibitem{Schafer-Nameki:2023jdn}
S.~Schafer-Nameki, ``{ICTP lectures on (non-)invertible generalized symmetries},'' \href{http://dx.doi.org/10.1016/j.physrep.2024.01.007}{{\em Phys. Rept.} {\bfseries 1063} (2024) 1--55}, \href{http://arxiv.org/abs/2305.18296}{{\ttfamily arXiv:2305.18296 [hep-th]}}.

\bibitem{Bhardwaj:2023kri}
L.~Bhardwaj, L.~E. Bottini, L.~Fraser-Taliente, L.~Gladden, D.~S.~W. Gould, A.~Platschorre, and H.~Tillim, ``{Lectures on generalized symmetries},'' \href{http://dx.doi.org/10.1016/j.physrep.2023.11.002}{{\em Phys. Rept.} {\bfseries 1051} (2024) 1--87}, \href{http://arxiv.org/abs/2307.07547}{{\ttfamily arXiv:2307.07547 [hep-th]}}.

\bibitem{Shao:2023gho}
S.-H. Shao, ``{What's Done Cannot Be Undone: TASI Lectures on Non-Invertible Symmetries},'' \href{http://arxiv.org/abs/2308.00747}{{\ttfamily arXiv:2308.00747 [hep-th]}}.

\bibitem{Brennan:2023mmt}
T.~D. Brennan and S.~Hong, ``{Introduction to Generalized Global Symmetries in QFT and Particle Physics},'' \href{http://arxiv.org/abs/2306.00912}{{\ttfamily arXiv:2306.00912 [hep-ph]}}.

\bibitem{Luo:2023ive}
R.~Luo, Q.-R. Wang, and Y.-N. Wang, ``{Lecture notes on generalized symmetries and applications},'' \href{http://dx.doi.org/10.1016/j.physrep.2024.02.002}{{\em Phys. Rept.} {\bfseries 1065} (2024) 1--43}, \href{http://arxiv.org/abs/2307.09215}{{\ttfamily arXiv:2307.09215 [hep-th]}}.

\bibitem{Gomes:2023ahz}
P.~R.~S. Gomes, ``{An introduction to higher-form symmetries},'' \href{http://dx.doi.org/10.21468/SciPostPhysLectNotes.74}{{\em SciPost Phys. Lect. Notes} {\bfseries 74} (2023) 1}, \href{http://arxiv.org/abs/2303.01817}{{\ttfamily arXiv:2303.01817 [hep-th]}}.

\bibitem{Bhardwaj:2017xup}
L.~Bhardwaj and Y.~Tachikawa, ``{On finite symmetries and their gauging in two dimensions},'' \href{http://dx.doi.org/10.1007/JHEP03(2018)189}{{\em JHEP} {\bfseries 03} (2018) 189}, \href{http://arxiv.org/abs/1704.02330}{{\ttfamily arXiv:1704.02330 [hep-th]}}.

\bibitem{Chang:2018iay}
C.-M. Chang, Y.-H. Lin, S.-H. Shao, Y.~Wang, and X.~Yin, ``{Topological Defect Lines and Renormalization Group Flows in Two Dimensions},'' \href{http://dx.doi.org/10.1007/JHEP01(2019)026}{{\em JHEP} {\bfseries 01} (2019) 026}, \href{http://arxiv.org/abs/1802.04445}{{\ttfamily arXiv:1802.04445 [hep-th]}}.

\bibitem{Thorngren:2019iar}
R.~Thorngren and Y.~Wang, ``{Fusion category symmetry. Part I. Anomaly in-flow and gapped phases},'' \href{http://dx.doi.org/10.1007/JHEP04(2024)132}{{\em JHEP} {\bfseries 04} (2024) 132}, \href{http://arxiv.org/abs/1912.02817}{{\ttfamily arXiv:1912.02817 [hep-th]}}.

\bibitem{Yu:2020twi}
M.~Yu, ``{Symmetries and anomalies of (1+1)d theories: 2-groups and symmetry fractionalization},'' \href{http://dx.doi.org/10.1007/JHEP08(2021)061}{{\em JHEP} {\bfseries 08} (2021) 061}, \href{http://arxiv.org/abs/2010.01136}{{\ttfamily arXiv:2010.01136 [hep-th]}}.

\bibitem{Thorngren:2021yso}
R.~Thorngren and Y.~Wang, ``{Fusion Category Symmetry II: Categoriosities at $c$ = 1 and Beyond},'' \href{http://arxiv.org/abs/2106.12577}{{\ttfamily arXiv:2106.12577 [hep-th]}}.

\bibitem{inamura2021topological}
K.~Inamura, ``Topological field theories and symmetry protected topological phases with fusion category symmetries,'' {\em JHEP} {\bfseries 2021} no.~5, (2021) 1--35. \url{arXiv:2103.15588v1 [cond-mat.str-el]}.

\bibitem{Huang:2021zvu}
T.-C. Huang, Y.-H. Lin, and S.~Seifnashri, ``{Construction of two-dimensional topological field theories with non-invertible symmetries},'' \href{http://dx.doi.org/10.1007/JHEP12(2021)028}{{\em JHEP} {\bfseries 12} (2021) 028}, \href{http://arxiv.org/abs/2110.02958}{{\ttfamily arXiv:2110.02958 [hep-th]}}.

\bibitem{Inamura:2022lun}
K.~Inamura, ``{Fermionization of fusion category symmetries in 1+1 dimensions},'' \href{http://dx.doi.org/10.1007/JHEP10(2023)101}{{\em JHEP} {\bfseries 10} (2023) 101}, \href{http://arxiv.org/abs/2206.13159}{{\ttfamily arXiv:2206.13159 [cond-mat.str-el]}}.

\bibitem{Chatterjee:2022jll}
A.~Chatterjee, W.~Ji, and X.-G. Wen, ``{Emergent generalized symmetry and maximal symmetry-topological-order},'' \href{http://arxiv.org/abs/2212.14432}{{\ttfamily arXiv:2212.14432 [cond-mat.str-el]}}.

\bibitem{Zhang:2023wlu}
C.~Zhang and C.~C\'ordova, ``{Anomalies of $(1+1)D$ categorical symmetries},'' \href{http://arxiv.org/abs/2304.01262}{{\ttfamily arXiv:2304.01262 [cond-mat.str-el]}}.

\bibitem{Huang:2023pyk}
S.-J. Huang and M.~Cheng, ``{Topological holography, quantum criticality, and boundary states},'' \href{http://arxiv.org/abs/2310.16878}{{\ttfamily arXiv:2310.16878 [cond-mat.str-el]}}.

\bibitem{Bhardwaj:2023fca}
L.~Bhardwaj, L.~E. Bottini, D.~Pajer, and S.~Schafer-Nameki, ``{Categorical Landau Paradigm for Gapped Phases},'' \href{http://arxiv.org/abs/2310.03786}{{\ttfamily arXiv:2310.03786 [cond-mat.str-el]}}.

\bibitem{Bhardwaj:2023idu}
L.~Bhardwaj, L.~E. Bottini, D.~Pajer, and S.~Sch\"afer-Nameki, ``{Gapped Phases with Non-Invertible Symmetries: (1+1)d},'' \href{http://arxiv.org/abs/2310.03784}{{\ttfamily arXiv:2310.03784 [hep-th]}}.

\bibitem{Bhardwaj:2023bbf}
L.~Bhardwaj, L.~E. Bottini, D.~Pajer, and S.~Schafer-Nameki, ``{The Club Sandwich: Gapless Phases and Phase Transitions with Non-Invertible Symmetries},'' \href{http://arxiv.org/abs/2312.17322}{{\ttfamily arXiv:2312.17322 [hep-th]}}.

\bibitem{Bhardwaj:2024qrf}
L.~Bhardwaj, D.~Pajer, S.~Schafer-Nameki, and A.~Warman, ``{Hasse Diagrams for Gapless SPT and SSB Phases with Non-Invertible Symmetries},'' \href{http://arxiv.org/abs/2403.00905}{{\ttfamily arXiv:2403.00905 [cond-mat.str-el]}}.

\bibitem{Bhardwaj:2024kvy}
L.~Bhardwaj, L.~E. Bottini, S.~Schafer-Nameki, and A.~Tiwari, ``{Lattice Models for Phases and Transitions with Non-Invertible Symmetries},'' \href{http://arxiv.org/abs/2405.05964}{{\ttfamily arXiv:2405.05964 [cond-mat.str-el]}}.

\bibitem{Bhardwaj:2024ydc}
L.~Bhardwaj, K.~Inamura, and A.~Tiwari, ``{Fermionic Non-Invertible Symmetries in (1+1)d: Gapped and Gapless Phases, Transitions, and Symmetry TFTs},'' \href{http://arxiv.org/abs/2405.09754}{{\ttfamily arXiv:2405.09754 [hep-th]}}.

\bibitem{Bhardwaj:2024wlr}
L.~Bhardwaj, L.~E. Bottini, S.~Schafer-Nameki, and A.~Tiwari, ``{Illustrating the Categorical Landau Paradigm in Lattice Models},'' \href{http://arxiv.org/abs/2405.05302}{{\ttfamily arXiv:2405.05302 [cond-mat.str-el]}}.

\bibitem{Chatterjee:2024ych}
A.~Chatterjee, O.~M. Aksoy, and X.-G. Wen, ``{Quantum Phases and Transitions in Spin Chains with Non-Invertible Symmetries},'' \href{http://arxiv.org/abs/2405.05331}{{\ttfamily arXiv:2405.05331 [cond-mat.str-el]}}.

\bibitem{Roumpedakis:2022aik}
K.~Roumpedakis, S.~Seifnashri, and S.-H. Shao, ``{Higher Gauging and Non-invertible Condensation Defects},'' \href{http://dx.doi.org/10.1007/s00220-023-04706-9}{{\em Commun. Math. Phys.} {\bfseries 401} no.~3, (2023) 3043--3107}, \href{http://arxiv.org/abs/2204.02407}{{\ttfamily arXiv:2204.02407 [hep-th]}}.

\bibitem{Bhardwaj:2022yxj}
L.~Bhardwaj, L.~E. Bottini, S.~Schafer-Nameki, and A.~Tiwari, ``{Non-invertible higher-categorical symmetries},'' \href{http://dx.doi.org/10.21468/SciPostPhys.14.1.007}{{\em SciPost Phys.} {\bfseries 14} no.~1, (2023) 007}, \href{http://arxiv.org/abs/2204.06564}{{\ttfamily arXiv:2204.06564 [hep-th]}}.

\bibitem{Bhardwaj:2022lsg}
L.~Bhardwaj, S.~Schafer-Nameki, and J.~Wu, ``{Universal Non-Invertible Symmetries},'' \href{http://dx.doi.org/10.1002/prop.202200143}{{\em Fortsch. Phys.} {\bfseries 70} no.~11, (2022) 2200143}, \href{http://arxiv.org/abs/2208.05973}{{\ttfamily arXiv:2208.05973 [hep-th]}}.

\bibitem{Bartsch:2022mpm}
T.~Bartsch, M.~Bullimore, A.~E.~V. Ferrari, and J.~Pearson, ``{Non-invertible Symmetries and Higher Representation Theory I},'' \href{http://arxiv.org/abs/2208.05993}{{\ttfamily arXiv:2208.05993 [hep-th]}}.

\bibitem{Bhardwaj:2022kot}
L.~Bhardwaj, S.~Schafer-Nameki, and A.~Tiwari, ``{Unifying constructions of non-invertible symmetries},'' \href{http://dx.doi.org/10.21468/SciPostPhys.15.3.122}{{\em SciPost Phys.} {\bfseries 15} no.~3, (2023) 122}, \href{http://arxiv.org/abs/2212.06159}{{\ttfamily arXiv:2212.06159 [hep-th]}}.

\bibitem{Bhardwaj:2022maz}
L.~Bhardwaj, L.~E. Bottini, S.~Schafer-Nameki, and A.~Tiwari, ``{Non-invertible symmetry webs},'' \href{http://dx.doi.org/10.21468/SciPostPhys.15.4.160}{{\em SciPost Phys.} {\bfseries 15} no.~4, (2023) 160}, \href{http://arxiv.org/abs/2212.06842}{{\ttfamily arXiv:2212.06842 [hep-th]}}.

\bibitem{Bartsch:2022ytj}
T.~Bartsch, M.~Bullimore, A.~E.~V. Ferrari, and J.~Pearson, ``{Non-invertible Symmetries and Higher Representation Theory II},'' \href{http://arxiv.org/abs/2212.07393}{{\ttfamily arXiv:2212.07393 [hep-th]}}.

\bibitem{Delcamp:2023kew}
C.~Delcamp and A.~Tiwari, ``{Higher categorical symmetries and gauging in two-dimensional spin systems},'' \href{http://dx.doi.org/10.21468/SciPostPhys.16.4.110}{{\em SciPost Phys.} {\bfseries 16} (2024) 110}, \href{http://arxiv.org/abs/2301.01259}{{\ttfamily arXiv:2301.01259 [hep-th]}}.

\bibitem{Inamura:2023qzl}
K.~Inamura and K.~Ohmori, ``{Fusion Surface Models: 2+1d Lattice Models from Fusion 2-Categories},'' \href{http://dx.doi.org/10.21468/SciPostPhys.16.6.143}{{\em SciPost Phys.} {\bfseries 16} (2024) 143}, \href{http://arxiv.org/abs/2305.05774}{{\ttfamily arXiv:2305.05774 [cond-mat.str-el]}}.

\bibitem{DY23}
T.~D. D\'ecoppet and M.~Yu, ``{Fiber 2-Functors and Tambara-Yamagami Fusion 2-Categories},'' {\em Commun. Math. Phys.} {\bfseries 113} no.~406, (2025) 64. \url{arXiv:2306.08117 [math.CT]}.

\bibitem{Choi:2024rjm}
Y.~Choi, Y.~Sanghavi, S.-H. Shao, and Y.~Zheng, ``{Non-invertible and higher-form symmetries in 2+1d lattice gauge theories},'' \href{http://arxiv.org/abs/2405.13105}{{\ttfamily arXiv:2405.13105 [cond-mat.str-el]}}.

\bibitem{Bhardwaj:2024qiv}
L.~Bhardwaj, D.~Pajer, S.~Schafer-Nameki, A.~Tiwari, A.~Warman, and J.~Wu, ``{Gapped Phases in (2+1)d with Non-Invertible Symmetries: Part I},'' \href{http://arxiv.org/abs/2408.05266}{{\ttfamily arXiv:2408.05266 [hep-th]}}.

\bibitem{lan2018classification}
T.~Lan, L.~Kong, and X.-G. Wen, ``Classification of (3+1)d bosonic topological orders: the case when pointlike excitations are all bosons,'' {\em Phys. Rev. X} {\bfseries 8} no.~2, (2018) 021074. \url{arXiv:1704.04221v2 [cond-mat.str-el]}.

\bibitem{Kong:2020wmn}
L.~Kong, Y.~Tian, and Z.-H. Zhang, ``{Defects in the 3-dimensional toric code model form a braided fusion 2-category},'' \href{http://dx.doi.org/10.1007/JHEP12(2020)078}{{\em JHEP} {\bfseries 12} (2020) 078}, \href{http://arxiv.org/abs/2009.06564}{{\ttfamily arXiv:2009.06564 [cond-mat.str-el]}}.

\bibitem{KW:2020}
L.~Kong, T.~Lan, X.-G. Wen, Z.-H. Zhang, and H.~Zheng, ``{Algebraic higher symmetry and categorical symmetry -- a holographic and entanglement view of symmetry},'' \href{http://dx.doi.org/10.1103/PhysRevResearch.2.043086}{{\em Phys. Rev. Res.} {\bfseries 2} no.~4, (2020) 043086}, \href{http://arxiv.org/abs/2005.14178}{{\ttfamily arXiv:2005.14178 [cond-mat.str-el]}}.

\bibitem{KW2:2020jne}
L.~Kong, T.~Lan, X.-G. Wen, Z.-H. Zhang, and H.~Zheng, ``{Classification of topological phases with finite internal symmetries in all dimensions},'' \href{http://dx.doi.org/10.1007/JHEP09(2020)093}{{\em JHEP} {\bfseries 09} (2020) 093}, \href{http://arxiv.org/abs/2003.08898}{{\ttfamily arXiv:2003.08898 [math-ph]}}.

\bibitem{Barkeshli:2022wuz}
M.~Barkeshli, Y.-A. Chen, S.-J. Huang, R.~Kobayashi, N.~Tantivasadakarn, and G.~Zhu, ``{Codimension-2 defects and higher symmetries in (3+1)D topological phases},'' \href{http://dx.doi.org/10.21468/SciPostPhys.14.4.065}{{\em SciPost Phys.} {\bfseries 14} no.~4, (2023) 065}, \href{http://arxiv.org/abs/2208.07367}{{\ttfamily arXiv:2208.07367 [cond-mat.str-el]}}.

\bibitem{Zhao:2022yaw}
J.~Zhao, J.-Q. Lou, Z.-H. Zhang, L.-Y. Hung, L.~Kong, and Y.~Tian, ``{String condensations in $3+1D$ and Lagrangian algebras},'' \href{http://dx.doi.org/10.4310/ATMP.2023.v27.n2.a5}{{\em Adv. Theor. Math. Phys.} {\bfseries 27} no.~2, (2023) 583--622}, \href{http://arxiv.org/abs/2208.07865}{{\ttfamily arXiv:2208.07865 [cond-mat.str-el]}}.

\bibitem{Choi:2021kmx}
Y.~Choi, C.~Cordova, P.-S. Hsin, H.~T. Lam, and S.-H. Shao, ``{Noninvertible duality defects in 3+1 dimensions},'' \href{http://dx.doi.org/10.1103/PhysRevD.105.125016}{{\em Phys. Rev. D} {\bfseries 105} no.~12, (2022) 125016}, \href{http://arxiv.org/abs/2111.01139}{{\ttfamily arXiv:2111.01139 [hep-th]}}.

\bibitem{Kaidi:2021xfk}
J.~Kaidi, K.~Ohmori, and Y.~Zheng, ``{Kramers-Wannier-like Duality Defects in (3+1)D Gauge Theories},'' \href{http://dx.doi.org/10.1103/PhysRevLett.128.111601}{{\em Phys. Rev. Lett.} {\bfseries 128} no.~11, (2022) 111601}, \href{http://arxiv.org/abs/2111.01141}{{\ttfamily arXiv:2111.01141 [hep-th]}}.

\bibitem{Choi:2022zal}
Y.~Choi, C.~Cordova, P.-S. Hsin, H.~T. Lam, and S.-H. Shao, ``{Non-invertible Condensation, Duality, and Triality Defects in 3+1 Dimensions},'' \href{http://dx.doi.org/10.1007/s00220-023-04727-4}{{\em Commun. Math. Phys.} {\bfseries 402} no.~1, (2023) 489--542}, \href{http://arxiv.org/abs/2204.09025}{{\ttfamily arXiv:2204.09025 [hep-th]}}.

\bibitem{Antinucci:2023ezl}
A.~Antinucci, F.~Benini, C.~Copetti, G.~Galati, and G.~Rizi, ``{Anomalies of non-invertible self-duality symmetries: fractionalization and gauging},'' \href{http://arxiv.org/abs/2308.11707}{{\ttfamily arXiv:2308.11707 [hep-th]}}.

\bibitem{Cordova:2023bja}
C.~Cordova, P.-S. Hsin, and C.~Zhang, ``{Anomalies of Non-Invertible Symmetries in (3+1)d},'' \href{http://arxiv.org/abs/2308.11706}{{\ttfamily arXiv:2308.11706 [hep-th]}}.

\bibitem{Antinucci:2024ltv}
A.~Antinucci, C.~Copetti, and S.~Schafer-Nameki, ``{SymTFT for (3+1)d Gapless SPTs and Obstructions to Confinement},'' \href{http://arxiv.org/abs/2408.05585}{{\ttfamily arXiv:2408.05585 [hep-th]}}.

\bibitem{Koide:2021zxj}
M.~Koide, Y.~Nagoya, and S.~Yamaguchi, ``{Non-invertible topological defects in 4-dimensional $\mathbb {Z}_2$ pure lattice gauge theory},'' \href{http://dx.doi.org/10.1093/ptep/ptab145}{{\em PTEP} {\bfseries 2022} no.~1, (2022) 013B03}, \href{http://arxiv.org/abs/2109.05992}{{\ttfamily arXiv:2109.05992 [hep-th]}}.

\bibitem{Gorantla:2024ocs}
P.~Gorantla, S.-H. Shao, and N.~Tantivasadakarn, ``{Tensor networks for non-invertible symmetries in 3+1d and beyond},'' \href{http://arxiv.org/abs/2406.12978}{{\ttfamily arXiv:2406.12978 [quant-ph]}}.

\bibitem{ENO2}
P.~Etingof, D.~Nikshych, and V.~Ostrik, ``Fusion categories and homotopy theory,'' {\em Quantum Topology} {\bfseries 1} no.~3, (2009) . \url{ arXiv:0909.3140v2 [math.QA]}.

\bibitem{TY}
D.~Tambara and S.~Yamagami, ``Tensor categories with fusion rules of self-duality for finite abelian groups,'' {\em Journal of Algebra} {\bfseries 209} (1998) 692--707.

\bibitem{Kaidi:2022cpf}
J.~Kaidi, K.~Ohmori, and Y.~Zheng, ``{Symmetry TFTs for Non-invertible Defects},'' \href{http://dx.doi.org/10.1007/s00220-023-04859-7}{{\em Commun. Math. Phys.} {\bfseries 404} no.~2, (2023) 1021--1124}, \href{http://arxiv.org/abs/2209.11062}{{\ttfamily arXiv:2209.11062 [hep-th]}}.

\bibitem{Bashmakov:2022jtl}
V.~Bashmakov, M.~Del~Zotto, and A.~Hasan, ``{On the 6d origin of non-invertible symmetries in 4d},'' \href{http://dx.doi.org/10.1007/JHEP09(2023)161}{{\em JHEP} {\bfseries 09} (2023) 161}, \href{http://arxiv.org/abs/2206.07073}{{\ttfamily arXiv:2206.07073 [hep-th]}}.

\bibitem{KZZ:2022uux}
J.~Kaidi, G.~Zafrir, and Y.~Zheng, ``{Non-invertible symmetries of $ \mathcal{N} $ = 4 SYM and twisted compactification},'' \href{http://dx.doi.org/10.1007/JHEP08(2022)053}{{\em JHEP} {\bfseries 08} (2022) 053}, \href{http://arxiv.org/abs/2205.01104}{{\ttfamily arXiv:2205.01104 [hep-th]}}.

\bibitem{Antinucci:2022cdi}
A.~Antinucci, C.~Copetti, G.~Galati, and G.~Rizi, ``{\textquotedblleft{}Zoology\textquotedblright{} of non-invertible duality defects: the view from class $ \mathcal{S} $},'' \href{http://dx.doi.org/10.1007/JHEP04(2024)036}{{\em JHEP} {\bfseries 04} (2024) 036}, \href{http://arxiv.org/abs/2212.09549}{{\ttfamily arXiv:2212.09549 [hep-th]}}.

\bibitem{Lawrie:2023tdz}
C.~Lawrie, X.~Yu, and H.~Y. Zhang, ``{Intermediate defect groups, polarization pairs, and noninvertible duality defects},'' \href{http://dx.doi.org/10.1103/PhysRevD.109.026005}{{\em Phys. Rev. D} {\bfseries 109} no.~2, (2024) 026005}, \href{http://arxiv.org/abs/2306.11783}{{\ttfamily arXiv:2306.11783 [hep-th]}}.

\bibitem{Sela:2024okz}
O.~Sela, ``{Emergent Noninvertible Symmetries in N=4 Supersymmetric Yang-Mills Theory},'' \href{http://dx.doi.org/10.1103/PhysRevLett.132.201601}{{\em Phys. Rev. Lett.} {\bfseries 132} no.~20, (2024) 201601}, \href{http://arxiv.org/abs/2401.05032}{{\ttfamily arXiv:2401.05032 [hep-th]}}.

\bibitem{Barkeshli:2014cna}
M.~Barkeshli, P.~Bonderson, M.~Cheng, and Z.~Wang, ``{Symmetry Fractionalization, Defects, and Gauging of Topological Phases},'' \href{http://dx.doi.org/10.1103/PhysRevB.100.115147}{{\em Phys. Rev. B} {\bfseries 100} no.~11, (2019) 115147}, \href{http://arxiv.org/abs/1410.4540}{{\ttfamily arXiv:1410.4540 [cond-mat.str-el]}}.

\bibitem{Cheng:2022nds}
M.~Cheng and C.-M. Jian, ``{Gauging U(1) symmetry in (2+1)d topological phases},'' \href{http://dx.doi.org/10.21468/SciPostPhys.12.6.202}{{\em SciPost Phys.} {\bfseries 12} no.~6, (2022) 202}, \href{http://arxiv.org/abs/2201.07239}{{\ttfamily arXiv:2201.07239 [cond-mat.str-el]}}.

\bibitem{Ji:2019jhk}
W.~Ji and X.-G. Wen, ``{Categorical symmetry and noninvertible anomaly in symmetry-breaking and topological phase transitions},'' \href{http://dx.doi.org/10.1103/PhysRevResearch.2.033417}{{\em Phys. Rev. Res.} {\bfseries 2} no.~3, (2020) 033417}, \href{http://arxiv.org/abs/1912.13492}{{\ttfamily arXiv:1912.13492 [cond-mat.str-el]}}.

\bibitem{Apruzzi:2021nmk}
F.~Apruzzi, F.~Bonetti, I.~n. Garc\'\i{}a~Etxebarria, S.~S. Hosseini, and S.~Schafer-Nameki, ``{Symmetry TFTs from String Theory},'' \href{http://dx.doi.org/10.1007/s00220-023-04737-2}{{\em Commun. Math. Phys.} {\bfseries 402} no.~1, (2023) 895--949}, \href{http://arxiv.org/abs/2112.02092}{{\ttfamily arXiv:2112.02092 [hep-th]}}.

\bibitem{Freed:2022qnc}
D.~S. Freed, G.~W. Moore, and C.~Teleman, ``{Topological symmetry in quantum field theory},'' \href{http://arxiv.org/abs/2209.07471}{{\ttfamily arXiv:2209.07471 [hep-th]}}.

\bibitem{Bhardwaj:2023ayw}
L.~Bhardwaj and S.~Schafer-Nameki, ``{Generalized Charges, Part II: Non-Invertible Symmetries and the Symmetry TFT},'' \href{http://arxiv.org/abs/2305.17159}{{\ttfamily arXiv:2305.17159 [hep-th]}}.

\bibitem{Lin:2022dhv}
Y.-H. Lin, M.~Okada, S.~Seifnashri, and Y.~Tachikawa, ``{Asymptotic density of states in 2d CFTs with non-invertible symmetries},'' \href{http://dx.doi.org/10.1007/JHEP03(2023)094}{{\em JHEP} {\bfseries 03} (2023) 094}, \href{http://arxiv.org/abs/2208.05495}{{\ttfamily arXiv:2208.05495 [hep-th]}}.

\bibitem{Bhardwaj:2023wzd}
L.~Bhardwaj and S.~Schafer-Nameki, ``{Generalized charges, part I: Invertible symmetries and higher representations},'' \href{http://dx.doi.org/10.21468/SciPostPhys.16.4.093}{{\em SciPost Phys.} {\bfseries 16} no.~4, (2024) 093}, \href{http://arxiv.org/abs/2304.02660}{{\ttfamily arXiv:2304.02660 [hep-th]}}.

\bibitem{Bartsch:2023pzl}
T.~Bartsch, M.~Bullimore, and A.~Grigoletto, ``{Higher representations for extended operators},'' \href{http://arxiv.org/abs/2304.03789}{{\ttfamily arXiv:2304.03789 [hep-th]}}.

\bibitem{D11}
T.~D. D{\'e}coppet, ``Extension theory and fermionic strongly fusion 2-categories ({W}ith an appendix joint with {T}heo {J}ohnson-{F}reyd),'' {\em SIGMA} {\bfseries 20} (2024) 092. arXiv:2403.03211.

\bibitem{JFR3}
T.~Johnson-Freyd and D.~J. Reutter, ``Super-duper vector spaces {I},'' 2023.
\newblock \url{https://homepages.uni-regensburg.de/~lum63364/ConferenceFFT/Reutter.pdf}.

\bibitem{JFY2}
T.~Johnson-Freyd and M.~Yu, ``Topological orders in (4+1)-dimensions,'' {\em SciPost Phys.} {\bfseries 13} (2022) 068. \url{ arXiv:2104.04534v2 [hep-th]}.

\bibitem{DMNO}
A.~Davydov, M.~M{\"u}ger, D.~Nikshych, and V.~Ostrik, ``The {W}itt group of non-degenerate braided fusion categories,'' {\em J. f{\"u}r die Reine und Angew. Math.} {\bfseries 2013} no.~667, (2013) . \url{ arXiv:1009.2117v2 [math.QA]}.

\bibitem{BJSS}
A.~Brochier, D.~Jordan, P.~Safronov, and N.~Snyder, ``Invertible braided tensor categories,'' {\em Algebr. Geom. Topol.} {\bfseries 12} (2021) 2107--2140. \url{ arXiv:2003.13812v1 [math.QA]}.

\bibitem{GNN}
S.~Gelaki, D.~Naidu, and D.~Nikshych, ``Centers of graded fusion categories,'' {\em Algebra Number Theory} {\bfseries 3} no.~8, (2009) 959--990. \url{ arXiv:0905.3117v1 [math.QA]}.

\bibitem{Izumi:2001mi}
M.~Izumi, ``{The structure of sectors associated with Longo-Rehren inclusions. II: Examples},'' \href{http://dx.doi.org/10.1142/S0129055X01000818}{{\em Rev. Math. Phys.} {\bfseries 13} (2001) 603--674}.

\bibitem{CGPW}
S.~X. Cui, C.~Galindo, J.~Y. Plavnik, and Z.~Wang, ``On gauging symmetry of modular categories,'' {\em Commun. Math. Phys.} {\bfseries 348} no.~3, (2016) 1043--1064. \url{arXiv:1510.03475v3 [math.QA] }.

\bibitem{D7}
T.~D. D{\'e}coppet, ``Rigid and separable algebras in fusion 2-categories,'' {\em Advances in Mathematics} {\bfseries 419} (2023) 108967. \url{ arXiv:2205.06453v4 [math.QA]}.

\bibitem{DY22}
T.~D. D\'ecoppet and M.~Yu, ``{Gauging noninvertible defects: a 2-categorical perspective},'' {\em Lett. Math. Phys.} {\bfseries 113} no.~2, (2023) 36. \url{arXiv:2211.08436 [math.CT]}.

\bibitem{baez1998higher}
J.~C. Baez and J.~Dolan, ``Higher-dimensional algebra iii. n-categories and the algebra of opetopes,'' {\em Advances in Mathematics} {\bfseries 135} no.~2, (1998) 145--206. \url{arXiv:q-alg/9702014v1}.

\bibitem{JF}
T.~Johnson-Freyd, ``On the classification of topological orders,'' {\em Commun. Math. Phys.} (2022) . \url{ arXiv:2003.06663v5 [math.CT]}.

\bibitem{GJF}
D.~Gaiotto and T.~Johnson-Freyd, ``Condensations in higher categories,'' 2019.
\newblock \url{ arXiv:1905.09566v3 [math.CT]}.

\bibitem{eilenberg1956dimension}
S.~Eilenberg and T.~Nakayama, ``On the dimension of modules and algebras, ii (frobenius algebras and quasi-frobenius rings),'' \href{https://doi.org/10.1017/S0027763000002635}{{\em Nagoya Mathematical Journal} {\bfseries 9} (1955) 1--16}. Received June 24, 1954.

\bibitem{ENO1}
P.~Etingof, D.~Nikshych, and V.~Ostrik, ``On fusion categories,'' {\em Annals of Mathematics} {\bfseries 162} (2005) 581--642. \url{ arXiv:math/0203060v11 [math.QA]}.

\bibitem{DSPS13}
C.~L. Douglas, C.~Schommer-Pries, and N.~Snyder, {\em Dualizable tensor categories}.
\newblock Mem. Amer. Math. Soc. AMS, 2021.
\newblock \url{arXiv:1312.7188v2 [math.QA]}.

\bibitem{EGNO}
P.~Etingof, S.~Gelaki, D.~Nikshych, and V.~Ostrik, {\em Tensor Categories}.
\newblock Mathematical Surveys and Monographs. AMS, 2015.

\bibitem{DR}
C.~L. Douglas and D.~J. Reutter, ``Fusion 2-categories and a state-sum invariant for 4-manifolds,'' 2018.
\newblock \url{arXiv:1812.11933 [math.QA]}.

\bibitem{D2}
T.~D. D{\'e}coppet, ``Weak fusion 2-categories,'' {\em Cahiers de Topologie et G{\'e}om{\'e}trie Diff{\'e}rentielle Cat{\'e}goriques} {\bfseries LXIII} no.~1, (2022) 3--24. \url{ arXiv:2103.15150v2 [math.QA]}.

\bibitem{D1}
T.~D. D{\'e}coppet, ``Multifusion categories and finite semisimple 2-categories,'' {\em Journal of Pure and Applied Algebra} {\bfseries 226} no.~8, (2022) 107029. \url{ arXiv:2012.15774v3 [math.QA]}.

\bibitem{D9}
T.~D. D{\'e}coppet, ``{D}rinfeld centers and {M}orita equivalence classes of fusion 2-categories,'' 2022.
\newblock Accepted for publication in \textit{Compositio Mathematica}. \url{arXiv:2211.04917[math.QA]}.

\bibitem{O1}
V.~Ostrik, ``Module categories, weak hopf algebras and modular invariants,'' {\em Transformation Groups} {\bfseries 8} (2003) 177–206. \url{arXiv:math/01111395[math.QA]}.

\bibitem{D8}
T.~D. D{\'e}coppet, ``The {M}orita theory of fusion 2-categories,'' {\em Higher Structures} {\bfseries 7} no.~1, (2023) 234--292. \url{arXiv:2208.08722[math.CT]}.

\bibitem{D4}
T.~D. D{\'e}coppet, ``Finite semisimple module 2-categories,'' {\em Selecta Mathematica} {\bfseries 31} (2025) 5. \url{arXiv:2107.11037[math.QA]}.

\bibitem{BD}
J.~Baez and J.~Dolan, ``Higher‐dimensional algebra and topological quantum field theory,'' {\em J. Math. Phys.} {\bfseries 36} no.~11, (1995) 6073–6105. \url{arXiv:q-alg/9503002}.

\bibitem{RS}
B.~Day and R.~Street, ``{Monoidal Bicategories and Hopf Algebroids},'' {\em Advances in Mathematics} {\bfseries 129} no.~1, (1997) 99--157.

\bibitem{SP}
C.~J. Schommer-Pries, {\em The Classification of Two-Dimensional Extended Topological Field Theories}.
\newblock PhD thesis, UC Berkeley, 2011.
\newblock \href{http://arxiv.org/abs/1112.1000}{{\ttfamily arXiv:1112.1000 [math.AT]}}.

\bibitem{JF4}
T.~Johnson-Freyd, ``Higher {$S$}-matrices,'' 2021.
\newblock Slides available at: \url{http://categorified.net/S-matrices.pdf}.

\bibitem{JFY1}
T.~Johnson-Freyd and M.~Yu, ``Fusion 2-categories with no line operators are grouplike,'' {\em Bull. Aust. Math. Soc.} {\bfseries 104} no.~3, (2021) 434--442. \url{arXiv:2010.07950[math.QA]}.

\bibitem{JFS}
T.~Johnson-Freyd and C.~Scheimbauer, ``({O}p)lax natural transformations, twisted quantum field theories, and {``}even higher{''} {M}orita categories,'' {\em Advances in Mathematics} {\bfseries 307} no.~2, (2017) 147–223. \url{arXiv:1502.06526v3[math.CT]}.

\bibitem{D10}
T.~D. D{\'e}coppet, ``On the dualizability of fusion 2-categories,'' 2023.
\newblock Accepted for publication in \textit{Quantum Topology}. \url{arXiv:2311.16827[math.CT]}.

\bibitem{BJS}
A.~Brochier, D.~Jordan, and N.~Snyder, ``On dualizability of braided tensor categories,'' {\em Compositio Mathematica} {\bfseries 3} (2021) 435--483. \url{arXiv:1804.07538[math.QA]}.

\bibitem{DNO}
A.~Davydov, D.~Nikshych, and V.~Ostrik, ``On the structure of the {W}itt group of braided fusion categories,'' {\em Selecta Mathematica} {\bfseries 19} (2013) 237–269. \url{arXiv:1109.5558[math.QA]}.

\bibitem{JFR}
T.~Johnson-Freyd and D.~J. Reutter, ``Minimal non-degenerate extensions,'' {\em Journ. Amer. Math. Soc.} (2023) . \url{arXiv:2105.15167[math.QA]}.

\bibitem{D3}
T.~D. D{\'e}coppet, ``The 2-{D}eligne tensor product,'' {\em Kyoto J. Math.} {\bfseries 64} no.~1, (2024) 1--29, \href{http://arxiv.org/abs/2103.16880}{{\ttfamily arXiv:2103.16880}}.

\bibitem{Copetti:2023mcq}
C.~Copetti, M.~Del~Zotto, K.~Ohmori, and Y.~Wang, ``{Higher Structure of Chiral Symmetry},'' \href{http://arxiv.org/abs/2305.18282}{{\ttfamily arXiv:2305.18282 [hep-th]}}.

\bibitem{Barkeshli:2016mew}
M.~Barkeshli, P.~Bonderson, C.-M. Jian, M.~Cheng, and K.~Walker, ``{Reflection and time reversal symmetry enriched topological phases of matter: path integrals, non-orientable manifolds, and anomalies},'' \href{http://dx.doi.org/10.1007/s00220-019-03475-8}{{\em Commun. Math. Phys.} {\bfseries 374} no.~2, (2019) 1021--1124}, \href{http://arxiv.org/abs/1612.07792}{{\ttfamily arXiv:1612.07792 [cond-mat.str-el]}}.

\bibitem{Barkeshli:2019vtb}
M.~Barkeshli and M.~Cheng, ``{Relative Anomalies in (2+1)D Symmetry Enriched Topological States},'' \href{http://dx.doi.org/10.21468/SciPostPhys.8.2.028}{{\em SciPost Phys.} {\bfseries 8} (2020) 028}, \href{http://arxiv.org/abs/1906.10691}{{\ttfamily arXiv:1906.10691 [cond-mat.str-el]}}.

\bibitem{JPR}
C.~Jones, D.~Penneys, and D.~Reutter, ``A 3-categorical perspective on {$G$}-crossed braided categories,'' {\em J. Lon. Math. Soc.} {\bfseries 107} no.~1, (2023) 333--406. \url{arXiv:2009.00405v1 [math.CT]}.

\bibitem{DGNO}
V.~Drinfeld, S.~Gelaki, D.~Nikshych, and V.~Ostrik, ``On braided fusion categories {I},'' {\em Selecta Mathematica} {\bfseries 16} (2010) 1–119, \href{http://arxiv.org/abs/0906.0620}{{\ttfamily arXiv:0906.0620 [math.QA]}}.

\bibitem{DN}
A.~Davydov and D.~Nikshych, ``Braided {P}icard groups and graded extensions of braided tensor categories,'' {\em Selecta Mathematica} {\bfseries 27} no.~65, (2021) . \url{arXiv:2006.08022[math.QA]}.

\bibitem{JF2}
T.~Johnson-Freyd, ``(3+1)d topological orders with only a $\mathds{Z}/2$-charged particle,'' in {\em Quantum Symmetries}, vol.~813, p.~175–210.
\newblock 2025.
\newblock \url{arXiv:2011.11165v1[math.QA]}.

\bibitem{Reu}
D.~Reutter, ``Private communication,'' 2023.

\bibitem{DHJFPPRNY}
T.~D. D\'ecoppet, P.~Huston, T.~Johnson-Freyd, D.~Nikshych, D.~Penneys, J.~Plavnik, D.~Reutter, and M.~Yu, ``{The Classification of Fusion 2-Categories},'' \href{http://arxiv.org/abs/2411.05907}{{\ttfamily arXiv:2411.05907 [math.CT]}}.

\end{thebibliography}\endgroup


\end{document}